\documentclass[11pt,a4paper]{article}

\usepackage[a4paper,margin=30mm]{geometry}
\usepackage[T1]{fontenc}
\usepackage[utf8]{inputenc}
\usepackage{mathpazo}
\usepackage{microtype}
\usepackage{amsmath,amssymb,amsthm,amscd,mathtools}
\usepackage{mathrsfs}
\usepackage{graphicx}
\usepackage{tikz}
\usetikzlibrary{arrows.meta,positioning,calc}
\usepackage{enumerate}
\usepackage{cite}
\usepackage{float}
\usepackage{tocloft}
\usepackage{xcolor}
\usepackage[all]{xy}
\usepackage{hyperref}
\hypersetup{
  colorlinks=true,
  linkcolor=blue!55!black,
  citecolor=blue!55!black,
  urlcolor=blue!55!black,
  pdftitle={Boundary-weighted barycenter spaces},
  pdfauthor={Mohameden Ahmedou, Sadok Kallel},
  pdfsubject={Topology of boundary-weighted barycenter spaces and an application to the resonant Neumann mean-field equation},
  pdfkeywords={boundary-weighted barycenters, formal barycenters, symmetric joins, Morse theory at infinity}
}
\allowdisplaybreaks[4]
\numberwithin{equation}{section}

\newtheorem{thm}{Theorem}[section]
\newtheorem{lem}[thm]{Lemma}
\newtheorem{prop}[thm]{Proposition}
\newtheorem{cor}[thm]{Corollary}
\theoremstyle{definition}
\newtheorem{defn}[thm]{Definition}
\newtheorem{exa}[thm]{Example}
\theoremstyle{remark}
\newtheorem{rem}[thm]{Remark}

\newcommand{\R}{\mathbb{R}}
\newcommand{\Z}{\mathbb{Z}}
\newcommand{\N}{\mathbb{N}}
\newcommand{\F}{\mathbb{F}}
\newcommand{\Q}{\mathbb{Q}}
\newcommand{\Si}{\Sigma}
\newcommand{\pa}{\partial}
\newcommand{\ch}{\chi}
\newcommand{\wt}{\widetilde}

\newcommand{\colim}{\operatorname*{colim}}
\newcommand{\supp}{\operatorname{supp}}
\newcommand{\BB}{\mathcal B}

\newcommand{\SP}{\operatorname{SP}}
\newcommand{\conn}{\operatorname{conn}}
\newcommand{\mfg}{\mathfrak g}

\title{\bfseries Boundary-weighted barycenter spaces}
\author{
Mohameden Ahmedou\thanks{Mathematisches Institut, Justus-Liebig-Universit\"at Giessen,
Arndtstrasse 2, 35392 Giessen, Germany. M. Ahmedou was supported in part by
DFG grant AH 156/2-1.}
\and
Sadok Kallel\thanks{American University of Sharjah, United Arab Emirates, and
Laboratoire Paul Painlev\'e, Universit\'e de Lille, France.}
}

\begin{document}
\maketitle

\begin{abstract}
We study boundary-weighted barycenter spaces, in which an interior support point has cost (or weight) two and a boundary support point has cost one.  These spaces are finite-dimensional topological models for the concentration patterns produced by noncompact boundary Euler--Lagrange functionals: an interior bubble carries twice the quantized mass of a boundary bubble, and very negative sublevels are therefore modeled by boundary-weighted rather than ordinary barycenters.  The paper gives a systematic algebraic-topological treatment of these spaces.  We construct the mixed strata, the closed-stratum poset, and the triangular boundary-weighted colimit filtration; compute the Euler characteristic; and prove a homology decomposition in terms of ordinary barycenter spaces of \(\partial M\) and \(M/\partial M\).  We then specialize the formulas to compact connected orientable surfaces with boundary and to hemispheres, obtaining explicit rational Betti polynomials and, for surfaces, the mod-two polynomials required by the mean-field application.  Finally, we explain how these polynomials measure the topology at infinity in the resonant Neumann mean-field equation on a compact surface with boundary.
\end{abstract}

\noindent\textbf{Keywords.} Boundary-weighted barycenters; formal barycenters; symmetric joins; colimits; Betti numbers; Euler characteristic; hemispheres; Morse inequalities at infinity.

\smallskip
\noindent\textbf{2020 Mathematics Subject Classification.} 55N10, 55R80, 58E05, 35J60.

\tableofcontents

\section{Introduction and main results}
\label{sec:intro}

The purpose of this paper is to isolate a topological object which appears naturally in boundary concentration problems and to compute it in a form usable in Morse theory at infinity.  The object is the \emph{boundary-weighted barycenter space} of a compact manifold with boundary.  Its defining rule is simple: a point in the interior of the manifold has weight, or cost, two, while a point on the boundary has cost one.  Thus a mixed configuration with \(p\) interior points and \(q\) boundary points belongs to the level \(l\) precisely when
\[
2p+q\le l.
\]
This convention is not artificial.  It is the topological form of the mass quantization in boundary elliptic problems.  For the Neumann mean-field equation on a surface, an interior bubble carries mass \(8\pi\), whereas a boundary bubble carries mass \(4\pi\).  In the boundary \(Q\)-curvature problem the analogous fourth-order quantization has the same doubling principle.  Boundary-weighted barycenters are therefore the spaces in which the limiting concentration profiles of such equations live.

Our basic building block is the ordinary barycenter space
\begin{equation}\label{baryspace}
\BB_n(X)=\left\{\sum_{i=1}^n t_i\delta_{x_i}: x_i\in X,
\ t_i\ge0,\ \sum_i t_i=1\right\}
\end{equation}
which is figuratively the union of all simplices spanned by at most \(n\) points of \(X\), with the usual quotient identifications obtained by deleting zero weights and merging coincident support points (see \S\ref{prelim}). On compact metric spaces, this is also identified with a subspace of
 the space of \textit{finitely supported probability measures} under the Lévy--Prokhorov topology \cite{DJ, KW}. Kallel and Karoui identified this construction with symmetric joins and, after one suspension, with reduced symmetric products \cite{KallelKaroui2011}.  Hence ordinary barycenters are governed by the same algebra as symmetric powers: Dold--Thom theory, symmetric products, joins, and functorial homology.
 
 The spaces studied in this paper are a variant of the spaces \eqref{baryspace} defined on a compact connected smooth manifold $M$ of dimension at least two with nonempty boundary. The space denoted $\BB_l^\pa (M)$ consists of formal probability measures on \(M\) whose effective support satisfies the ``cost constraint'' \(2p+q\le l\), where \(p\) is the number of interior support points and \(q\) the number of boundary support points.
 This is by construction a subspace of $\BB_{l}(M)$, and it is naturally filtered
\begin{equation}\label{filtration}
\BB_1^{\pa}(M) = \partial M\subset \BB_2^{\pa}(M)\subset\cdots \subset \BB_l^\pa (M)
\end{equation}

These variants are introduced for two reasons. First, they describe an actual topological jump.  The filtration \eqref{filtration}
is not merely a convenient indexing device.  It records how new concentration patterns appear when the allowed total quantized mass is increased.  
Secondly, boundary-weighted barycenters are the finite-dimensional models for very low sublevels of noncompact Euler--Lagrange functionals.  In variational problems with critical exponential growth, improved Moser--Trudinger or Adams inequalities imply that a function with very negative energy must concentrate its normalized density near a barycentric configuration.  Conversely, bubble test maps send such configurations into very negative sublevels.  At a resonant parameter \(\kappa\ge2\), this produces, in the Neumann mean-field problem treated in Section \ref{sec:variational-jump}, a homotopy equivalence of the form
\[
J^{-L}\simeq \BB_{\kappa-1}^{\pa}(M),\qquad L\gg1,
\]
On the other hand, large positive sublevels are contractible.  Consequently the relative topology of the variational pair is
\[
H_r(J^L,J^{-L};\F)\cong \widetilde H_{r-1}(\BB_{\kappa-1}^{\pa}(M);\F),
\]
and the shifted reduced Poincar\'e polynomial of \(\BB_{\kappa-1}^{\pa}(M)\) is exactly the topological term entering the strong Morse relations.
This is the point at which topology meets analysis. 

In a compact variational problem, the change in sublevel topology is accounted for by genuine critical points.  In the boundary problems motivating this paper, the Palais--Smale condition fails at resonance.  Noncompact negative-gradient orbits may escape by bubbling, and their limiting profiles are critical points at infinity in the sense of Bahri \cite{Bahri1989,BahriCoron1988,BahriCoron1991}.  The barycenter space measures the homological demand imposed by the bottom sublevel, while the genuine critical points and the critical points at infinity provide the Morse generators.  Thus the algebraic-topological computation of \(\BB_l^{\pa}(M)\) is precisely what is needed to write the topology jump at infinity as an explicit Morse relation.
The same mechanism occurs in several geometric and analytic problems.  In the resonant boundary \(Q\)-curvature problem of Ahmedou--Kallel--Ndiaye, the very negative sublevels of the Paneitz--Chang--Qing functional are modeled by boundary-weighted barycenters, and the corresponding critical points at infinity contribute to the Morse theory of the problem \cite{AKN2016}.  Weighted barycenter spaces also occur in singular Liouville equations and in Toda systems, where the weights encode the allowed concentration masses of singular sources or of several components \cite{CarlottoMalchiodi2012,malchiodi,JKM2015,Kallel2019}.  

This paper can be used both as a standalone computation of boundary-weighted barycenters and as a reference for PDE applications in which these spaces provide the required barycentric topology at infinity. 
Our first result on connectivity and Euler characteristic $\chi$ is obtained by combining Corollary \ref{prop:connectivity} and Lemma \ref{lem:Bpq-euler} from the text.

\begin{thm}\label{thm:intro-euler}
Assume that both \(M\) and \(\pa M\) are \(r\)-connected, $r\geq 1$. Then 
\begin{equation}\label{eq:connectivity-even}
\BB_l^{\pa}(M)\text{ is at least}\ \begin{cases}(l+r-2)\text{-connected}& \hbox{if $l$ even}\\
(l+r-1)\text{-connected}& \hbox{if $l$ odd}
\end{cases}
\end{equation} 
The space $\BB_l^\pa (M)$ is simply connected as soon as $l\ge 2$.

Moreover, when $M$ is even-dimensional with nonempty boundary and \(l\ge1\),
\begin{equation}\label{eq:intro-euler}
\ch(\BB_{2l}^{\pa}(M))=\ch(\BB_l(M))
\ \hbox{and}\ 
\ch(\BB_{2l-1}^{\pa}(M))=\ch(\BB_{l-1}(M)).
\end{equation}
In particular, if \(\ch(M)=0\), then \(\ch(\BB_l^{\pa}(M))=0\) for every \(l\ge1\).
\end{thm}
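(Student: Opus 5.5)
The Euler characteristic identity \eqref{eq:intro-euler} is the cleanest part, so I would prove it first by a stratification and compactly supported Euler characteristics; the connectivity statement I would then deduce from the filtration \eqref{filtration}, which is where Corollary~\ref{prop:connectivity} enters.

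\emph{Euler characteristic.} Write \(\BB_l^{\pa}(M)\) as a disjoint union of locally closed mixed strata \(S_{p,q}\), indexed by \(p,q\ge0\) with \(2p+q\le l\) and \(p+q\ge1\). The stratum \(S_{p,q}\) consists of measures with exactly \(p\) interior and \(q\) boundary support points, all weights positive. It is a bundle over \(C_p(\mathrm{int}\,M)\times C_q(\pa M)\), where \(C_k\) denotes unordered configuration spaces, with fibre an open simplex of dimension \(p+q-1\). Since \(\BB_l^{\pa}(M)\) is a compact finite CW complex, \(\ch=\ch_c\), and \(\ch_c\) is additive over such a stratification and multiplicative on these bundles. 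I would then use three inputs:
\begin{itemize}
\item \(\ch_c(C_k(Y))=\binom{\ch_c(Y)}{k}\);
\item \(\ch_c(\pa M)=\ch(\pa M)=0\), because \(\pa M\) is a closed odd-dimensional manifold;
\item \(\ch_c(\mathrm{int}\,M)=\ch(M)-\ch(\pa M)=\ch(M)\).
\end{itemize}
Hence every stratum with \(q\ge1\) contributes \(\binom{0}{q}=0\). What remains is
\[
\ch(\BB_l^{\pa}(M))=\sum_{p=1}^{\lfloor l/2\rfloor}(-1)^{p-1}\binom{\ch(M)}{p}.
\]
The identical stratification of \(\BB_n(M)\) gives the same sum with upper limit \(n\), since \(\ch_c(M)=\ch(M)\). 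Taking \(n=\lfloor l/2\rfloor\), which equals \(l\) for the level \(2l\) and \(l-1\) for the level \(2l-1\), yields both identities in \eqref{eq:intro-euler}, with the convention \(\ch(\BB_0)=0\). If \(\ch(M)=0\), every binomial coefficient vanishes, so \(\ch(\BB_l^{\pa}(M))=0\). This is the content of Lemma~\ref{lem:Bpq-euler}.

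\emph{Connectivity.} I would induct along the filtration \eqref{filtration} by estimating the connectivity of each pair \((\BB_l^{\pa}(M),\BB_{l-1}^{\pa}(M))\). The quotient \(\BB_l^{\pa}(M)/\BB_{l-1}^{\pa}(M)\) is assembled from the closures of the top-cost strata, those with \(2p+q=l\). Each such piece is a quotient of
\[
\Delta^{p+q-1}\times \SP^p(M)\times\SP^q(\pa M)
\]
in which everything of lower cost is collapsed. This collapsed set includes the simplex faces and the configurations in which an interior point reaches the boundary, so that \(M\) is effectively replaced by \(M/\pa M\). The piece is therefore a quotient of a smash-type space \(S^{p+q-1}\wedge(\cdots)\), whose connectivity grows by at least \(r+1\) for each support point plus \(p+q-1\) from the simplex. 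This mirrors the Kallel--Karoui estimate that \(\BB_n(X)\) is \((2n-2+r)\)-connected for \(r\)-connected \(X\) \cite{KallelKaroui2011}.

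Minimizing the estimate over \(2p+q=l\) is where the parity appears. For odd \(l\) one needs \(q\ge1\), so at least one boundary point is present. For even \(l\) the minimum is attained at \(q=0\), \(p=l/2\), which costs one degree. This produces the bounds in \eqref{eq:connectivity-even}, and the long exact sequence then propagates the connectivity up the filtration; this is Corollary~\ref{prop:connectivity}.

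\emph{Simple connectivity and the main obstacle.} For simple connectivity when \(l\ge2\), the cells attached beyond \(\pa M\) have dimension at least one. So any loop can be homotoped into \(\BB_2^{\pa}(M)\) and then into \(\BB_2(\pa M)\subset\BB_2^{\pa}(M)\). There it is null-homotopic, because \(\BB_2(X)\) is simply connected for connected \(X\). The main obstacle is the connectivity estimate for the cofibres. The quotient by lower strata is not a plain smash product: coincidences and interior-to-boundary degenerations must be handled carefully, as must the boundary terms at low \(p\). I would first try to identify each cofibre with a symmetric-join quotient via the Kallel--Karoui suspension theorem and read off its connectivity from Dold--Thom.
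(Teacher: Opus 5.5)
The Euler characteristic part of your proposal is correct. The connectivity part has a genuine gap, and so does the simple-connectivity step.

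\textbf{Euler characteristic.} Your route differs from the paper's. You stratify by the open strata $\BB^{\circ}_{p,q}(M,\pa M)$ and use $\chi_c$. Every stratum with $q\ge1$ dies because $\chi_c(\pa M)=0$, and $\chi(\BB_l^{\pa}(M))=\sum_{p=1}^{\lfloor l/2\rfloor}(-1)^{p-1}\binom{\chi(M)}{p}$ follows at once. The paper instead does inclusion--exclusion over the top row of $\mathcal D_l$ (Lemma~\ref{meet} kills all but adjacent intersections). It then proves $\chi(\BB^p_q(M))=\chi(\BB_p(M))$ by induction on the pushout \eqref{eq:Bpq-colimit}, using the join formula and \eqref{eq:ordinary-euler}. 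Your argument is shorter and even re-derives \eqref{eq:ordinary-euler}. The price is the standard $\chi_c$ facts. Note that the open-simplex bundle is twisted by the sign action of $\mathfrak S_p\times\mathfrak S_q$; this is harmless if you compute on ordered configurations and divide by the order of the free action.

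\textbf{Connectivity.} There are two problems.

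First, inducting up the filtration \eqref{filtration} with the long exact sequence of the pair only gives $\conn\BB_l^{\pa}\ge\min\{\conn\BB_{l-1}^{\pa},\conn(\BB_l^{\pa}/\BB_{l-1}^{\pa})\}$. Iterating this never beats $\conn(\BB_1^{\pa}(M))=\conn(\pa M)$, which may be just $r$, while the target grows with $l$. You need the extra fact that $\BB_{l-1}^{\pa}(M)$ is null-homotopic in $\BB_l^{\pa}(M)$, via $s\delta_{y_0}+(1-s)\sigma$ with $y_0\in\pa M$. Then $\wt H_*(\BB_l^{\pa})$ injects into the homology of the cofibre.

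Second, even with that, the cofibre is $\simeq\BB_l^{\pa}(M)\vee\Sigma\BB_{l-1}^{\pa}(M)$ (Lemma~\ref{lem:contractible-in-next}). So for odd $l$ it is in general only $(l+r-2)$-connected, one short of the claim.
\begin{itemize}
\item At the level of strata, the $(p,1)$ piece with $l=2p+1$ contains a wedge summand $\Sigma\BB_p(M/\pa M)$, coming from the face where the single boundary weight tends to $0$. That summand is in general only $(2p+r-1)$-connected, so the extra boundary point buys one simplex degree, not $r+2$.
\item Concretely, take $l=3$ and $M=(S^{r+1}\times S^{r+1})\setminus\mathring D^{2r+2}$. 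Here $M\simeq S^{r+1}\vee S^{r+1}$ and $\pa M=S^{2r+1}$ are both $r$-connected.
\item Then $\wt H_{r+1}(\BB_2^{\pa}(M))\cong H_{r+1}(M,\pa M)\cong\Z^2$, so the cofibre has $H_{r+2}\ne0$, although $\BB_3^{\pa}(M)$ is $(r+2)$-connected.
\end{itemize}

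The parity has to be read off the closed strata themselves, as the paper does:
\begin{itemize}
\item each $\BB^p_q(M)$ is $(2(p+q)+r-2)$-connected (Proposition~\ref{connectivityBpq});
\item every entry of the Reedy cofibrant diagram $\mathcal D_l$ has $p+q\ge\lceil l/2\rceil$;
\item Lemma~\ref{colimitconnectivity} bounds the connectivity of the colimit below by that of its least connected entry.
\end{itemize}

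\textbf{Simple connectivity.} Your step fails as written. Relative cells of dimension $\ge1$ do not let you push loops down the filtration. Moving a loop of $M\cup_{\pa M}\BB_2(\pa M)$ into $\BB_2(\pa M)$ requires $\pi_1(M)=1$ together with van Kampen. In the paper, simple connectivity simply follows from the bound $l+r-2\ge1$.
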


Our second main result consists of a general homology decomposition of the boundary barycenter spaces. 
All homology groups are taken with coefficients in a field \(\F\).  We write \(\wt H_*(M)\) for the reduced homology of $M$ (these are graded vector spaces with $\wt H_0(M)=0$ since $M$ is connected). We assume \(\wt H_*(\varnothing;\F)=0\).  

\begin{thm}\label{thm:intro-general-homology}
Let \(M\) be a compact connected smooth manifold of dimension at least two with nonempty boundary.  For every \(l\ge1\), with field coefficients,
\begin{equation}\label{eq:intro-general-even}
\begin{aligned}
\widetilde H_k\bigl(\BB_{2l}^{\partial}(M);\mathbb F\bigr)
\cong {}&
\widetilde H_k\bigl(\BB_{2l}(\partial M);\mathbb F\bigr)
\oplus
\widetilde H_k\bigl(\BB_l(M/\partial M);\mathbb F\bigr)
\\
&\oplus
\bigoplus_{j=1}^{l-1}
\bigoplus_{\substack{a,b\ge0\\a+b=k-1}}
\widetilde H_a\bigl(\BB_j(M/\partial M);\mathbb F\bigr)
\otimes_{\mathbb F}
\widetilde H_b\bigl(\BB_{2l-2j}(\partial M);\mathbb F\bigr).
\end{aligned}
\end{equation}
\begin{equation}\label{eq:intro-general-odd}
\begin{aligned}
\widetilde H_k\bigl(\BB_{2l-1}^{\partial}(M);\mathbb F\bigr)
\cong {}&
\widetilde H_k\bigl(\BB_{2l-1}(\partial M);\mathbb F\bigr)
\\
&\oplus
\bigoplus_{i=1}^{l-1}
\bigoplus_{\substack{a,b\ge0\\a+b=k-1}}
\widetilde H_a\bigl(\BB_i(M/\partial M);\mathbb F\bigr)
\otimes_{\mathbb F}
\widetilde H_b\bigl(\BB_{2l-2i-1}(\partial M);\mathbb F\bigr).
\end{aligned}
\end{equation}
\end{thm}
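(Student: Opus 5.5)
The decomposition comes from filtering $\BB_n^{\pa}(M)$ by the number of interior support points, writing $n$ for the level ($n=2l$ or $n=2l-1$). Let $F_p\subset \BB_n^{\pa}(M)$ be the closed subspace of measures with at most $p$ interior support points, so that $2p+q\le n$ and
\[
\BB_n(\pa M)=F_0\subset F_1\subset\cdots\subset F_{\lfloor n/2\rfloor}=\BB_n^{\pa}(M).
\]
This is the triangular colimit filtration of the closed-stratum poset. A measure in $F_p\setminus F_{p-1}$ has exactly $p$ interior points, and a boundary part with at most $n-2p$ points.

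The plan is to identify each successive quotient $F_p/F_{p-1}$. Collapsing $F_{p-1}$ kills every configuration in which an interior point drifts to $\pa M$. The interior data therefore behave like configurations in $M/\pa M$ with the basepoint (the collapsed boundary) treated as a zero. The boundary data give a point of $\BB_{n-2p}(\pa M)$, and the mass splitting $t\in[0,1]$ between the interior and boundary parts supplies a join coordinate. Using the Kallel--Karoui identification of barycenters with symmetric joins, I expect
\[
F_p/F_{p-1}\simeq \bigl(\BB_p(M/\pa M)/\BB_{p-1}(M/\pa M)\bigr)\ast \BB_{n-2p}(\pa M)\quad\text{(relative join)}.
\]
In the top stratum $p=n/2$ with $n$ even, the boundary part is empty and the quotient is $\BB_l(M/\pa M)/\BB_{l-1}(M/\pa M)$.

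For the assembly I would avoid working stratum by stratum, and instead use a coarser two-piece decomposition. Let $A$ be the subspace with interior mass $\le 1/2$ and $B$ the subspace with boundary mass $\le 1/2$. Then $A$ deformation retracts onto $\BB_n(\pa M)$ by pushing interior mass to zero. The quotient $\BB_n^{\pa}(M)/\BB_n(\pa M)$ should decompose as a union over $j$ of joins $\BB_j(M/\pa M)\ast\BB_{n-2j}(\pa M)$, glued along their lower levels. Taking $\wt H_*$ of a join gives $\bigoplus_{a+b=k-1}\wt H_a\otimes\wt H_b$ over the field $\F$. The missing $j=0$ and $j=l$ ends are supplied by the pieces $\wt H_k(\BB_n(\pa M))$ and $\wt H_k(\BB_l(M/\pa M))$, which yields \eqref{eq:intro-general-even} and \eqref{eq:intro-general-odd}. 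In the odd case $n=2l-1$ the top index $j=l$ would need $n-2j<0$, so no pure-interior term appears, which matches \eqref{eq:intro-general-odd}.

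The main obstacle is showing that all long exact sequences split, i.e.\ that the inclusion $\BB_n(\pa M)\to\BB_n^{\pa}(M)$ and the inclusions $F_{p-1}\to F_p$ induce injections in homology, and that the telescoping of relative joins gives the nonrelative tensor products above. My first attempt would use the stable splitting of symmetric products after one suspension, via Dold--Thom and Steenrod, as in \cite{KallelKaroui2011}. That splitting makes the maps $\wt H_*(\BB_{j-1}(X))\to\wt H_*(\BB_j(X))$ injective over a field, and with it the relative joins reassemble into the stated direct sum. The step that must be checked carefully is the compatibility of the gluing maps at the diagonal where interior points merge.
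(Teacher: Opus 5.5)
There is a genuine gap: the successive quotients of your filtration are misidentified, and the splitting step rests on a false claim.

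\textbf{The quotients.} Your filtration $F_p=\bigcup_{p'\le p}\BB^{p'}_{n-2p'}(M)$, by number of interior points, is legitimate, though it is not the paper's rank filtration. By \eqref{formulaintersection}, $F_{p-1}\cap\BB^p_{n-2p}(M)=\BB^{p-1}_{n-2p+1}(M)$, so $F_p/F_{p-1}\cong\BB^p_{n-2p}(M)/\BB^{p-1}_{n-2p+1}(M)$. This is \emph{not} a relative join with $\BB_p(M/\pa M)/\BB_{p-1}(M/\pa M)$.

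What gets collapsed in the interior factor is the closed subspace of measures $s\delta_{x_0}+(1-s)\nu$, with $\nu\in\BB_{p-1}(M/\pa M)$ and $s\in[0,1]$. This subspace contains $\BB_{p-1}(M/\pa M)$ (at $s=0$) and is contractible. Hence the quotient is the absolute join $\BB_p(M/\pa M)*\BB_{n-2p}(\pa M)$ (Lemma~\ref{lem:row-quotient-splitting}). Your version adds spurious summands, since $\BB_p/\BB_{p-1}\simeq\BB_p\vee\Sigma\BB_{p-1}$.

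Concretely, take $M=D^2$ and $n=4$. The true $F_2/F_1\simeq\BB_2(S^2)$ is rationally acyclic by Lemma~\ref{poincarebarycenterspheres}, while $\BB_2(S^2)/S^2\simeq\BB_2(S^2)\vee S^3$ has rational $H_3$. Since $F_1$ has rational homology only in degrees $6,7$, your identification would force $H_3(\BB_4^{\pa}(D^2);\Q)\ne0$, contradicting Corollary~\ref{closeddisks}. Likewise, for $n=2l$ the top quotient is $\BB_l(M/\pa M)$, not $\BB_l(M/\pa M)/\BB_{l-1}(M/\pa M)$.

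\textbf{The splitting.} With the corrected quotients, the $E_1$-term of your filtration has exactly the size of the right-hand side of the theorem. So the whole content of the theorem becomes the vanishing of every connecting map, and your justification for that is false. For connected $X$, the inclusion $\BB_{j-1}(X)\hookrightarrow\BB_j(X)$ is null-homotopic: adjoin $\delta_{x_0}$ and slide its weight to $1$. It therefore induces zero on reduced homology, not an injection. The Steenrod splitting gives injectivity for $\SP^{n-1}\to\SP^n$, but under \eqref{eq:stable-bary-symmetric} the space $\BB_n$ corresponds to the quotient $\overline{\SP}^{\,n}$.

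Your plan is also internally inconsistent. ``All sequences split'' and ``relative joins telescope into absolute ones'' cannot both hold, because telescoping requires nonzero differentials. The $A/B$ cover fails too: $\mu\mapsto\mu(\mathring M)$ jumps when an interior point reaches $\pa M$, so pushing interior mass to zero is not continuous.

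\textbf{What the paper does instead.} The missing idea is the paper's choice of filtration, by rank: $T_l^{(i)}=\bigcup_{0\le j\le i}\BB^j_{l+i-2j}(M)$. The homotopy $H(s,\sigma)=s\delta_{y_0}+(1-s)\sigma$, with $y_0\in\pa M$, adds one boundary slot and moves each entry of row $i-1$ into an entry of row $i$. So $T_l^{(i-1)}$ is null-homotopic in $T_l^{(i)}$, and
\[
\wt H_*(T_l^{(i)}/T_l^{(i-1)})\cong\wt H_*(T_l^{(i)})\oplus\wt H_{*-1}(T_l^{(i-1)}).
\]
The row quotient is a wedge of joins $\BB_j(M/\pa M)*(\BB_q(\pa M)\vee\Sigma\BB_{q-1}(\pa M))$ plus two end pieces. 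The suspended copy of $T_l^{(i-1)}$ is then cancelled degreewise by induction on $i$. In this filtration the connecting maps are forced to be injective, so no splitting question ever arises.
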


As is apparent, the homology of \(\BB_l^{\pa}(M)\) decomposes into ordinary barycenter homology of \(\pa M\), barycenter homology of the quotient \(M/\pa M\), and join terms.  Thus the boundary contribution, the interior contribution, and the genuinely mixed contribution are separated algebraically.  
The rank-filtration argument suggests a homotopy-level wedge decomposition of the corresponding spaces; however, the proof below establishes only the stated homology decomposition, because wedge cancellation is unavailable in general (see \eqref{splitformula}).

The homology groups in the decompositions appearing in Theorem \ref{thm:intro-general-homology} can be computed in terms of the homology of reduced symmetric products by work of \cite{KallelKaroui2011}. The most economical general formulas are rational; these are derived in Sections \ref{sec:hemispheres} and \ref{sec:surface-specialization} for disks/hemispheres and for orientable surfaces with boundary. Section \ref{sec:surface-specialization} also records the mod-two specialization used in Section \ref{sec:variational-jump}.

We write the reduced Poincar\'e polynomial with field coefficients $\F$ as
$$\displaystyle 
\wt P_X(t;\F):=\sum_{r\ge0}\dim_{\F}\wt H_r(X;\F)t^r.$$

\begin{cor}\label{closeddisks} Let \(M\) be the closed $d$-dimensional disk, equivalently the hemisphere $H^d$, and assume $d\ge2$ is even. Then
with rational coefficients, we have the following Poincar\'e polynomials
\begin{align*}
\wt P_{\BB_{2l}^{\pa}(M)}(t;\Q)
&=
\begin{cases}
t^{2dl-1}+t^{2dl-d},&l\ge2\\
t^{2d-1}+t^{d},&l=1
\end{cases}\\
\wt P_{\BB_{2l-1}^{\pa}(M)}(t;\Q)
&=
\begin{cases}
t^{d(2l-1)-1}+t^{d(2l-2)},&l\ge2\\
t^{d-1},&l=1
\end{cases}
\end{align*}
Simpler formulas hold for odd dimensional disks, hemispheres or holed spheres (Proposition \ref{odddimensionalcase}).
\end{cor}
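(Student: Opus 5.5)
The plan is to feed the rational homology of ordinary barycenter spaces of spheres into Theorem~\ref{thm:intro-general-homology}. For \(M=D^d\) (equivalently the hemisphere \(H^d\)) we have \(\pa M=S^{d-1}\) and \(M/\pa M\cong S^d\). So every term in \eqref{eq:intro-general-even} and \eqref{eq:intro-general-odd} is the reduced rational homology of some \(\BB_n(S^{d-1})\) or \(\BB_n(S^d)\), or a tensor product of two such groups shifted by one. Since \(d\) is even, \(d-1\) is odd, and the two spheres behave very differently.

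\textbf{Step 1: rational homology of \(\BB_n\) of spheres.} I would use the Kallel--Karoui identification \cite{KallelKaroui2011} of \(\Sigma\BB_n(X)\) with the quotient \(\SP^n(\Sigma X)/\SP^{n-1}(\Sigma X)\) of symmetric products. Rationally, \(\SP^n(S^m)\) has
\[
H^*\bigl(\SP^n(S^m);\Q\bigr)=
\begin{cases}
\Q[x]/(x^{n+1}),\ |x|=m, & m\ \text{even},\\
H^*(S^m;\Q), & m\ \text{odd},
\end{cases}
\]
and the inclusions \(\SP^{n-1}\subset\SP^n\) are injective in rational homology.

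For \(X=S^{d-1}\), \(\Sigma X=S^d\) with \(d\) even, so the quotient has a single rational class, in degree \(nd\). Desuspending gives
\[
\BB_n(S^{d-1})\simeq_\Q S^{nd-1}\qquad(n\ge1).
\]
As a check, \(n=1\) recovers \(S^{d-1}\).

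For \(X=S^d\), \(\Sigma X=S^{d+1}\) is an odd sphere. Hence \(\SP^n(S^{d+1})\simeq_\Q S^{d+1}\) for all \(n\ge1\), and the quotient is rationally acyclic for \(n\ge2\). Therefore \(\BB_1(S^d)=S^d\), while \(\widetilde H_*(\BB_n(S^d);\Q)=0\) for \(n\ge2\). This is the key input and the main obstacle: one must check that the cited identification holds rationally in exactly this form, including the injectivity that makes the quotient homology the cokernel. If needed, I would instead invoke the rational computations of barycenter spaces of spheres recorded in \cite{KallelKaroui2011} directly.

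\textbf{Step 2: substitute into the decomposition.} The vanishing of \(\BB_j(S^d)\) for \(j\ge2\) collapses every sum over \(j\) (respectively \(i\)) to the single term \(j=1\) (respectively \(i=1\)). That term is present only when \(l\ge2\); the shift \(a+b=k-1\) contributes an extra factor of \(t\).

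In the even case \eqref{eq:intro-general-even}, the three contributions are:
\begin{itemize}
\item the boundary term \(t^{2ld-1}\);
\item the term \(\BB_l(S^d)\), which gives \(t^d\) if \(l=1\) and vanishes otherwise;
\item the join term \(t\cdot t^{d}\cdot t^{(2l-2)d-1}=t^{2dl-d}\) for \(l\ge2\).
\end{itemize}
Summing yields \(t^{2dl-1}+t^{2dl-d}\) for \(l\ge2\) and \(t^{2d-1}+t^d\) for \(l=1\).

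In the odd case \eqref{eq:intro-general-odd}, the contributions are:
\begin{itemize}
\item the boundary term \(t^{(2l-1)d-1}\);
\item for \(l\ge2\), the join term \(t\cdot t^d\cdot t^{(2l-3)d-1}=t^{d(2l-2)}\).
\end{itemize}
For \(l=1\) only \(t^{d-1}\) remains, matching \(\BB_1^\pa(M)=\pa M\).

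Finally, I would note that the hemisphere and the disk are homeomorphic as manifolds with boundary, so the two formulations of the corollary agree.
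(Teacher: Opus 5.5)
Your proposal is correct and follows the paper's own argument. The paper likewise specializes the decomposition, in its Poincar\'e-polynomial form (Corollary~\ref{poincarepolynomial}), to $\pa H^d=S^{d-1}$ and $H^d/\pa H^d\simeq S^d$, and then inserts the rational Poincar\'e polynomials of $\BB_n$ of spheres from Lemma~\ref{poincarebarycenterspheres}, obtained as in your Step~1 via $\Sigma\BB_n(X)\simeq\overline{\SP}^{\,n}(\Sigma X)$ and rational symmetric products of spheres; your bookkeeping of the surviving $j=1$ (resp.\ $i=1$) join terms and of the $l=1$ cases matches the stated formulas.
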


\begin{rem}\rm Note that the Euler characteristics obtained from these polynomials agree with Theorem \ref{thm:intro-euler}: Since $H^d$ is contractible, so are $\BB_l(H^d)$ for all $l\ge 1$, with $\chi (\BB_l(H^d))=1$. On the other hand \(\ch(\BB_{2l}^{\pa}(H^{d}))=1\) for every \(l\ge1\), while \(\ch(\BB_{2l-1}^{\pa}(H^{d}))=0\) for \(l=1\) and equals \(1\) for \(l\ge2\).
\end{rem}

The next application is to Riemann surfaces with boundary, and the computations are more tedious. The following is the content of Lemma \ref{QNn} and Proposition \ref{prop:Rbn-formula}. 

\begin{cor}\label{surface-case} Let \(M=\Si\) be a compact connected orientable surface of genus \(\mathfrak g\) with boundary
$
\pa\Si=\Gamma_1\sqcup\cdots\sqcup\Gamma_b$, where each connected component $\Gamma_j$ is a copy of the circle $S^1$. Let $N=2\mfg + b-1$ and assume $N\geq 1$.
Then
\begin{align*}
\wt P_{\BB_{2l}^{\pa}(\Si)}(t;\Q)
&=
R_{b,2l}(t)+Q_{N,l}(t)
+t\sum_{i=1}^{l-1}
Q_{N,i}(t)R_{b,2l-2i}(t),
\\
\hbox{and}\ \ \wt P_{\BB_{2l-1}^{\pa}(\Si)}(t;\Q)
&=
R_{b,2l-1}(t)
+t\sum_{i=1}^{l-1}
Q_{N,i}(t)R_{b,2l-2i-1}(t)
\end{align*}
where
\begin{eqnarray*}
Q_{N,n}(t)
&=&
\binom{N+n-1}{n}t^{2n-1}
+
\binom{N+n-2}{n-1}t^{2n}\\
R_{b,n}(t)&=&
\sum_{c=0}^{\min\{b-1,n\}}
\binom{b-1}{c}\binom{b+n-c-1}{n-c}t^{2n-c-1}.
\end{eqnarray*}
\end{cor}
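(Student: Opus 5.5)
Corollary \ref{surface-case} follows from Theorem \ref{thm:intro-general-homology} once the two families of ordinary barycenter spaces entering the decomposition are computed rationally. The plan is to establish, with \(\Q\)-coefficients,
\[
\wt P_{\BB_n(\Si/\pa\Si)}(t;\Q)=Q_{N,n}(t),\qquad \wt P_{\BB_n(\pa\Si)}(t;\Q)=R_{b,n}(t),
\]
and then substitute. Over a field, the Künneth-type term \(\bigoplus_{a+b=k-1}\wt H_a\otimes\wt H_b\) has Poincaré series \(t\cdot\wt P\cdot\wt P\), since it is the reduced homology of a join. Summing the decomposition of Theorem \ref{thm:intro-general-homology} over \(k\) therefore gives exactly the two displayed formulas. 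In the even case the diagonal term \(\wt H_*(\BB_l(M/\pa M))\) contributes \(Q_{N,l}\), and the join terms range over \(1\le j\le l-1\).

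For \(Q_{N,n}\), I observe that \(\Si/\pa\Si\) is a closed connected space whose rational homology is that of a wedge of \(N=2\mfg+b-1\) circles together with a single top class in degree \(2\). Indeed, collapsing \(b\) boundary circles produces \(b-1\) extra \(1\)-classes. I will use the Kallel--Karoui identification \(\Sigma\BB_n(X)\simeq \overline{\SP}^n(\Sigma X)\), which gives \(\wt H_{*}(\BB_n X)\cong \wt H_{*+1}(\SP^n\Sigma X,\SP^{n-1}\Sigma X)\). Rationally, \(H_*(\SP^n Y;\Q)\) is the degree-\(n\) part of the free graded-commutative algebra on \(\wt H_*(Y)\) plus a unit. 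For \(Y=\Sigma(\Si/\pa\Si)\) the generators are \(N\) even classes \(x_i\) in degree \(2\) and one odd class \(y\) in degree \(3\). The words of length exactly \(n\) are then monomials of degree \(n\) in the \(x_i\), giving \(\binom{N+n-1}{n}\) classes in degree \(2n\), and monomials of degree \(n-1\) times \(y\), giving \(\binom{N+n-2}{n-1}\) classes in degree \(2n+1\). Desuspending yields \(Q_{N,n}\).

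For \(R_{b,n}\), the same procedure applies to \(\pa\Si=\bigsqcup_{j=1}^b S^1\). Here \(\Sigma(\pa\Si)\) is rationally a wedge of \(b-1\) circles (odd generators \(u_c\) in degree \(1\)) and \(b\) two-spheres (even generators \(v_j\) in degree \(2\)). A length-\(n\) word uses \(c\le\min(b-1,n)\) distinct \(u\)'s, counted by \(\binom{b-1}{c}\), and a degree-\((n-c)\) monomial in the \(v\)'s, counted by \(\binom{b+n-c-1}{n-c}\). Its degree is \(c+2(n-c)=2n-c\), and desuspending gives \(t^{2n-c-1}\). This is exactly \(R_{b,n}\).

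The main obstacle is justifying that the relative homology \(H_*(\SP^n,\SP^{n-1};\Q)\) is precisely the word-length-\(n\) summand, that is, that the inclusions \(\SP^{n-1}Y\to\SP^nY\) are rationally split injective with cokernel the length-\(n\) words. This requires the rational description of \(\SP^n\) of a connected CW space, via Macdonald/Milgram or Dold's theorem. The description must be applied to \(Y\) after the suspension, which is connected even though \(\pa\Si\) itself is disconnected. I will invoke the explicit statement recorded in Lemma \ref{QNn} and Proposition \ref{prop:Rbn-formula}. The remaining step, the bookkeeping of the index ranges in the even and odd cases, is routine.
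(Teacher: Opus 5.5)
Your skeleton is the paper's: you substitute into Theorem \ref{thm:intro-general-homology} (i.e.\ Corollary \ref{poincarepolynomial}), and your computation of $Q_{N,n}$ is exactly the proof of Lemma \ref{QNn}. The genuine difference is in $R_{b,n}$.

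The paper never applies $\Sigma\BB_n(X)\simeq\overline{\SP}^n(\Sigma X)$ to the disconnected boundary. Instead it peels off one circle at a time using the two-component splitting of Theorem \ref{thm:disjoint-two-components}, starts from $\BB_n(S^1)\simeq S^{2n-1}$, and solves the recursion $F_b=\frac{1+tx}{1-t^2x}F_{b-1}$. You count length-$n$ words in $\Lambda(u_1,\dots,u_{b-1})\otimes\Q[v_1,\dots,v_b]$ in one step. This also explains the paper's closed form: $F_b(x,t)=(1+tx)^{b-1}/(1-t^2x)^{b}$ is precisely the bigraded Poincar\'e series of that algebra, with $x$ recording word length.

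Your route is more uniform, since one symmetric-algebra count gives both $Q$ and $R$ and no recursion is needed. Its cost is that it needs the Kallel--Karoui identification for $X=\pa\Si=\bigsqcup_b S^1$, whereas the paper quotes that identification only for connected $X$. The connectedness you checked, that of $Y=\Sigma\pa\Si$, is what the Macdonald--Dold step needs; it is not what the identification needs.

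You can close this in one of two ways. You can verify that the Kallel--Karoui argument never uses connectedness of $X$. Alternatively, note that, granting the identification for $A$ and $B$, its homological form for $A\sqcup B$ is equivalent to Theorem \ref{thm:disjoint-two-components}. This uses $\Sigma(A\sqcup B)\simeq\Sigma A\vee\Sigma B\vee S^1$, the wedge formula for $\overline{\SP}^n$, and $\overline{\SP}^k(S^1)\simeq *$ for $k\ge 2$. So the two routes rest on essentially the same input, and your closing appeal to Proposition \ref{prop:Rbn-formula} is in effect this fallback.

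The paper's route has one side benefit: it is characteristic-free. That is what Section \ref{sec:variational-jump} uses when it works over $\mathbb F_2$.
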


In the case $M=\Sigma$, the rational surface formulas of Corollary \ref{surface-case} evaluated at \(t=-1\) reproduce exactly Theorem \ref{thm:intro-euler}.
Similar computations are obtained over the field $\mathbb F_2$ (Lemma \ref{lem:Q-mod-two}), and then used in our final section \S\ref{sec:variational-jump}.

The paper is organized as follows.  Sections \ref{sec:def}--\ref{sec:general-homology} develop the closed strata, the triangular colimit filtration, the Euler-characteristic calculation, and the homology decomposition.  Sections \ref{sec:hemispheres} and \ref{sec:surface-specialization} specialize the formulas to hemispheres and to compact orientable surfaces with boundary. Section \ref{sec:variational-jump}
 focuses on the two-dimensional Neumann mean-field equation as in Ahmedou--Hu--Wang \cite{AHW2026} and explains how the mod-two polynomials enter the topology at infinity in the resonant case: at resonance \(\rho=4\pi\kappa\), \(\kappa\ge2\), a mixed critical point at infinity has type \(2p+q=\kappa\), and the Betti numbers computed here give the right-hand side of the resonant strong Morse relation.\\
 
 \vskip 5pt
\noindent{\sc Acknowledgements}: The authors used ChatGPT to assist with the algebraic manipulations in the generating-series calculations of \S\ref{sec:surface-specialization} leading to Corollary \ref{surface-case}. 
 \vskip 5pt

\section{Barycenter spaces}\label{prelim}

In this paper, $\mathfrak S_k$ will denote the symmetric group on $k$ letters. As in \cite{KallelKaroui2011, KW}, we define the barycenter spaces \eqref{baryspace} as a quotient of the product
\[
\BB_n(X)=\bigsqcup_{k=1}^n X^k\times \Delta^{k-1}\slash_{\sim}
\] where 
$\displaystyle\Delta^{k-1}:=\left\{(t_1,\ldots, t_k)\in[0,1]^k:\sum_{i=1}^kt_i=1\right\}$
is the $k-1$-dimensional simplex, and
the equivalence relation $\sim$ is generated by the following three relations:
\begin{enumerate}\setlength{\itemsep}{0pt}
\item $(x_1,\ldots, x_k,t_1,\ldots, t_k)\sim(x_{\sigma(1)},\ldots, x_{\sigma(k)}, t_{\sigma(1)},\ldots, t_{\sigma(k)})$ for $\sigma\in \mathfrak S_k$;
\item $(x_1,\ldots, x_k,t_1,\ldots, t_k)\sim (x_1,\ldots, x_{k-1}, t_1,\ldots, t_{k-1}+t_k)$ if $x_{k-1}=x_k$; and
\item $(x_1,\ldots, x_k,t_1,\ldots, t_k)\sim (x_1,\ldots, x_{k-1}, t_1,\ldots, t_{k-1})$ if $t_k=0$.
\end{enumerate}
This space $\BB_n(X)$ is a closed subspace of every $\BB_m(X)$, with $m>n$, and is in fact a closed subspace of the colimit $\BB(X):= \BB_\infty(X)$. 

The space $\BB(X)$ is also known as the space of \textit{finitely supported probability measures}.
More precisely,  a probability measure with finite support is a convex linear combination $\sum_{i=1}^nt_i\delta_{x_i}$, where $\delta_x$ is the point mass or Dirac delta ``function'' assigning full measure to the singleton $\{x\}$. 
Every point of $\BB(X)$ can be written uniquely in the \textit{reduced} form $\sum_{i=1}^nt_i\delta_{x_i}$ subject to the following two conditions: 
$$\hbox{(i) $t_i>0$ for $1\leq i\leq n$; and (ii) $x_i\neq x_j$ for $i\neq j$}$$
When written in this reduced form, the coefficient of $\delta_x$ in $\mu\in \BB(X)$ is well-defined and denoted $\mu(x)$. As defined in \cite{DJ, KW}, the \emph{support} of $\mu\in\BB(X)$ is the (finite) subset $\mathrm{supp}(\mu)=\{x\in X: \mu(x)\neq0\}$. Under this definition, $\BB_n(X)\subset \BB(X)$ coincides with the subspace of probability measures $\mu$ with $|\mathrm{supp}(\mu)|\leq n$.  This is a closed subspace under the L\'{e}vy--Prokhorov
topology that we discuss next.

The Lévy--Prokhorov metric $\rho$ is defined on $\BB(X)$ for metrizable $X$. The distance between any two given probability measures $\mu,\nu\in \BB(X)$ is given by
$$
\rho_X(\mu,\nu)
=
\inf\left\{
\varepsilon>0:
\begin{array}{l}
\mu(A)\leq \nu(A^\varepsilon)+\varepsilon,\\[1mm]
\nu(A)\leq \mu(A^\varepsilon)+\varepsilon,
\end{array}
\ \text{for every Borel set }A\subset X
\right\},
$$
where \(A\) ranges over the Borel \(\sigma\)-algebra of \(X\), and $A^{\varepsilon}$ is the $\varepsilon$-neighborhood of $A$ defined as 
$$A^{\varepsilon}=\{x\in X \mid d(x,y) < \varepsilon \text{ for some } y\in A\}.$$   
Here, the infimum is taken over all measurable sets $A$. 
A basis at a point $\mu=\sum\lambda_z\delta_z$ of $\BB_n(X)$ is given as follows \cite{DJ}. If the support has at least two points, let $\delta=\min\{d(x,y)\mid x,y\in \supp\mu, \hspace{1mm} x\ne y\}$; for a one-point support, choose any $\delta>0$. Then for $\varepsilon<\delta/2$, the open set $U(\mu,\varepsilon)$, defined below as 
\[
\begin{aligned}
U(\mu,\varepsilon):=\Biggl\{\mu'=\sum_x\lambda'_x\delta_x\in\BB_n(X)\ \Biggm|\ 
&\lambda_z<\sum_{x\in B(z,\varepsilon)}\lambda'_x+\frac{\varepsilon}{n}\\[-1mm]
&\text{for every }z\in\supp\mu\Biggr\}
\subset B_\rho(\mu,\varepsilon).
\end{aligned}
\]
lies in the open ball $B_\rho(\mu,\varepsilon)$. The sets $U(\mu,\varepsilon)$ form a basis of the topology at $\mu$ in $\BB_n(X)$.

In \cite{KW}, the authors state that for metric spaces $X$, ``it seems likely that the Lévy-Prokhorov topology coincides with the quotient topology in the compact case but not in general'', and in their Lemma 8.4, they give evidence for this.  We check their statement below.

\begin{lem}
Let \(X\) be a compact metric space. Then the quotient topology on
$\BB_n(X)$
coincides with the topology induced from the Lévy--Prokhorov metric on the space
of probability measures of finite support.
\end{lem}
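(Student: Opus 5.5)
The plan is the standard compact-to-Hausdorff argument. Let \(Y_n=\bigsqcup_{k=1}^n X^k\times\Delta^{k-1}\), which is compact because \(X\) is compact, and let \(\pi:Y_n\to\BB_n(X)\) be the map sending \((x_1,\dots,x_k,t_1,\dots,t_k)\) to the measure \(\sum_i t_i\delta_{x_i}\). By construction of \(\sim\), \(\pi\) is surjective and its fibres are exactly the \(\sim\)-classes, because every point has a unique reduced form. We will show:
\begin{enumerate}
\item \(\pi\) is continuous when \(\BB_n(X)\) carries the Lévy--Prokhorov topology;
\item that topology is Hausdorff, which is automatic since \(\rho_X\) is a metric on probability measures.
\end{enumerate}
The induced bijection from the quotient space \(Y_n/\!\sim\) to \((\BB_n(X),\rho_X)\) is then a continuous bijection from a compact space to a Hausdorff space. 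Hence it is a homeomorphism, and the two topologies coincide.

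The main step is continuity of \(\pi\), which I would check directly on the metric. Let \((x,t)\) and \((x',t')\) lie in the same component \(X^k\times\Delta^{k-1}\), with \(d(x_i,x_i')<\eta\) for all \(i\) and \(\sum_i|t_i-t_i'|<\eta\). Fix a Borel set \(A\), and let \(I=\{i: x_i\in A\}\). Then
\[
\mu(A)\le\sum_{i\in I}t_i\le\sum_{i\in I}t'_i+\eta\le\nu(A^\eta)+\eta ,
\]
because \(x_i'\in A^\eta\) for \(i\in I\). The reverse inequality holds by symmetry, so \(\rho_X(\pi(x,t),\pi(x',t'))\le\eta\). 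Coincident points and zero weights cause no trouble, because we are summing masses and not reading off reduced coefficients. This gives a Lipschitz-type estimate on each piece \(X^k\times\Delta^{k-1}\), hence continuity on the finite disjoint union \(Y_n\).

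I expect no real obstacle beyond two checks. The first is that the quotient space is exactly the set \(\BB_n(X)\) of measures with support of size at most \(n\); this is the uniqueness of the reduced form already noted above. The second is that the Lévy--Prokhorov topology on \(\BB_n(X)\) is the subspace topology from the full space of finitely supported probability measures. That is immediate, since both are induced by the same metric \(\rho_X\). Compactness of \(X\) is used only to make \(Y_n\) compact, which is consistent with the failure of the statement expected in \cite{KW} for noncompact \(X\).
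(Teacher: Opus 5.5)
Your proof is correct and follows the same structure as the paper's: the induced map from the compact quotient $\bigsqcup_{k\le n}X^k\times\Delta^{k-1}/\!\sim$ to the metric, hence Hausdorff, space $(\BB_n(X),\rho_X)$ is a continuous bijection and therefore a homeomorphism. The only difference is the continuity step: the paper uses the agreement of the Lévy--Prokhorov and weak$^*$ topologies and tests against $f\in C(X)$, whereas your direct estimate $\rho_X(\pi(x,t),\pi(x',t'))\le\eta$ is self-contained and shows that compactness of $X$ is needed only to make the source compact.
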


\begin{proof}
Set
$
A_n:=\coprod_{k=1}^n X^k\times \Delta^{k-1},
$
and let
$q:A_n\longrightarrow \BB_n(X)
$
be the quotient map.
We consider on the same underlying set \(\BB_n(X)\) also the subspace topology
induced from the Lévy--Prokhorov metric on the space of Borel probability
measures on \(X\). Denote this topological space by \(\BB_n(X)_{\rho}\), and denote
by \(\BB_n(X)_q\) the quotient-topologized space.

We claim that the map
\[
q:A_n\longrightarrow \BB_n(X)_{\rho},
\qquad
(x_1,\dots,x_k,t_1,\dots,t_k)\longmapsto \sum_{i=1}^k t_i\delta_{x_i},
\]
is continuous.
Indeed, since \(X\) is compact metric, the Lévy--Prokhorov topology agrees with
the weak\(^*\) topology on the space of Borel probability measures on \(X\); see \cite{Billingsley}.
Thus it suffices to check continuity against continuous test functions
\(f\in C(X)\). But for each \(k\),
\[
A_n \supset X^k\times \Delta^{k-1}\longrightarrow \mathbb R,
\qquad
(x_1,\dots,x_k,t_1,\dots,t_k)\longmapsto
\int_X f\, d\!\left(\sum_{i=1}^k t_i\delta_{x_i}\right)
=\sum_{i=1}^k t_i f(x_i),
\]
is continuous, since addition and multiplication are continuous and \(f\) is
continuous. Hence each restriction
$\displaystyle 
X^k\times \Delta^{k-1}\longrightarrow \BB_n(X)_{\rho}$
is continuous, and therefore \(q\) is continuous on the disjoint union \(A_n\), so it descends to a continuous map on the quotient
\begin{equation}\label{identity}
\mathrm{id}:\BB_n(X)_q\longrightarrow \BB_n(X)_{\rho}
\end{equation}
which is the identity on the underlying set.
Now \(A_n\) is compact, then so is \(\BB_n(X)_q\).
On the other hand, \(\BB_n(X)_{\rho}\) is metrizable, hence Hausdorff.
The map \eqref{identity}
is clearly bijective, since both spaces have the same underlying set. A continuous
bijection from a compact space to a Hausdorff space is a homeomorphism.
Therefore the quotient topology and the Lévy--Prokhorov topology on \(\BB_n(X)\)
coincide.
\end{proof}

\section{Boundary barycenter spaces}
\label{sec:def}

Let \(M\) be a compact connected smooth manifold of dimension at least two with nonempty boundary.  For \(p,q\in\N\cup\{0\}\), \(p+q\ge1\), define the open mixed stratum
\begin{equation}\label{eq:Bpq-open}
\BB_{p,q}^{\circ}(M,\pa M):=
\left\{
\sum_{i=1}^{p}\alpha_i\delta_{a_i}
+
\sum_{j=1}^{q}\beta_j\delta_{b_j}:
\begin{array}{l}
 a_i\in\mathring M,
 \quad b_j\in\pa M,\\
 a_i\ne a_{i'}\ (i\ne i'),
 \quad b_j\ne b_{j'}\ (j\ne j'),\\
 \alpha_i,\beta_j>0,
 \quad \sum_i\alpha_i+\sum_j\beta_j=1
\end{array}
\right\}.
\end{equation}
This is the stratum with \textit{$p$ interior support points and $q$ boundary support points}.
At the endpoints, for \(p\ge1\) and \(q\ge1\), respectively, the corresponding distinctness condition is omitted, and
$$\BB_{p,0}^\circ (M,\partial M) = \BB_p(\mathring{M})\setminus \BB_{p-1}(\mathring M)\ \ ,\ \ \BB_{0,q}^\circ (M,\partial M) = \BB_q(\partial M)\setminus \BB_{q-1}(\partial M)$$
The set \eqref{eq:Bpq-open} is topologized as a subspace of $\BB_{p+q}(M)$. It is convenient to refer to a point in $\BB_{p+q}(M)$ as a \textit{configuration}. The coefficients $\alpha_i$ and $\beta_j$ of a configuration in $\BB_{p,q}^\circ (M,\pa M)$ are called the \textit{weights}, and the collection of slots $\{a_1,\ldots, a_p,b_1,\ldots, b_q\}$ is called the \textit{effective support} of the configuration.

It is convenient at this stage to introduce the following closed strata: for \(p,q\ge0\) with \(p+q\ge1\), set
\begin{eqnarray}\label{eq:closed-strata}
\BB_q^p(M)&:=&
\left\{
\mu\in \BB_{p+q}(M):
\bigl|\supp(\mu)\cap\mathring M\bigr|\le p
\right\}\\
&=&\bigcup_{\substack{p',q'\ge0\\1\le p'+q'\le p+q,\ p'\leq p}} \BB_{p',q'}^{\circ}(M,\pa M)\nonumber
\end{eqnarray}
Evidently $
\BB_{q+i}^{p-i}(M)\subset \BB_q^p(M)$, for $0\le i\le p$, since each time an interior slot is allowed to lie on the boundary, one interior slot is converted into one boundary slot. At the endpoints,
$$\BB_q^0(M) = \BB_q(\pa M)\quad(q\ge1),
\qquad
\BB_0^p(M) = \BB_p(M)\quad(p\ge1).$$

Let us now recall that a space is \textit{locally closed} in a metric space if it is open in its closure. A \textit{decomposition} of a space $X$ is a partition of $X$ into a disjoint union of locally closed subspaces.

\begin{lem}
Assume $M$ is compact with boundary $\pa M$, $\dim M\geq 2$ and $\pa M\neq\emptyset$. We have the decomposition
    $$\displaystyle \BB_n(M) = \bigsqcup_{\substack{p,q\ge0\\1\le p+q\leq n}} 
\BB_{p,q}^{\circ}(M,\pa M)$$
In particular, $\BB_q^p(M)$ is the closure of 
$\BB_{p,q}^{\circ}(M,\pa M)$ in
$\BB_{p+q}(M)$ and, for \(p,q\ge1\),
$$\BB_q^p(M)\setminus \BB_{p,q}^{\circ}(M,\pa M) = \BB_{q+1}^{p-1}(M)\cup \BB^p_{q-1}(M).$$
At the endpoints the corresponding formulas are
\[
\BB_q^0(M)\setminus \BB_{0,q}^{\circ}(M,\pa M)=\BB_{q-1}^0(M),
\qquad
\BB_0^p(M)\setminus \BB_{p,0}^{\circ}(M,\pa M)=\BB_1^{p-1}(M).
\]
\end{lem}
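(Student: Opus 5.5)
The plan is to work directly with reduced forms, using the Lévy--Prokhorov description of the topology from the preceding lemma and the basic neighborhoods \(U(\mu,\varepsilon)\).

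\emph{Partition.} Every \(\mu\in\BB_n(M)\) has a unique reduced form \(\sum t_i\delta_{x_i}\) with \(t_i>0\) and distinct \(x_i\). Since \(M=\mathring M\sqcup\pa M\), the support splits uniquely into \(p=|\supp\mu\cap\mathring M|\) interior points and \(q=|\supp\mu\cap\pa M|\) boundary points, with \(1\le p+q\le n\). Thus \(\BB_n(M)\) is the disjoint union of the \(\BB^\circ_{p,q}(M,\pa M)\), and the second description of \(\BB^p_q(M)\) in \eqref{eq:closed-strata} follows at once.

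\emph{Closedness and semicontinuity.} I will show \(\BB^p_q(M)\) is closed in \(\BB_{p+q}(M)\). Let \(\mu_k\to\mu\) with \(|\supp\mu_k\cap\mathring M|\le p\). Suppose \(\mu\) had \(p+1\) interior support points \(z_0,\dots,z_p\). Choose \(\varepsilon\) smaller than half their mutual distances and smaller than their distances to \(\pa M\). The condition \(\mu_k\in U(\mu,\varepsilon)\) forces \(\mu_k\) to charge each ball \(B(z_i,\varepsilon)\subset\mathring M\). These balls are disjoint, so \(\mu_k\) would have at least \(p+1\) interior points, a contradiction. The same argument shows that the total support size is lower semicontinuous, i.e.\ each \(\BB_m\) is closed. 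Hence \(\BB^\circ_{p,q}\) equals \(\BB^p_q(M)\) minus the closed set \(\BB^{p-1}_{q+1}(M)\cup\BB^p_{q-1}(M)\), so it is open in \(\BB^p_q(M)\) and therefore locally closed once the closure statement is established.

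\emph{Complement formula.} A point of \(\BB^p_q(M)\setminus\BB^\circ_{p,q}\) has \(p'\le p\) interior points and \(q'\) boundary points with \(p'+q'\le p+q\) and \((p',q')\ne(p,q)\). Split into two cases:
\begin{itemize}
\item if \(p'<p\), then it has at most \(p-1\) interior points and total support at most \(p+q\), so it lies in \(\BB^{p-1}_{q+1}\);
\item if \(p'=p\), then \(q'<q\), so it lies in \(\BB^p_{q-1}\).
\end{itemize}
The reverse inclusion is the evident containment noted after \eqref{eq:closed-strata}. The endpoint cases \(p=0\) and \(q=0\) are handled by the same bookkeeping.

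\emph{Density (the main step).} It remains to show \(\BB^\circ_{p,q}\) is dense in \(\BB^p_q(M)\). Take \(\mu\in\BB^\circ_{p',q'}\) with \(p'\le p\) and \(p'+q'\le p+q\). I would approximate it explicitly in two moves.
\begin{itemize}
\item First, if \(q'>q\), push \(q'-q\) boundary points slightly into the interior along an inward collar. This is possible since \(\pa M\) has a collar, and it raises the interior count to \(p''=p'+(q'-q)\le p\) while leaving \(q\) boundary points.
\item Second, add the missing \(p-p''\) interior points near an existing support point, or anywhere, with small weights \(\eta\). Likewise add the missing boundary points near a point of \(\pa M\) (nonempty) with weight \(\eta\), renormalizing the weights. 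Since \(\dim M\ge2\), \(\mathring M\) and \(\pa M\) are infinite, so the new points can be chosen distinct.
\end{itemize}
As \(\eta\to0\) and the pushes tend to \(0\), these configurations converge to \(\mu\) in Lévy--Prokhorov distance, since the map \(q\) in the preceding lemma is continuous.

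One subtlety needs care: when \(\pa M\) is zero-dimensional-free but \(\dim\pa M=\dim M-1\ge1\), distinct boundary points can always be found. This is why \(\dim M\ge2\) is assumed. I expect this density construction to be the main point to get right.
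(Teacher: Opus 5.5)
Your proof is correct and follows essentially the paper's route: partition by support type, closedness because limits cannot increase the number of interior support points, and density via a collar push together with small added (or split) weights. You make the complement bookkeeping and the role of $\dim M\ge2$ (infinitely many boundary points) more explicit than the paper does, but one detail needs fixing. In the closedness step you must also take $\varepsilon<\min_i\mu(z_i)$; otherwise the defining inequality of $U(\mu,\varepsilon)$ does not force $\mu_k$ to charge each ball $B(z_i,\varepsilon)$.
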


\begin{proof}
For distinct pairs $(p,q)$, the spaces $
\BB_{p,q}^{\circ}(M,\pa M)$ are obviously disjoint and, as
\(1\le p+q\le n\), exhaust $\BB_n(M)$.  We now analyze the closure of
each $\BB_{p,q}^{\circ}(M,\pa M)$ in $\BB_{p+q}(M)$.
Let a sequence in \(\BB_{p,q}^{\circ}(M,\pa M)\) converge in \(\BB_{p+q}(M)\).  In the limit, some weights may become zero, support points may collide, and some interior points may converge to \(\pa M\).  None of these operations can create more than \(p\) effective interior support points.  Thus every limit belongs to \(\BB_q^p(M)\).

Conversely, let \(\sigma\in\BB_q^p(M)\).  Write \(\sigma\) with effective interior support \(a_1,\ldots,a_r\), \(r\le p\), and effective boundary support \(b_1,\ldots,b_s\), with \(r+s\le p+q\).  If fewer than \(p+q\) slots are present, split some positive weights into several nearby positive weights; this does not change the limit.  If more than \(q\) effective boundary slots are needed, use a collar neighborhood of \(\pa M\) and move the excess boundary slots a distance \(\epsilon>0\) into \(\mathring M\).  After also separating coincident points by distances \(O(\epsilon)\), we obtain an element of \(\BB_{p,q}^{\circ}(M,\pa M)\) converging to \(\sigma\) as \(\epsilon\to0\).  Hence \(\BB_q^p(M)=\overline{\BB_{p,q}^{\circ}(M,\pa M)}\).

Finally, the displayed boundary formulas are closed subsets of $\BB_q^p(M)$. Thus $\BB_{p,q}^{\circ}(M,\pa M)$ is open in its closure and is locally closed in $\BB_{p+q}(M)$.
\end{proof}

\begin{defn}\rm The boundary-weighted barycenter space of order \(l\ge1\) is
\begin{equation*}
\BB_l^{\pa}(M):=
\bigcup_{0<2p+q\le l}\BB_{p,q}^{\circ}(M,\pa M).
\end{equation*}
This is topologized as a subspace of $\BB_l(M)$.  Thus the union is disjoint by effective support type, while its closure relations are encoded by lower strata.
\end{defn}

\begin{lem}\label{lem:closed-strata} $\displaystyle
\BB_l^{\pa}(M)=\bigcup_{0<2p+q\le l}\BB_q^p(M).$
\end{lem}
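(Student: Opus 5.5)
We prove the two inclusions directly from the definitions. Both sides are subsets of \(\BB_l(M)\): every closed stratum \(\BB_q^p(M)\) with \(2p+q\le l\) lies in \(\BB_{p+q}(M)\), and \(p+q\le 2p+q\le l\). So it suffices to compare the underlying sets of measures.

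For the inclusion \(\subseteq\), take a point of \(\BB_l^{\pa}(M)\). By definition it lies in some open stratum \(\BB_{p,q}^{\circ}(M,\pa M)\) with \(0<2p+q\le l\). By the preceding lemma, \(\BB_q^p(M)\) is the closure of this open stratum in \(\BB_{p+q}(M)\), so the point lies in \(\BB_q^p(M)\). The inclusion is then immediate.

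For the inclusion \(\supseteq\), let \(\mu\in\BB_q^p(M)\) with \(0<2p+q\le l\). By the description \eqref{eq:closed-strata}, \(\mu\) lies in some \(\BB_{p',q'}^{\circ}(M,\pa M)\) with \(p'\le p\) and \(p'+q'\le p+q\). The key step is the cost estimate
\[
2p'+q' = p' + (p'+q') \le p + (p+q) = 2p+q \le l .
\]
Positivity \(2p'+q'>0\) holds because \(p'+q'\ge1\). Hence \(\mu\in\BB_{p',q'}^{\circ}(M,\pa M)\subset \BB_l^{\pa}(M)\).

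The only point needing care is this inequality. Passing to the closure can convert interior slots into boundary slots, which lowers the cost, or merge or delete slots, which also lowers it. It can never raise the number of interior points. Apart from that, the endpoint conventions \(p=0\) or \(q=0\) should be checked, and the description \eqref{eq:closed-strata} covers these uniformly. Because the sets coincide and both carry the subspace topology from \(\BB_l(M)\), this is an equality of spaces. As a consequence, \(\BB_l^{\pa}(M)\) is a finite union of closed subsets of \(\BB_l(M)\), and is therefore closed and compact.
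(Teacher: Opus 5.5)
Your proof is correct and is essentially the paper's argument. The paper writes out only the inclusion $\supseteq$, using the same estimate $2r+s\le r+(p+q)\le 2p+q\le l$ for a point with $r\le p$ interior and $s\le p+q-r$ boundary support points. The reverse inclusion is immediate because $\BB_{p,q}^{\circ}(M,\pa M)\subset\BB_q^p(M)$ directly by \eqref{eq:closed-strata}, so you do not need the closure lemma there.
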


\begin{proof} If \(\sigma\in\BB_q^p(M)\) and \(2p+q\le l\), then its effective support has, say, \(r\le p\) interior points and \(s\le p+q-r\) boundary points.  Hence \(2r+s\le p+q+r\le2p+q\le l\), so \(\sigma\) belongs to the open-stratum union defining \(\BB_l^{\pa}(M)\).
\end{proof}

\begin{exa}\rm \label{rem:not-homotopy-invariant} We consider $M=D$  the closed unit disk. According to Lemma \ref{lem:closed-strata}, we can write
$$
\BB_2^{\pa}(D)= \BB_2^0(D)\cup \BB^1_0(D) =\BB_2(\pa D)\cup \BB_1(D) $$
and these intersect as follows
$\BB_2(\pa D)\cap \BB_1(D)=\BB_1(\pa D)=\pa D = S^1.$ This pushout description is represented diagrammatically by the following diagram
$$\xymatrix{
\BB_2^0=\BB_2(\pa D)&
\BB_1^0=\pa D\ar[l]\ar[r]&
\BB_0^1=\BB_1(D)}$$
From this pushout, the homotopy type is straightforward. Since \(\BB_2(\pa D) = \BB_2(S^1)\simeq S^3\) \cite[Corollary 1.4(b)]{KallelKaroui2011}, the space $\BB_2^\partial (D)$ is therefore obtained by gluing the disk $D=\BB_1(D)$ to a circle embedded in \(S^3\).  In other words, $\BB_2^\partial (D)$ is the mapping cone of a tame embedding $S^1\hookrightarrow S^3$, and up to homotopy 
$\BB_2^\partial (D)\simeq S^3/S^1\simeq S^3\vee S^2$.
The homology of this space does not depend on the embedding and is given by
\[
\wt H_r(\BB_2^{\pa}(D);\Z)\cong H_r(S^3,S^1;\Z)\cong
\begin{cases}
\Z,& r=2,3,\\
0,& \text{otherwise}.
\end{cases}
\]
More on the homology of $\BB_l^\partial (D)$ in later sections.
\end{exa}

\begin{exa}\rm 
We can describe $\BB_3^\pa (M)$ similarly. A configuration in this space must have at most one point in $\mathring{M}$ (since its cost is $2$) or all three points in the boundary. This means that
$$\BB_3^\pa (M) = \BB^1_1(M)\cup \BB_3(\partial M)$$
Both factors intersect at 
$\BB^1_1(M)\cap \BB_3(\partial M) = \BB_2(\pa M)$. This is a pushout that can be conveniently described by a similar triangular diagram as in the previous example.
\end{exa}

\begin{rem}\rm 
Let \(m=\dim M\).  Then the stratum $\BB_{p,q}^{\circ}(M,\pa M)$ with \(p\) interior points and \(q\) boundary points is an open manifold of dimension
\[
pm+q(m-1)+(p+q-1)=p(m+1)+qm-1
\].
Under the constraint \(2p+q\le l\), this is maximized by \(p=0\), \(q=l\) when $m>1$.  Since $\BB_l^\pa (M)$ is a CW complex, it follows that as a CW complex, $\dim \BB_l^{\pa}(M)\le lm-1$ so that
\begin{equation*}
H_r(\BB_l^{\pa}(M);\F)=0\quad\text{for }r\ge lm
\end{equation*}
This vanishing is verified by our homology decomposition in \S\ref{sec:general-homology}.
\end{rem}

\section{The triangular meet-semilattice}
\label{subsec:filtration-graphics}

We reformulate Lemma \ref{lem:closed-strata} as a colimit of an explicit pushout diagram.
It comes in the form of an intersection lattice where the meet of any two subspace entries is their intersection. Remarkably, this lattice is a \textit{triangular meet-semilattice}, that is it satisfies the extra condition spelled out in Lemma \ref{meet}. The lattice has the triangular shape depicted in Fig. \ref{fig:triangular-filtration}.

\begin{lem}\label{lem:compatible-triangulations}
For a fixed order \(l\), the finite family consisting of the spaces
\(\BB_n(M)\), the closed strata \(\BB_q^p(M)\), and their intersections
admits compatible CW structures in which every inclusion occurring in
\(\mathcal D_l\) is the inclusion of a subcomplex. Consequently all these
inclusions are closed cofibrations, and the diagram \(\mathcal D_l\) is Reedy
cofibrant\footnote{Let $P$ be a poset. A diagram $
X\colon P\longrightarrow \mathbf{Top}
$
is Reedy cofibrant if every ``latching map'' $\colim_{\beta<\alpha}X_\beta\rightarrow X_\alpha$ is a  cofibration.}.
\end{lem}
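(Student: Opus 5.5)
The plan is to build one CW structure on the largest space \(\BB_l(M)\) in which every space of \(\mathcal D_l\) (the \(\BB_n(M)\) for \(n\le l\), the closed strata \(\BB_q^p(M)\) with \(2p+q\le l\), and their intersections) is a subcomplex. Everything is assembled from a single triangulation of the pair \((M,\pa M)\).

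First, fix a smooth triangulation \(K\) of \(M\) in which \(\pa M\) is a full subcomplex. Triangulations of smooth manifolds with boundary exist by Whitehead, and passing to a barycentric subdivision makes \(\pa M\) full. For each \(k\le l\), give \(M^k\times\Delta^{k-1}\) a simplicial structure that is invariant under \(\mathfrak S_k\) and contains all the following as subcomplexes:
\begin{itemize}
\item the diagonals \(\{x_i=x_j\}\);
\item the faces \(\{t_i=0\}\);
\item the subsets \(\{x_{i}\in\pa M\}\).
\end{itemize}
To get this, take the product cell structure from \(K\) and the standard simplex, subdivide it equivariantly, and refine further (for example by the ordered-simplicial/barycentric construction) so that the diagonals become subcomplexes. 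The point to check is that diagonals in a product of ordered simplicial complexes are subcomplexes after one barycentric subdivision. After this subdivision, \(\mathfrak S_k\) acts simplicially and regularly, so the orbit space \((M^k\times\Delta^{k-1})/\mathfrak S_k\) inherits a CW structure.

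Second, check that the identifications defining \(\BB_l(M)\) are cellular. Relation (3) glues the face \(\{t_k=0\}\) onto \(M^{k-1}\times\Delta^{k-2}\) via the projection. Relation (2) glues the diagonal \(\{x_{k-1}=x_k\}\) via addition of weights. To make both gluing maps cellular (indeed simplicial), build the structures inductively on \(k\): choose the subdivision of \(M^k\times\Delta^{k-1}\) so that its restrictions to the face and diagonal subcomplexes refine the pullback of the already-chosen structure on the lower level. Then \(\BB_n(M)\) is obtained from \(\BB_{n-1}(M)\) by a cellular pushout, and it is a subcomplex of \(\BB_l(M)\).

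Third, identify each \(\BB_q^p(M)\) as a union of closed cells. By the Lemma on the decomposition \(\BB_n(M)=\bigsqcup\BB^\circ_{p,q}\) and its closure statement, \(\BB_q^p(M)\) is the image of the subset of \(M^{p+q}\times\Delta^{p+q-1}\) where at least \(q\) of the \(x_i\) lie in \(\pa M\), taken over all orderings. This subset is a union of the subcomplexes \(\{x_i\in\pa M\}\) intersected over index sets of size \(q\), so it is a subcomplex. Its image in the quotient is therefore a subcomplex. Intersections of subcomplexes are subcomplexes, so every entry of \(\mathcal D_l\) and every inclusion between them is a subcomplex inclusion, hence a closed cofibration.

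For Reedy cofibrancy, each latching object \(\colim_{\beta<\alpha}X_\beta\) is a colimit of subcomplexes along subcomplex inclusions inside \(X_\alpha\). In the meet-semilattice, pairwise meets are intersections, so this colimit is simply the union \(\bigcup_{\beta<\alpha}X_\beta\subset X_\alpha\), which is again a subcomplex. The latching map is therefore a subcomplex inclusion and so a cofibration.

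The main obstacle is the second step: producing equivariant subdivisions compatible with both gluing relations at once, so that the quotient is genuinely CW. If the simplicial bookkeeping becomes unwieldy, a fallback is to use the fact that these are compact semialgebraic (or subanalytic) sets. After embedding \(M\) real-analytically and using the symmetric-product coordinates, the family is a finite collection of closed subanalytic sets, and Łojasiewicz–Hironaka triangulation gives a simultaneous triangulation of \(\BB_l(M)\) with each member as a subcomplex.
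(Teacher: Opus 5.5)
Your main line is the paper's sketch. You triangulate \((M,\pa M)\), put cell structures on the pieces \(M^k\times\Delta^{k-1}\) of the quotient model, and make the permutation, zero-weight and merge maps cellular. You then read off \(\BB_q^p(M)\) as the image of the subcomplex where at least \(q\) coordinates lie in \(\pa M\), which is a correct identification. Reedy cofibrancy follows because latching objects are unions of subcomplexes.

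The gap is Step 2, which the paper also only asserts, and the mechanism you give for it does not work.
\begin{itemize}
\item \emph{Refining the pullback does not give cellularity.} The forgetful map \(M^k\times\Delta^{k-2}\to M^{k-1}\times\Delta^{k-2}\) has fibre \(M\). A refinement may well have a vertex lying over the interior of a top cell of the target, and that vertex then lands outside the \(0\)-skeleton.
\item \emph{Step 3 needs more than cellularity.} Closed cells must map \emph{onto} subcomplexes; otherwise the image of \(\{\text{at least } q \text{ of the } x_i \text{ lie in } \pa M\}\) need not be a subcomplex.
\item \emph{Regularity conflicts with simplicial merging.} Requiring \(\mathfrak S_k\) to act regularly on all of \(M^k\times\Delta^{k-1}\) forces you to subdivide the simplex factor. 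For \(k\ge3\) the merge map \((t_{k-1},t_k)\mapsto t_{k-1}+t_k\) is then not simplicial for the barycentric subdivision: the barycenter of \([e_1,e_{k-1},e_k]\) goes to \((e_1+2e_{k-1})/3\), which is not a vertex. Taking \(x_1\ne x_{k-1}\) in the \(M\)-coordinates, the same example shows that the paper's ``common barycentric subdivision'' does not make merging cellular either.
\item \emph{The induction does not close.} Subdividing the target instead changes what was already fixed at level \(k-1\).
\end{itemize}

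What works is the following. Order \(K\) and give \(M^k\) the Eilenberg--Zilber triangulation of \(K^k\): the diagonals and the sets \(\{x_i\in\pa M\}\) are subcomplexes, \(\mathfrak S_k\) acts simplicially, and projections and diagonal maps are simplicial. Keep \(\Delta^{k-1}\) unsubdivided. The forget and merge maps are then products of simplicial maps, so closed cells go onto closed cells. Regularity is not needed: a permutation \(g\ne1\) mapping a simplex of \(K^k\) to itself fixes it pointwise, which puts that simplex on a diagonal. Hence \(\mathfrak S_k\) acts freely on the cells off \((\text{fat diagonal}\times\Delta^{k-1})\cup(M^k\times\pa\Delta^{k-1})\), and \(\BB_k(M)\) is \(\BB_{k-1}(M)\) with one cell attached per orbit.

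Your fallback is a genuinely different route, and it is sound once the ``symmetric-product coordinates'' are made precise. Since \(\BB_l\) and the strata depend on the pair only up to homeomorphism, replace \((M,\pa M)\) by a compact polyhedral pair \((|K|,|L|)\subset\R^N\). The moment map \(\mu\mapsto\bigl(\int x^\alpha\,d\mu\bigr)_{1\le|\alpha|\le 2l-1}\) is continuous and injective on \(\BB_l(|K|)\). Injectivity holds because, for a set \(S\) of at most \(2l\) points and \(s\in S\), a product of at most \(2l-1\) affine functions is nonzero at \(s\) and vanishes on \(S\setminus\{s\}\). So the moment map is an embedding. By Tarski--Seidenberg, the images of \(\BB_l\), of every \(\BB_q^p\), and of all their intersections are compact semialgebraic, so a single semialgebraic triangulation makes all of them subcomplexes.

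This route gives a short, complete existence proof for the whole finite family at once, with no bookkeeping of merge maps. The repaired cellular route instead gives explicit cells adapted to the symmetric-join model. That is closer to how the paper later uses subcomplexes such as \(\BB_1^{p-1}(M)*\BB_q(\pa M)\subset\BB_p(M)*\BB_q(\pa M)\).
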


\begin{proof}
(sketch, based on Appendix of \cite{malchiodi}). Choose a finite triangulation of the pair \((M,\pa M)\). For \(n\le l\), use
the standard finite symmetric-join cell structure associated with the quotient model
\[
\coprod_{k=1}^{n}M^k\times\Delta^{k-1}\longrightarrow\BB_n(M).
\]
After a common barycentric subdivision, the permutation actions, the face
maps obtained by deleting a zero weight, and the diagonal maps obtained by
merging coincident points are cellular. In this cell structure, the conditions
\emph{at most \(p\) interior support points} and \emph{at most \(p+q\) support
points} are unions of closed faces. Passing to the finite quotient gives
compatible CW structures on all \(\BB_q^p(M)\) and on their finite
intersections. The inclusion of a subcomplex is a closed
cofibration \cite[Chapter~5]{Strom2011}.
Finally, every latching space is a union of lower closed strata and hence a
subcomplex of the corresponding object, which is precisely Reedy
cofibrancy.
\end{proof}

Consider the  diagrams of spaces $\mathcal D_2,\mathcal D_3$ and $\mathcal D_4$ as in Figure \ref{fig:low-order-diagrams}. These diagrams keep track of the closure relations, and the arrows pointing up are cofibrations. All subspaces figuring in the diagrams are compact triangulable spaces 
and the colimit is identified in this case with the union of the displayed pieces glued along their displayed intersections. The colimits of these diagrams are $\BB_l^\pa (M)$, for $l=2,3,4$, and this is a restatement of the equality $\displaystyle
\BB_l^{\pa}(M)=\bigcup_{0<2p+q\le l}\BB_q^p(M)$ (Lemma \ref{lem:closed-strata}).

\begin{figure}[htbp]
\centering
\begin{tikzpicture}[>=Latex,every node/.style={font=\scriptsize},x=1.15cm,y=1.3cm]
\node at (-4.0,0.8) {$\mathcal D_2$};
\node (a) at (-5.2,0) {$\BB_2^0=\BB_2(\pa M)$};
\node (b) at (-3.4,0) {$\BB_0^1=\BB_1(M)$};
\node (c) at (-4.3,-0.8) {$\BB_1^0=\pa M$};
\draw[->] (c) -- (a); \draw[->] (c) -- (b);
\node at (0.2,0.8) {$\mathcal D_3$};
\node (d) at (-0.9,0) {$\BB_3^0$};
\node (e) at (0.9,0) {$\BB_1^1$};
\node (f) at (0,-0.8) {$\BB_2^0=\BB_2(\pa M)$};
\draw[->] (f) -- (d); \draw[->] (f) -- (e);
\node at (4.0,0.8) {$\mathcal D_4$};
\node (g) at (2.4,0) {$\BB_4^0$};
\node (h) at (4.0,0) {$\BB_2^1$};
\node (i) at (5.6,0) {$\BB_0^2=\BB_2(M)$};
\node (j) at (3.2,-0.8) {$\BB_3^0$};
\node (k) at (4.8,-0.8) {$\BB_1^1$};
\node (l) at (4.0,-1.6) {$\BB_2^0$};
\draw[->] (j) -- (g); \draw[->] (j) -- (h);
\draw[->] (k) -- (h); \draw[->] (k) -- (i);
\draw[->] (l) -- (j); \draw[->] (l) -- (k);
\end{tikzpicture}
\caption{The first three diagrams. By design, the colimit of
$\mathcal D_l$ is $\BB_l^\partial(M)$.}
\label{fig:low-order-diagrams}
\end{figure}

To construct the general diagram $\mathcal D_l$ for $l\geq 2$, we make the following useful observation 
\begin{equation}\label{formulaintersection}
\BB_q^p(M)\cap \BB_r^s(M)= \BB^{\min \{p,s\}}_{\min \{p+q,r+s\}-\min \{p,s\}} (M)
\end{equation}
Indeed, a configuration in $\BB_q^p(M)$ has at most \(p+q\) slots, at most \(p\) of which are in the interior, while a configuration in $\BB_r^s(M)$ has at most \(r+s\) slots, at most \(s\) of which are in the interior. A configuration in their intersection therefore has at most $\min\{p,s\}$ interior slots and at most $\min\{p+q,r+s\}$ slots in total.

To construct the meet-semilattice in general, we start with the maximal subspaces.
For \(l\ge1\), the space \(\BB_{2l}^{\pa}(M)\) is the union of its maximal subspaces
\[
\BB_{2l}^{0},\quad
\BB_{2l-2}^{1},\quad
\ldots,\quad
\BB_2^{l-1},\quad
\BB_0^l=\BB_l(M),
\]
This we take to be the top row of the diagram. The next row consists of the pairwise intersections
\[
\BB_{2l-1}^{0},\quad
\BB_{2l-3}^{1},\quad
\ldots,\quad
\BB_1^{l-1}
\]
with all lower rows obtained by continuing the same pattern. 
Clearly, the entries in a fixed row are of the form
$\BB_n^0,
\BB_{n-2}^1,
\BB_{n-4}^2,
\ldots,\quad
\BB_{n-2j}^j,\quad
\ldots$, and
the basic adjacent-intersection condition is
$
\BB_{n-2j}^{\,j}
\cap
\BB_{n-2j-2}^{\,j+1}
=
\BB_{n-2j-1}^{\,j}
$.

To obtain the full triangular meet-semilattice, it is enough to require the following interval-intersection property.

\begin{lem}\label{meet}
Repeated adjacent intersections satisfy
\[
\bigcap_{j=p}^{q}\BB_{n-2j}^{\,j}
=
\BB_{\,n-p-q}^{\,p},
\]
so that, equivalently,
$\BB_{n-2p}^{\,p}
\cap
\BB_{n-2q}^{\,q}
=
\BB_{\,n-p-q}^{\,p}$, $
p\le q$.
\end{lem}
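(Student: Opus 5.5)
The plan is to derive everything from the pairwise intersection formula \eqref{formulaintersection}, which describes $\BB_q^p(M)$ by exactly two constraints: at most $p$ interior slots and at most $p+q$ slots in total. First I will apply \eqref{formulaintersection} directly to the pair $\BB_{n-2p}^{\,p}$ and $\BB_{n-2q}^{\,q}$ with $p\le q$. The interior bound becomes $\min\{p,q\}=p$. The total-slot bounds are $p+(n-2p)=n-p$ and $q+(n-2q)=n-q$, so their minimum is $n-q$, since $q\ge p$. Hence the intersection is $\BB^{p}_{(n-q)-p}=\BB^p_{n-p-q}$, which is the second (equivalent) form of the claim.

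Next, for the interval intersection, I will describe each $\BB_{n-2j}^{\,j}$ as the set of $\mu\in\BB_{n-j}(M)$ with at most $j$ interior support points and at most $n-j$ support points in total. Intersecting over $j=p,\dots,q$ gives at most $\min_j j=p$ interior points and at most $\min_j(n-j)=n-q$ total points. By definition \eqref{eq:closed-strata}, this set is exactly $\BB^p_{n-p-q}(M)$. Equivalently, the two-term intersection $\BB_{n-2p}^{\,p}\cap\BB_{n-2q}^{\,q}$ already equals $\BB^p_{n-p-q}$, and the intermediate terms contain it. To see the containment, note that for $p\le j\le q$ we have $p\le j$ and $n-q\le n-j$, so $\BB^p_{n-p-q}\subset\BB^j_{n-2j}$ by the monotonicity of closed strata in both constraints. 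This gives the equivalence of the two displayed statements.

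The main point needing care is that \eqref{formulaintersection} is a set-theoretic statement about subspaces of a common ambient space. All the sets here are closed subspaces of $\BB_n(M)$ with the subspace topology, via the closed inclusions $\BB_m(M)\subset\BB_n(M)$ for $m\le n$. So the equalities hold as subspaces, and no topological subtlety arises.

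There is also a degenerate case to note: when $n-p-q<0$ the notation makes no sense. In the triangular diagram the indices satisfy $n-2q\ge0$, so $n-p-q\ge n-2q\ge0$ and the formula stays well defined. If $n-p-q=0$ and $p=0$, both sides are empty or omitted by convention.
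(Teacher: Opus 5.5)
Your proposal is correct and follows the paper's proof. The paper likewise notes that the two-term intersection $\BB_{n-2p}^{\,p}\cap\BB_{n-2q}^{\,q}$ and the full interval intersection both equal $\BB^{p}_{n-p-q}(M)$ by \eqref{formulaintersection}, that is, at most $p$ interior slots and at most $n-q$ slots in total. Your explicit bookkeeping of the minima and your check that $n-p-q\ge n-2q\ge0$ simply spell out that one-line argument in more detail.
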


\begin{proof}
We must check that
\[
\BB_{n-2p}^{\,p}
\cap
\BB_{n-2q}^{\,q}
=
\bigcap_{j=p}^{q}\BB_{n-2j}^{\,j},
\qquad
p<q.
\]
But both sides coincide with $\BB_{n-p-q}^p(M)$ according to \eqref{formulaintersection}.
\end{proof}

\begin{prop}\label{colimitdiagram}
The colimit diagram $\mathcal D_{2l}$ for $\BB_{2l}^{\pa}(M)$ is depicted below.  The diagram for \(\BB_{2l-1}^{\pa}(M)\) is obtained by truncating the top row. In all cases $\colim\mathcal D_l=\BB_l^\partial (M)$.
\begin{figure}[H]
\centering
\begin{tikzpicture}[
scale=0.9,
    transform shape,
    >=Latex,
    every node/.style={
        font=\small,
        inner sep=2pt
    },
    x=1.45cm,
    y=1.05cm,
    every path/.style={line width=0.6pt}
]

\node (a0) at (-4,0)
    {$\BB_{2l}^{0}=\BB_{2l}(\partial M)$};

\node (a1) at (-2,0)
    {$\BB_{2l-2}^{1}$};

\node (a2) at (0,0)
    {$\cdots$};

\node (a3) at (2,0)
    {$\BB_{2}^{\,l-1}$};

\node (a4) at (4,0)
    {$\BB_{0}^{\,l}=\BB_l(M)$};

\node (b0) at (-3,-1)
    {$\BB_{2l-1}^{0}$};

\node (b1) at (-1,-1)
    {$\cdots$};

\node (b2) at (1,-1)
    {$\BB_{3}^{\,l-2}$};

\node (b3) at (3,-1)
    {$\BB_{1}^{\,l-1}$};

\node (c0) at (0,-2)
    {$\ddots$};

\node (d0) at (-2,-3)
    {$\BB_{l+2}^{0}$};

\node (d1) at (0,-3)
    {$\BB_{l}^{1}$};

\node (d2) at (2,-3)
    {$\BB_{l-2}^{2}$};

\node (e0) at (-1,-4)
    {$\BB_{l+1}^{0}$};

\node (e1) at (1,-4)
    {$\BB_{l-1}^{1}$};


\node (f0) at (0,-5)
    {$\BB_{l}^{0}$};

\draw[->] (b0) -- (a0);
\draw[->] (b0) -- (a1);

\draw[->] (b2) -- (a2);
\draw[->] (b2) -- (a3);

\draw[->] (b3) -- (a3);
\draw[->] (b3) -- (a4);

\draw[->] (e0) -- (d0);
\draw[->] (e0) -- (d1);

\draw[->] (e1) -- (d1);
\draw[->] (e1) -- (d2);

\draw[->] (f0) -- (e0);
\draw[->] (f0) -- (e1);

\end{tikzpicture}
\caption{The triangular meet-semilattice $\mathcal D_{2l}$ whose colimit is $\BB_{2l}^{\pa}(M)$.}
\label{fig:triangular-filtration}
\end{figure}
\end{prop}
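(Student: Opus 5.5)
We need to show that the colimit of the triangular diagram $\mathcal D_{2l}$ (and of its truncation for odd order) is $\BB_l^\pa(M)$, with the diagram entries and maps as depicted. The plan is to identify the colimit with a union of closed subspaces glued along intersections, and then apply Lemma \ref{lem:closed-strata}.

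\textbf{Step 1: identify the objects.} First I check that the nodes of $\mathcal D_{2l}$ are exactly the spaces claimed. For $0\le j\le l$, the maximal closed strata of cost $\le 2l$ are $\BB^j_{2l-2j}(M)$. Indeed, any $\BB^p_q(M)$ with $2p+q\le 2l$ satisfies
\[
\BB^p_q(M)\subset \BB^p_{2l-2p}(M),
\]
since enlarging the total number of slots only enlarges the stratum. Hence, by Lemma \ref{lem:closed-strata}, $\BB^\pa_{2l}(M)=\bigcup_{j=0}^l \BB^j_{2l-2j}(M)$.

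For odd order $2l-1$, the maximal strata are $\BB^j_{2l-1-2j}(M)$ for $0\le j\le l-1$. These form the second row of $\mathcal D_{2l}$. So for $\BB^\pa_{2l-1}$ the top row is deleted and the second row becomes the new top row, which is the claimed truncation.

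\textbf{Step 2: identify the lower rows.} Formula \eqref{formulaintersection} and Lemma \ref{meet} show that the row below row $n$ consists of pairwise adjacent intersections:
\[
\BB^{j}_{n-2j}\cap\BB^{j+1}_{n-2j-2}=\BB^{j}_{n-2j-1}.
\]
These are precisely the entries of row $n-1$. Moreover, any intersection of a subfamily of the top row equals the intersection over the interval it spans, which is again an entry of the diagram. The diagram is therefore closed under finite intersections: it is the full intersection meet-semilattice of the cover by maximal strata, with inclusions as maps.

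\textbf{Step 3: compute the colimit.} For a finite cover of a space $X$ by closed subsets $A_1,\dots,A_m$, the colimit over the intersection poset of the $A_S=\bigcap_{i\in S}A_i$ is the quotient of $\bigsqcup A_i$ by the identifications along the pairwise intersections. The natural map from this colimit to $X$ is then:
\begin{itemize}
\item a continuous bijection, since the identifications along pairwise intersections are exactly those recorded by the diagram;
\item a homeomorphism, because a finite union of closed sets carries the weak topology: a set is closed iff its trace on each $A_i$ is closed.
\end{itemize}
Here each $\BB^p_q(M)$ is closed in $\BB_{2l}(M)$, being compact in a Hausdorff space. Also $\BB^\pa_{2l}(M)$ is topologized as a subspace of $\BB_{2l}(M)$, so the topology matches.

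The one subtlety is that the colimit is taken over the displayed diagram, which contains only the adjacent arrows and not every pairwise intersection. I handle this with a cofinality argument. The non-adjacent intersection $\BB^p_{n-2p}\cap\BB^q_{n-2q}$ is the lower node $\BB^p_{n-p-q}$. This node maps into all intermediate nodes through a zigzag of displayed arrows, so the identifications it imposes are already generated by the adjacent ones.

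\textbf{Main obstacle.} The step I expect to require the most care is this last point: showing that the displayed sub-diagram yields the same colimit as the full intersection poset. I would argue it by induction down the rows, using Lemma \ref{meet}. Optionally, Lemma \ref{lem:compatible-triangulations} lets one note that the colimit is also a homotopy colimit, though that is not needed for the statement itself.
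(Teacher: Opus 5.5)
Your proposal is correct and follows the same route as the paper, which simply declares the even case clear from Lemma \ref{lem:closed-strata}, formula \eqref{formulaintersection} and Lemma \ref{meet}. You supply the details it omits: closure of the diagram under intersections, the weak-topology argument for a finite closed cover, and the fact that composites of displayed arrows realize every inclusion.

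Your reading of the odd case, namely deleting the entire top row because every $\BB^{j}_{2l-2j}$ has cost $2l$, is the right one and matches the later identification $T_l^{(l-1)}=\BB_{2l-1}^{\pa}(M)$; the paper's one-line justification mentions only the absence of $\BB_0^l$.
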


\begin{proof}
For \(\BB_{2l}^{\pa}(M)\) this is clear. For \(\BB_{2l-1}^{\pa}(M)\), the maximal weighted cost is \(2l-1\); hence the piece \(\BB_0^l= \BB_l(M)\), which has cost \(2l\), is absent, and the corresponding lower intersections are also removed.  This corresponds to the truncated diagram.
\end{proof}

Proposition \ref{colimitdiagram} has two immediate and useful consequences: connectivity and the Euler characteristic computation. By Lemma \ref{lem:compatible-triangulations}, the diagram above is Reedy cofibrant, so the colimit is also the homotopy colimit (see the proof of Lemma \ref{colimitconnectivity}).

\subsection{Connectivity}
\label{sec:connectivity}

Recall that a space is \(r\)-connected if its homotopy groups vanish up to and including degree $r$, where $r\geq 0$. The space is simply connected if it is $1$-connected. The largest such $r$ is called the connectivity of $X$ and written $\operatorname{conn}(X)$. In the case $X$ is contractible, $\conn (X)=+\infty$.

We now review some basic facts on pushouts and pushout diagrams. The colimit of the simplest triangle has the following description
\begin{eqnarray*}B\sqcup_AC &=&  (B\sqcup C)\big/\bigl(f(a)\sim g(a),\ a\in A\bigr) \\
&=:&\colim(\xymatrix{C&A\ar[r]^g\ar[l]_f&B)}
\end{eqnarray*}
This is an identification space and topologists use the language of a ``pushout square'' or just ``pushout'' to describe this colimit
\begin{equation}\label{pushoutsquare}\xymatrix{A\ar[r]^f\ar[d]_g&B\ar[d]\\
C\ar[r]&D} \ \hbox{is a pushout square}\ \Longleftrightarrow D=\colim (\xymatrix{C&A\ar[r]^f\ar[l]_{g}&B)}
\end{equation}

When $(B,A)$ is an NDR pair\footnote{A Neighborhood Deformation Retract. This is equivalent to the inclusion $A\hookrightarrow B$ being a cofibration.}, this colimit has homotopy invariance, and in fact has the homotopy type of the double mapping cylinder associated to the maps $f$ and $g$. This double mapping cylinder construction is known as the ``homotopy colimit'' (or ``homotopy pushout'') and is written as $D=\hbox{hcolim}(C\leftarrow A\rightarrow B)$. So when either $(B,A)$ or $(C,A)$ is an NDR pair in the pushout square, both colim and hcolim (or double mapping cylinder) coincide, up to homotopy. General references on this material and on homotopy invariance are \cite{piacenza, Strom2011}. 

\begin{lem}\label{equivalencequotients} For a pushout square as in \eqref{pushoutsquare}, $A\hookrightarrow B$ and $C\hookrightarrow D$ cofibrations, we have an equivalence of quotient spaces
$\displaystyle D/C \simeq B/A$.
\end{lem}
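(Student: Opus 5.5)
The plan is to show that the map $B/A\to D/C$ induced by the pushout is a homeomorphism. Since both quotients are taken by cofibrations, they have the homotopy types of the corresponding cofibers, so this gives the stated equivalence.

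Write $f\colon A\to B$, $g\colon A\to C$, and let $j\colon B\to D$ be the canonical map. The composite $B\to D\to D/C$ sends $f(A)$ into $g(A)\subset C$, which collapses to the basepoint. Hence it factors through a continuous map $\bar\jmath\colon B/A\to D/C$.

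\emph{Bijectivity.} As a set, $D=B\sqcup C/(f(a)\sim g(a))$. Every point of $D$ not in the image of $C$ comes from a unique point of $B\setminus f(A)$, where we use that $f$ is an inclusion. Both $B/A$ and $D/C$ therefore consist of $B\setminus f(A)$ together with one basepoint, and $\bar\jmath$ is a bijection.

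\emph{Continuity of the inverse.} It suffices to check that $\bar\jmath$ is a quotient map. Let $U\subset D/C$ be a set whose preimage in $B/A$ is open. We must show that its preimage $V\subset D$ is open, which by the definition of the pushout topology means checking $V\cap B$ and $V\cap C$ separately. The set $V\cap C$ is either $C$ or $\varnothing$, according to whether $U$ contains the basepoint, so it is open. The set $V\cap B$ is the preimage of $U$ under $B\to B/A$, which is open by assumption. Hence $V$ is open, so $D/C\cong B/A$ is a homeomorphism and in particular a homotopy equivalence.

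\emph{Role of the cofibration hypothesis.} Since $A\hookrightarrow B$ is a cofibration, $B/A\simeq B\cup CA$, the mapping cone. Likewise $C\hookrightarrow D$ is a cofibration, and it is automatically one as the pushout of $A\hookrightarrow B$, so $D/C\simeq D\cup CC$. This guarantees that the strict quotient identification is also correct homotopically, which is how the lemma will be used later, for instance to compute $\widetilde H_*(D,C)\cong\widetilde H_*(B,A)$ by excision.

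The only delicate point is the topological one, namely that $\bar\jmath$ is a quotient map rather than merely a continuous bijection. In our applications every space is compact Hausdorff: the strata are compact by Lemma~\ref{lem:compatible-triangulations}, and their quotients by closed subcomplexes are Hausdorff. So one can also bypass the general argument, because a continuous bijection from a compact space to a Hausdorff space is a homeomorphism.
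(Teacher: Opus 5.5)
Your proof is correct, and it proves more than the lemma asks. The induced map $\bar\jmath\colon B/A\to D/C$ is a homeomorphism, and the only inputs are that $f$ is injective and that a subset of $D$ is open exactly when its preimages in $B$ and in $C$ are open.

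The paper gives no argument for this lemma; it records it as a standard fact about pushouts and cofibrations. The usual short proof is categorical. The square with vertices $C$, $D$, $\ast$, $D/C$ is a pushout by definition of $D/C$. Pasting it below the given pushout square shows that $D/C$ is the pushout of $\ast\leftarrow A\to B$, which is $B/A$.

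Your point-set verification is an unwinding of this pasting law. It is longer, but it makes explicit that the cofibration hypotheses play no role in the identification itself. They enter only when one reads the strict quotients as homotopy cofibers, which is what the later applications (e.g.\ Lemma~\ref{lem:row-quotient-splitting}) need. Your remark that the hypothesis on $C\hookrightarrow D$ is redundant is also right, since cofibrations are stable under cobase change.

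One cosmetic point concerns the notation $V\cap B$. In the paper's applications the map $A\to C$ is not injective: the join map $[\mu,\nu,s]\mapsto s\mu+(1-s)\nu$ identifies distinct points. Consequently $B\to D$ need not be injective either. So $V\cap B$ should be read as the preimage of $V$ in $B$, which is in fact how you use it.
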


The following result on building pushouts is also widely used in practice.

\begin{lem}\label{pushoutmethod}  Let \(i: A\hookrightarrow B\), let $f\colon B\longrightarrow D$ be a surjective quotient map, and $C\hookrightarrow D$ a subspace. Assume that $f^{-1}(C)=A$ and that the restriction $ f|_{B\setminus A}\colon B\setminus A\longrightarrow D\setminus C $ is injective. Then \[ D\cong \operatorname*{colim} \left( C\xleftarrow{\,f|_A\,}A\xrightarrow{\,i\,}B \right). \]
If $(B,A)$ is an NDR-pair, then $D$ is homotopy equivalent to the homotopy pushout.
\end{lem}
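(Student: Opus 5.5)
We have to prove Lemma \ref{pushoutmethod}: if \(f\colon B\to D\) is a surjective quotient map, \(C\subset D\) satisfies \(f^{-1}(C)=A\), and \(f\) is injective on \(B\setminus A\), then \(D\cong C\sqcup_A B\). If moreover \((B,A)\) is an NDR pair, \(D\) has the homotopy type of the homotopy pushout.

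The plan is to factor \(f\) through the pushout and show the comparison map is a homeomorphism. Let \(P=\colim(C\xleftarrow{f|_A}A\xrightarrow{i}B)=(C\sqcup B)/\!\sim\), where \(a\sim f(a)\). The maps \(C\hookrightarrow D\) and \(f\colon B\to D\) agree on \(A\), since \(f\circ i=f|_A\) followed by inclusion. By the universal property of the pushout they induce a continuous map \(\Phi\colon P\to D\). It then remains to show that \(\Phi\) is bijective and that it is a quotient map.

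\emph{Bijectivity.} Every class in \(P\) has a representative either in \(C\) or in \(B\setminus A\), because each point of \(A\) is identified with a point of \(C\).
\begin{itemize}
\item Surjectivity: a point of \(D\setminus C\) is \(f(b)\) for some \(b\), by surjectivity of \(f\), and \(b\notin A\) since \(f^{-1}(C)=A\). Points of \(C\) are hit by \(C\) itself.
\item Injectivity: \(\Phi\) is injective on \(C\) because it is the inclusion there. It is injective on \(B\setminus A\) by hypothesis. It never identifies a point of \(C\) with a point of \(B\setminus A\), because \(f(B\setminus A)\subset D\setminus C\), again by \(f^{-1}(C)=A\).
\end{itemize}

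\emph{Quotient property.} Let \(U\subset D\) with \(\Phi^{-1}(U)\) open in \(P\). Then the preimage of \(\Phi^{-1}(U)\) in \(B\) is open, and that preimage equals \(f^{-1}(U)\). Since \(f\) is a quotient map, \(U\) is open in \(D\). Thus \(\Phi\) is a continuous bijective quotient map, hence a homeomorphism. Note that the argument only tests openness through \(f\), so openness of \(C\cap U\) in \(C\) is never needed. It does, however, tacitly require \(C\) to carry the subspace topology, which is the natural reading of the hypotheses.

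In the compact Hausdorff situations used in this paper (\(B\) compact, \(D\) Hausdorff) there is a shortcut: \(P\) is compact as a quotient of \(C\sqcup B\) with \(C=f(A)\) compact, so \(\Phi\) is a homeomorphism as a continuous bijection from a compact space to a Hausdorff space. I will mention this as an alternative.

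For the homotopy statement, I would invoke the standard fact recalled just before Lemma \ref{equivalencequotients}. If \(A\hookrightarrow B\) is a cofibration (the NDR condition), the canonical map from the double mapping cylinder to the strict pushout is a homotopy equivalence; see \cite{Strom2011, piacenza}. Combined with \(D\cong P\), this gives the claim.

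The main obstacle is the quotient-map verification for \(\Phi\), specifically keeping track of the topology on \(C\); the rest is bookkeeping with \(f^{-1}(C)=A\).
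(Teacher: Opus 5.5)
Your proof is correct. The paper states this lemma without proof, as a standard fact about building pushouts, so there is no argument in the paper to compare yours against; what you give is the standard verification that the paper tacitly relies on.

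The homeomorphism part is complete:
\begin{itemize}
\item the universal property gives the comparison map $\Phi\colon C\sqcup_A B\to D$;
\item bijectivity follows from $f^{-1}(C)=A$ together with injectivity of $f$ on $B\setminus A$;
\item $\Phi$ is a quotient map because $f$ factors through it.
\end{itemize}
The homotopy statement is the usual gluing fact: when $A\hookrightarrow B$ is a cofibration, the double mapping cylinder maps onto the strict pushout by a homotopy equivalence.

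One small refinement of your compact--Hausdorff aside. Your justification uses compactness of $C$, which would need $A=f^{-1}(C)$ to be compact, for instance $C$ closed. You do not need this. Every class of $P$ meets $B$, because $f(A)=C$, so $P$ is a continuous image of $B$ and is compact whenever $B$ is. This is the same compact-to-Hausdorff device the paper uses in the L\'evy--Prokhorov comparison lemma and for the homeomorphism $\BB_p(M)/\BB_1^{p-1}(M)\cong\BB_p(M/\pa M)/W_0$.
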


Being a homotopy pushout allows us to derive most of the general connectivity properties listed below: 
\begin{itemize}
    \item If $X$ is $r$-connected, $r\geq 1$, then \(\BB_n(X)\) is \((2n+r-2)\)-connected \cite[Theorem 1.2]{KallelKaroui2011} 
    \item \(\conn(X*Y)\ge \conn(X)+\conn(Y)+2\)
    \item  Assume $(B,A)$ is an NDR pair, \(A\neq\varnothing\), and that the spaces have the homotopy type of CW-complexes. From  the long exact sequence of the pair \((B,A)\) one has that
\[\conn(B/A)\ge \min\{\conn (B),\conn (A)+1\}.
\]
\item Let $D$ be the homotopy pushout $D = \hbox{hcolim}(C\leftarrow A\rightarrow B)$. Using the two previous bullet points, we have
\begin{eqnarray}\label{connectivitypushout}
\conn(D)&\ge& 
\min\{\conn(C),\,\conn(D/C)\}\nonumber\\
&=& \min\{\conn(C),\,\conn(B/A)\}\\
&\geq&
\min\{\conn(C),\,\conn(B),\,\conn(A)+1\}\nonumber
\end{eqnarray}
\end{itemize}

\begin{prop}\label{connectivityBpq}
For \(p\ge1, q\ge 1\), there is a strict pushout square 
\begin{equation}\label{eq:Bpq-colimit}
\xymatrix{
\BB_1^{p-1}(M)*\BB_q(\pa M)
\ar[r]\ar[d]&
\BB_p(M)*\BB_q(\pa M)\ar[d]\\
\BB_{q+1}^{p-1}(M)\ar[r]&
\BB_q^p(M)}
\end{equation}
This is also a homotopy pushout. In particular, for all
\((p,q)\in(\N\cup\{0\})^2\) with \(p+q\ge1\),
$\BB_q^p(M)$ is at least $2(p+q)+r-2$-connected if both
$M$ and $\pa M$ are $r$-connected, $r\geq 1$.
\end{prop}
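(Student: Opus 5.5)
The plan is to realize $\BB_q^p(M)$ as a quotient of a join via the ``convex combination'' map, verify the hypotheses of Lemma \ref{pushoutmethod}, and then obtain the connectivity bound by induction on $p$ using \eqref{connectivitypushout}.

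\emph{The map.} Write points of the join $\BB_p(M)*\BB_q(\pa M)$ as triples $(\mu,\nu,s)$ with $s\in[0,1]$; the usual collapses occur at $s=0$ and $s=1$. Define
\[
f\colon \BB_p(M)*\BB_q(\pa M)\longrightarrow \BB_q^p(M),\qquad (\mu,\nu,s)\longmapsto (1-s)\mu+s\nu .
\]
This is well defined and continuous. The image has at most $p+q$ support points, and its interior support is contained in that of $\mu$, so it has at most $p$ interior points. Continuity is automatic from the quotient models, or from the weak$^*$ description used in the Lemma comparing the quotient and Lévy--Prokhorov topologies.

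\emph{Surjectivity.} Let $\sigma\in\BB_q^p(M)$.
\begin{itemize}
\item If $\sigma$ has exactly $p$ interior support points, then it has at most $q$ boundary points. Take $\mu$ to be its normalized interior part, $\nu$ its normalized boundary part, and $s$ the total boundary mass.
\item Otherwise $\sigma$ has at most $p-1$ interior points and at most $p+q$ points in total. Then $\sigma\in\BB_1^{p}(M)\cap\{\text{interior count}\le p-1\}=\BB_{q+1}^{p-1}(M)$, which is the image of the $s=0$ end, so it is hit as well.
\end{itemize}

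\emph{Preimage of $C=\BB_{q+1}^{p-1}(M)$.} Suppose $s<1$. The interior support of $(1-s)\mu+s\nu$ equals that of $\mu$, and the total count never exceeds $p+q$. Hence the image lies in $C$ iff $\mu$ has at most $p-1$ interior points, i.e. $\mu\in\BB_1^{p-1}(M)$. The $s=1$ end maps to $\BB_q(\pa M)\subset C$ and lies in $A=\BB_1^{p-1}(M)*\BB_q(\pa M)$ anyway. So $f^{-1}(C)=A$.

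\emph{Injectivity off $A$.} A point outside $C$ has exactly $p$ interior points. Any preimage must then have $\mu\in\BB_p(M)$ supported exactly on those $p$ interior points. It follows that $\mu$ is the normalized interior part, $\nu$ the normalized boundary part, and $s$ the boundary mass, so the preimage is unique. This is the step I regard as the main point to check carefully. One must ensure that no boundary mass can be ``hidden'' inside $\mu$, which holds precisely because $\mu$ has only $p$ slots and all of them are used in the interior. One must also handle the degenerate cases $s\in\{0,1\}$ correctly.

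\emph{Quotient map and pushout.} Since the join is compact and $\BB_q^p(M)$ is Hausdorff, $f$ is a closed surjection, hence a quotient map. Lemma \ref{pushoutmethod} then gives the strict pushout \eqref{eq:Bpq-colimit}. By Lemma \ref{lem:compatible-triangulations}, with cellular join structures, $A\hookrightarrow \BB_p(M)*\BB_q(\pa M)$ is a subcomplex inclusion, hence an NDR pair. Therefore the square is also a homotopy pushout.

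\emph{Connectivity.} Prove the bound by induction on $p$, for all $q$ simultaneously.
\begin{itemize}
\item Base cases: $\BB_q^0(M)=\BB_q(\pa M)$ and $\BB_0^p(M)=\BB_p(M)$ are $(2q+r-2)$- and $(2p+r-2)$-connected by \cite[Theorem 1.2]{KallelKaroui2011}. In particular $\BB_1^0=\pa M$ is $r$-connected.
\item Inductive step, $p,q\ge1$: apply \eqref{connectivitypushout} to the square. By induction, $C=\BB_{q+1}^{p-1}$ is $(2(p+q)+r-2)$-connected.
\item Using $\conn(X*Y)\ge\conn X+\conn Y+2$, the join $\BB_p(M)*\BB_q(\pa M)$ is at least $(2p+2q+2r-2)$-connected.
\item By induction $\BB_1^{p-1}$ is $(2p+r-2)$-connected, so $A$ is at least $(2p+2q+2r-2)$-connected, and $\conn A+1$ is larger still.
\end{itemize}
Since $r\ge1$, every term in the minimum is at least $2(p+q)+r-2$, which closes the induction.
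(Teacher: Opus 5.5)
Your architecture matches the paper's. You use the same convex-combination map on the join, with $s$ and $1-s$ swapped. You verify the hypotheses of Lemma~\ref{pushoutmethod} in the same way, and you run the same induction on $p$ via \eqref{connectivitypushout}. Your computation of $f^{-1}(C)=A$ and of injectivity off $A$ is correct, and more explicit than the paper's.

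\textbf{Error in the surjectivity step.} In the second case you claim that $\BB_{q+1}^{p-1}(M)$ is the image of the $s=0$ end, and this is false. That end is a copy of $\BB_p(M)$ mapped identically, so its image consists only of configurations with at most $p$ support points. But $\BB_{q+1}^{p-1}(M)$ contains configurations with up to $p+q$ support points, for instance $p+q$ distinct boundary points. For $q\ge1$ these are not accounted for. (The set $\BB_1^{p}(M)$ in that line should also read $\BB_q^p(M)$.)

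The repair is a splitting argument. Suppose $\sigma$ has $k\le p-1$ interior points and $m$ boundary points, with $k+m\le p+q$. Assign to $\mu$ the $k$ interior points together with $\min\{m,p-k\}$ of the boundary points. Assign to $\nu$ the remaining $m-\min\{m,p-k\}\le q$ boundary points. Let $\mu$ and $\nu$ be the normalized restrictions of $\sigma$ to these sets, with $\nu$ arbitrary if its set is empty, and let $s$ be the $\sigma$-mass of the $\nu$-points. Then $\mu\in\BB_1^{p-1}(M)$, $\nu\in\BB_q(\pa M)$, and $f(\mu,\nu,s)=\sigma$. This proves $f(A)=C$, which is exactly what you need.

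Alternatively, the image of $f$ is compact, hence closed. It contains $\BB_{p,q}^{\circ}(M,\pa M)$, whose closure is $\BB_q^p(M)$ by the decomposition lemma of Section~\ref{sec:def}, so $f$ is surjective.

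\textbf{A different route in the connectivity step.} Here you take a genuinely different shortcut, and it is valid. The paper bounds $\conn(\BB_1^{p-1}(M))$ through the long exact sequence of the pair $(\BB_p(M),\BB_1^{p-1}(M))$. It uses the identification \eqref{quotient} of the quotient with $\BB_p(M/\pa M)$, and this yields only $2p+r-3$. You observe instead that the induction hypothesis $P(p-1)$ is quantified over all $q$. Its instance $q=1$ therefore gives $\conn(\BB_1^{p-1}(M))\ge 2p+r-2$ directly.

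Your route is shorter, avoids $M/\pa M$ entirely, and improves the bound on $A$ to $2(p+q)+2r-2$. Either bound closes the induction. The paper's detour is not wasted, however, since \eqref{quotient} is reused in Lemma~\ref{lem:row-quotient-splitting}.
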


\begin{proof}
If $p=0$, $\BB_q^0(M) = \BB_q(\pa M)$ has the right connectivity, and if $q=0$, $\BB_0^p(M)=\BB_p(M)$ also has the predicted connectivity. We then assume $p\geq 1, q\geq 1$, and consider the map 
\begin{eqnarray*}
f: \BB_p(M)*\BB_q(\pa M)&\longrightarrow& \BB_q^p(M)\\
\ [\mu,\nu, s]&\longmapsto& s\mu + (1-s)\nu
\end{eqnarray*}
This is a well-defined surjective quotient map, and it is injective on the complement of $\BB^{p-1}_1(M)*\BB_q(\partial M)$
in $ \BB_p(M)*\BB_q(\partial M)$. Indeed, outside of $\BB^{p-1}_1(M)*\BB_q(\partial M)$, the interior barycenter has exactly $p$ distinct interior support points; hence the interior and boundary parts of the resulting barycenter are uniquely determined, and f is injective there.
Moreover, $f^{-1}(\BB^{p-1}_{q+1}(M))$ is precisely
$\BB^{p-1}_1(M)*\BB_q(\partial M)$. We can now apply
Lemma \ref{pushoutmethod} to conclude that  \eqref{eq:Bpq-colimit} is a pushout square. Since this is also a (homotopy) pushout, we can invoke \eqref{connectivitypushout} to deduce the inequality
\[
\begin{aligned}
\conn(\BB^p_q(M))\geq \min\bigl\{&\conn(\BB^{p-1}_{q+1}(M)),
\conn(\BB_p(M)*\BB_q(\pa M)),\\
&\conn(\BB_1^{p-1}(M)*\BB_q(\pa M))+1\bigr\}.
\end{aligned}
\]
We will evaluate each connectivity. We have that
$$\conn (\BB_p(M)*\BB_q(\partial M))\geq (2p+r-2) + (2q+r-2) + 2= 2(p+q)+ 2r-2$$
We proceed by induction. The first step is to estimate connectivity of $ \BB_1^{p-1}(M)\subset \BB_p(M)$. For a general closed pair $(X,A)$, we will write the image of $x$ in $X/A$ by $[x]$. Let $W_0$ be the subspace of configurations in $\BB_p(M/\pa M)$ containing the basepoint $x_0\in M/\pa M$. We have a well-defined map
\begin{eqnarray*}
\BB_p(M)/\BB_1^{p-1}(M)&\longrightarrow&
\BB_p(M/\partial M)/W_0\\
\left[\sum \alpha_ix_i\right]&\longmapsto&
\left[\sum \alpha_i[x_i]\right]
\end{eqnarray*}
and this is a bijection between compact Hausdorff spaces, so it is a homeomorphism.
But for a path-connected space, $W_0$ is contractible by the contraction 
$$I\times W_0\longrightarrow W_0\ ,\ 
\left(t, sx_0 + \sum t_iy_i\right)\longmapsto \left(s + t\sum t_i\right)x_0 + (1-t)\sum t_iy_i$$
This is well-defined since
the coefficients sum up to one, and it is a contraction.
Since $W_0$ is contractible, and $(\BB_p(M/\pa M),W_0)$ is an NDR pair, we have the homotopy equivalence
\begin{equation}\label{quotient} \BB_p(M)/\BB_1^{p-1}(M)\simeq 
\BB_p(M/\partial M)
\end{equation}
For \(p=1\), both sides are exactly the quotient \(M/\pa M\).

We assumed that $M$ and $\partial M$ are $r$-connected, which implies by the long exact sequence in homotopy groups that $M/\pa M$ is also $r$-connected, and thus $\BB_p(M/\pa M)$ is $2p+r-2$-connected. Since $\BB_p(M)$ is also $2p+r-2$-connected, it follows that
$$\BB_1^{p-1}(M)\ \hbox{is (at least) $2p+r-3$-connected}$$
and thus
$\BB_1^{p-1}(M)*\BB_q(\pa M)$
is at least $(2p+r-3) + (2q+r-2)+2 = 2(p+q)+2r-3$-connected.
We now have all the ingredients to apply inequality \eqref{connectivitypushout}. Set
\[
A=\BB_1^{p-1}(M)*\BB_q(\partial M),\qquad
B=\BB_p(M)*\BB_q(\partial M),
\]
\[
C=\BB_{q+1}^{p-1}(M),\qquad
D=\BB_q^p(M).
\]
and assume the induction statement
$$P(p):\conn(\BB_q^p(M))\ge 2(p+q)+r-2\ \hbox{for every}\ q\geq 0$$ This statement is true for $p=0$ since $\BB_q^0(M)=\BB_q(\partial M)$ and $\partial M$ is $r$-connected.
We can assume $P(p-1)$ is true, and use \eqref{eq:Bpq-colimit} to conclude $P(p)$. More precisely,
substitute the connectivity estimates already established to the spaces above
\[
\conn(A)
=
\conn\!\left(\BB_1^{p-1}(M)*\BB_q(\partial M)\right)
\ge
2(p+q)+2r-3.
\]
\[
\conn(B)
=
\conn\!\left(\BB_p(M)*\BB_q(\partial M)\right)
\ge
2(p+q)+2r-2.
\]
and
$
\conn(C)
=
\conn\!\left(\BB_{q+1}^{p-1}(M)\right)
\ge
2(p+q)+r-2
$ by the induction hypothesis.
Therefore,
\[
\begin{aligned}
\conn(D)
&\ge
\min\Bigl\{
2(p+q)+r-2,\,
2(p+q)+2r-2,\,
2(p+q)+2r-2
\Bigr\}\\
&=
2(p+q)+r-2,
\end{aligned}
\]
provided \(r\ge 1\). This is our claim.
\end{proof}

We will now derive our connectivity result from the meet-semilattice description of the colimit diagram for $\BB_l^\pa(M)$. This can be done by analyzing filtration terms and their quotients. We opt to use a more general but very useful result which we state next for convenience.

\begin{lem}\label{colimitconnectivity}
Let \(P\) be a finite non-empty meet-semilattice, regarded as a category, and let
$
X\colon P\longrightarrow \mathbf{Top}
$
be a diagram that is Reedy cofibrant. Assume that all the
spaces \(X_\alpha\) have the homotopy type of CW-complexes. Then
\[
\conn\left(\operatorname*{colim}_{\alpha\in P}X_\alpha\right)
\geq
\min_{\alpha\in P}\conn(X_\alpha).
\]
\end{lem}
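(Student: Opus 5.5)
We prove Lemma \ref{colimitconnectivity} by induction on the cardinality of \(P\). We use Reedy cofibrancy to replace colimits by homotopy colimits, and then reduce each step to the pushout estimate \eqref{connectivitypushout}. Set \(c:=\min_{\alpha\in P}\conn(X_\alpha)\). If \(|P|=1\) there is nothing to prove. If \(P\) has a maximum element \(\top\), then \(\colim X=X_\top\) and the bound is immediate. So the interesting case is when \(P\) has several maximal elements, which is exactly the case of the triangular diagrams \(\mathcal D_l\).

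\textbf{Induction step.} Choose a maximal element \(\omega\in P\) and put \(P'=P\setminus\{\omega\}\). The set \(P'\) is again a finite meet-semilattice: meets of elements of \(P'\) never equal \(\omega\), since \(\omega\) is maximal and not below any other element. The latching object at \(\omega\) is
\[
L_\omega:=\colim_{\beta<\omega}X_\beta .
\]
The down-set \(P_{<\omega}\) is again a finite meet-semilattice (nonempty unless \(\omega\) is isolated). An isolated \(\omega\) cannot occur in a meet-semilattice with at least two elements, since \(\omega\wedge\beta<\omega\) for any \(\beta\ne\omega\).

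Gluing in the top cell gives a strict pushout
\[
\colim_P X\;=\;\colim\Bigl(\colim_{P'}X\longleftarrow L_\omega\longrightarrow X_\omega\Bigr).
\]
The right-hand map \(L_\omega\to X_\omega\) is a cofibration by Reedy cofibrancy. By the discussion preceding Lemma \ref{equivalencequotients}, this square is therefore a homotopy pushout.

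\textbf{Applying the estimates.} Reedy cofibrancy of the restricted diagrams, together with the CW hypothesis, lets us apply the induction hypothesis to \(P'\) and to \(P_{<\omega}\). This gives \(\conn(\colim_{P'}X)\ge c\) and \(\conn(L_\omega)\ge c\). Now \eqref{connectivitypushout} yields
\[
\conn(\colim_PX)\ge\min\{\conn(\colim_{P'}X),\ \conn(X_\omega),\ \conn(L_\omega)+1\}\ge c .
\]

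\textbf{Main obstacle.} The delicate point is the bookkeeping behind this induction. First, we must show that the pushout above is genuinely the colimit over \(P\). Second, we must show that the restricted diagrams on \(P'\) and \(P_{<\omega}\) remain Reedy cofibrant with CW-type objects. For the first claim, we would check that any cocone on \(P'\) together with \(X_\omega\), compatible on \(P_{<\omega}\), is precisely a cocone on \(P\). This holds because every arrow into \(\omega\) comes from \(P_{<\omega}\), and there are no arrows out of \(\omega\).

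For the second claim, the latching maps of the restricted diagrams are among those of \(X\). This uses that \(P'\) and \(P_{<\omega}\) are down-closed in the relevant sense, so latching objects are unchanged. The colimits \(\colim_{P'}X\) and \(L_\omega\) have CW homotopy type because they are iterated pushouts along cofibrations of CW-type spaces; in our application the objects are in fact subcomplexes by Lemma \ref{lem:compatible-triangulations}.

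One further caveat concerns the case where \(L_\omega\) is empty or \((-1)\)-connected. This cannot happen here, since meets always exist and the objects are nonempty. If it did arise, it would be handled by the convention \(\conn(\varnothing)=-2\), which keeps the estimate valid.
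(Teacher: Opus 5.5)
Your proof is correct, and it follows a genuinely different route from the paper's.

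The paper does not induct. It identifies \(\colim X\) with \(\operatorname*{hocolim} X\) using Reedy cofibrancy. It then compares \(\operatorname*{hocolim} X\) with the homotopy colimit of the constant one-point diagram, which is \(|N(P)|\). Each map \(X_\alpha\to *\) is \((r+1)\)-connected, so the connectivity lemma for realizations of simplicial spaces (Ebert--Williams) makes \(\operatorname*{hocolim}X\to|N(P)|\) an \((r+1)\)-connected map. Finally, \(|N(P)|\) is contractible because the least element of \(P\) is initial.

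You instead induct on \(|P|\). You peel off a maximal element \(\omega\) and write \(\colim_P X\) as a strict pushout of \(\colim_{P'}X\leftarrow L_\omega\rightarrow X_\omega\) along the latching cofibration. You then feed in the pushout estimate \eqref{connectivitypushout}. Your bookkeeping is sound:
\begin{itemize}
\item \(P'\) and \(P_{<\omega}\) are down-closed, so their latching objects, and hence Reedy cofibrancy, are inherited.
\item Both are again meet-semilattices.
\item \(P_{<\omega}\) is nonempty because \(\omega\wedge\beta<\omega\) for any \(\beta\neq\omega\).
\item Intermediate colimits have CW homotopy type, as homotopy pushouts of CW-type spaces.
\end{itemize}

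The paper's argument is shorter and more general: it uses only that the indexing poset has contractible nerve. It therefore gives the same bound for any finite poset with contractible nerve, where your maximal-element peeling would not obviously survive. The price is heavier machinery: the full colim/hocolim comparison and an external connectivity lemma.

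Your argument stays entirely within the paper's own toolkit. It uses only strict pushouts along cofibrations and the estimate \eqref{connectivitypushout}, and it needs the homotopy-invariance of colimits only for single pushouts. It also shows exactly where the meet-semilattice hypothesis enters: it guarantees \(L_\omega\neq\varnothing\) and keeps the induction inside the class of meet-semilattices. Without that, the term \(\conn(L_\omega)+1\) would break the bound, as the discrete two-point poset shows.
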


\begin{proof}
Since the diagram is Reedy cofibrant, the canonical map
$
\operatorname*{hocolim}_{\alpha\in P}X_\alpha
\longrightarrow
\operatorname*{colim}_{\alpha\in P}X_\alpha
$
is a weak homotopy equivalence, and since all spaces are of the homotopy type of CW complex, it is an actual homotopy equivalence \cite{piacenza}.
Put
$
r=\min_{\alpha\in P}\conn(X_\alpha)
$.
The natural transformation \(X\to *\) induces a map
\[
\Phi : \operatorname*{hocolim}_{\alpha\in P}X_\alpha
\longrightarrow
\operatorname*{hocolim}_{\alpha\in P}*
\simeq |N(P)|.
\]
Each map $X_\alpha\rightarrow *$ is $r+1$-connected, and thus
by the connectivity theorem for geometric realizations of simplicial
spaces (see \cite{ebertwilliams}, Lemma 2.4), the map $\Phi$ is \((r+1)\)-connected.
Since \(P\) is a finite meet-semilattice, it has a least element, namely
the meet of all its elements. If we view $P$ as a small category, then this least element is an initial element. This implies that the classifying space \(|N(P)|\) is contractible. It
follows that
$
\operatorname*{hocolim}_{\alpha\in P}X_\alpha
$
is at least \(r\)-connected, and hence so is $
\operatorname*{colim}_{\alpha\in P}X_\alpha$.
\end{proof}

We will apply this lemma to our finite meet-semilattice. Another proof would filter the space by level height (see \eqref{filtration}), obtain a sequential colimit, and compute the connectivity of consecutive quotients.

\begin{cor}\label{prop:connectivity} (Theorem \ref{thm:intro-euler})
Assume that both \(M\) and \(\pa M\) are \(r\)-connected, $r\geq 1$. Then 
\begin{equation}\label{eq:connectivity-corollary}
\BB_l^{\pa}(M)\text{ is at least}\ \begin{cases}(l+r-2)\text{-connected}& \hbox{if $l$ even}\\
(l+r-1)\text{-connected}& \hbox{if $l$ odd}
\end{cases}
\end{equation} 
The space $\BB_l^\pa (M)$ is simply connected as soon as $l\ge 2$.
\end{cor}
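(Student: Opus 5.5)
The plan is to feed the triangular diagram of Proposition \ref{colimitdiagram} into Lemma \ref{colimitconnectivity}, using Proposition \ref{connectivityBpq} to bound the connectivity of every vertex. The hypotheses of Lemma \ref{colimitconnectivity} are available from earlier results:
\begin{enumerate}
\item the index poset of $\mathcal D_l$ is a finite meet-semilattice, since meets are intersections and are computed inside the family by \eqref{formulaintersection} and Lemma \ref{meet};
\item $\mathcal D_l$ is Reedy cofibrant by Lemma \ref{lem:compatible-triangulations};
\item all entries are finite CW complexes.
\end{enumerate}
Hence $\conn(\BB_l^{\pa}(M))\ge \min_\alpha \conn(X_\alpha)$. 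By Proposition \ref{connectivityBpq}, $\conn(\BB^p_q(M))\ge 2(p+q)+r-2$, so it suffices to find the minimum of $p+q$ over the entries of the diagram.

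\textbf{Even case $l=2m$.} The entries of $\mathcal D_{2m}$ are $\BB^{j}_{n-2j}(M)$ with $m\le n\le 2m$ and $0\le j\le n-m$. This index range is read off from the rows of Figure \ref{fig:triangular-filtration}, and I would double-check it against the intersection formula \eqref{formulaintersection}. For such an entry, $p+q=n-j\ge m$, with equality at the bottom vertex $\BB_m^0$ and at the top-right vertex $\BB_0^m$. So every entry is at least $(2m+r-2)$-connected, that is, $(l+r-2)$-connected.

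\textbf{Odd case $l=2m-1$.} Deleting the top row and $\BB_l(M)$ leaves top row $\BB^j_{2m-1-2j}$ for $0\le j\le m-1$. The lower rows are the iterated meets; by Lemma \ref{meet} these are $\BB^{j}_{n-2j}$ with $n\le 2m-1$ and again $j\le n-m$. The least element is $\BB^0_{2m-1-(m-1)}=\BB^0_m$. Thus $p+q=n-j\ge m$ throughout, and the bound is $2m+r-2=l+r-1$.

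The step needing most care is the odd-case truncation. I must confirm that the truncated index set is still closed under the meets given by \eqref{formulaintersection}, so that it is genuinely a meet-semilattice with colimit $\BB^{\pa}_{2m-1}(M)$. I must also confirm that its least element is $\BB^0_m$ and not something smaller, since it is this element that produces the extra $+1$ over the even case. I would verify this directly: for $j\le j'$ in the top row, $\BB^{j}_{2m-1-2j}\cap\BB^{j'}_{2m-1-2j'}=\BB^{j}_{2m-1-j-j'}$, whose $p+q$ equals $2m-1-j'\ge m$.

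Finally, simple connectivity for $l\ge2$ follows from these bounds with $r\ge1$. For even $l\ge2$ the bound is $l+r-2\ge1$, and for odd $l\ge3$ it is $l+r-1\ge3$.
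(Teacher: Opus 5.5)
Your proposal is correct and is essentially the paper's own proof: you apply Lemma \ref{colimitconnectivity} to the triangular diagram (truncated in the odd case), bound each vertex $\BB^p_q(M)$ below by $2(p+q)+r-2$ via Proposition \ref{connectivityBpq}, and note that the minimum $p+q=m$ gives $2m+r-2$, which equals $l+r-2$ for $l=2m$ and $l+r-1$ for $l=2m-1$. Two minor points: equality $p+q=m$ holds along the whole right edge $\BB^{n-m}_{2m-n}(M)$, not only at the two corner vertices, and in the odd case the vertex removed with the top row is $\BB^m_0=\BB_m(M)$, not $\BB_l(M)$.
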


\begin{proof}
Consider the triangular meet-semilattice diagram $\mathcal D_{2l}$ in Fig. \ref{fig:triangular-filtration} describing $\BB_{2l}^\pa (M)$.
It is Reedy cofibrant because
$L_\alpha\mathcal D_{2l} = \bigcup_{\beta < \alpha} X_\beta\subset X_\alpha$ is a subcomplex, for every $X_\alpha\in\mathcal D_{2l}$.
Now both \(M\) and \(\pa M\) are \(r\)-connected,
so each of the constituent subspaces $\BB_q^p(M)$ is $2(p+q)+r-2$-connected by Proposition \ref{connectivityBpq}.
By Lemma \ref{colimitconnectivity}, the connectivity of the colimit is at least the minimum connectivity of its constituent spaces
and so is at least $2(p+q)+r-2$ connected as well. 
The smallest such bound occurs when $p+q=l$ along the right-hand boundary of the diagram, and hence
\(\conn (\BB_{2l}^\pa (M))\geq 2l+r-2\). For the truncated odd diagram the same argument gives
\(\conn (\BB_{2l-1}^\pa (M))\ge 2l+r-2\). Replacing \(2l\) and \(2l-1\) by a single order variable gives exactly \eqref{eq:connectivity-corollary}.

Finally, if $l\ge 3$, obviously $l+r-2>0$ and $\BB_l^\pa (M)$ is simply connected. If $l=2$ and $r\geq 1$, the bound $l+r-2\geq 1$ and simple connectivity holds as well.
\end{proof}

\begin{rem}\rm 
    When $l=2$ and $\partial M$ is not simply connected, $\BB_l^\pa (M)=\BB_2^\pa (M)$ is not necessarily simply connected anymore. Let $M=T^\circ $ be the torus minus a small closed disk. Then $\pa M\cong S^1$. We have the pushout diagram
    \begin{eqnarray*}\BB_2^\pa (T^\circ) &=&\colim (\xymatrix{
\BB_2^0=\BB_2(\pa T^\circ )&
\BB_1^0= S^1 \ar[l]\ar[r]&
\BB_0^1=T^\circ})\\
&=&\colim (\xymatrix{
S^3& S^1 \ar[l]\ar[r]&T^\circ
})
\end{eqnarray*}
    and by an application of Seifert--van Kampen, $\pi_1(\BB_2^\pa (T^\circ))\cong\Z\times\Z$.
\end{rem}

\subsection{Euler characteristic in even dimension}
\label{sec:euler}

We apply inclusion--exclusion to the cover by the maximal entries in the top
row. By Lemma \ref{meet}, the intersection indexed by a nonempty set \(S\) of
top-row entries depends only on the interval from \(\min S\) to \(\max S\).
Fix an interval with \(d\) gaps. If \(d=0\), its coefficient is \(1\); if
\(d=1\), its coefficient is \(-1\). For \(d\ge2\), summing over all choices of
the \(d-1\) intermediate entries gives
\[
-\sum_{j=0}^{d-1}(-1)^j\binom{d-1}{j}=-(1-1)^{d-1}=0.
\]
Thus only the top row and the adjacent-pair row survive. If \(T_r\) denotes
the sum of the Euler characteristics of all entries in row \(r\), starting
with the maximal row \(r=l+1\) (respectively \(r=l\)) for \(\mathcal D_{2l}\)
(respectively \(\mathcal D_{2l-1}\)), then
\[
\chi (\colim \mathcal D_{2l}) = T_{l+1}-T_l,
\qquad
\chi (\colim \mathcal D_{2l-1}) = T_l-T_{l-1}.
\]

\begin{cor}\label{lem:euler-sum}
For \(l\ge1\),
\begin{align*}
\ch(\BB_{2l}^{\pa}(M))
&=
\sum_{i=0}^{l}\ch(\BB_{2l-2i}^{i})-
\sum_{i=0}^{l-1}\ch(\BB_{2l-1-2i}^{i}),\\
\ch(\BB_{2l-1}^{\pa}(M))
&=
\sum_{i=0}^{l-1}\ch(\BB_{2l-1-2i}^{i})-
\sum_{i=0}^{l-2}\ch(\BB_{2l-2-2i}^{i}).
\end{align*}
\end{cor}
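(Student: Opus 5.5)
Because the diagram of Proposition \ref{colimitdiagram} is a cover of \(\BB_{2l}^{\pa}(M)\) (resp.\ \(\BB_{2l-1}^{\pa}(M)\)) by closed subcomplexes, the plan is the inclusion--exclusion formula for Euler characteristics of finite CW complexes, organized by the interval structure of Lemma \ref{meet}. By Lemma \ref{lem:compatible-triangulations} all the \(\BB_q^p(M)\) and their intersections are subcomplexes of a common finite CW structure. Hence for a union \(X=\bigcup_{\alpha}X_\alpha\) of finitely many subcomplexes, counting cells gives
\[
\chi(X)=\sum_{\varnothing\ne S}(-1)^{|S|+1}\chi\Bigl(\bigcap_{\alpha\in S}X_\alpha\Bigr).
\]
We apply this to the top row: \(X_i=\BB_{2l-2i}^{i}\) for \(0\le i\le l\) in the even case, and \(X_i=\BB_{2l-1-2i}^{i}\) for \(0\le i\le l-1\) in the odd case. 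The union is the whole space by Lemma \ref{lem:closed-strata}.

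Next we group the subsets \(S\) by \(m=\min S\) and \(M'=\max S\). By Lemma \ref{meet}, \(\bigcap_{\alpha\in S}X_\alpha=X_m\cap X_{M'}=\BB^{m}_{n-m-M'}\), where \(n\) is the order. Fix \(d=M'-m\).
\begin{itemize}
\item If \(d=0\), the only subset is \(S=\{m\}\), with sign \(+1\).
\item If \(d=1\), the only subset is \(S=\{m,m+1\}\), with sign \(-1\).
\item If \(d\ge2\), \(S\) consists of the two endpoints together with an arbitrary subset of the \(d-1\) intermediate indices. The total coefficient is \(\sum_{j}(-1)^{j+1}\binom{d-1}{j}=-(1-1)^{d-1}=0\).
\end{itemize}

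Thus only singletons and adjacent pairs survive. For the even case, the adjacent intersection \(X_i\cap X_{i+1}\) equals \(\BB^{i}_{2l-2i-(i+1)+i}=\BB^{i}_{2l-1-2i}\) by \eqref{formulaintersection}, for \(0\le i\le l-1\). This yields the first formula. For the odd case, the adjacent intersections are \(\BB^{i}_{2l-2-2i}\) for \(0\le i\le l-2\), which yields the second formula. This is exactly the row-sum statement \(T_{l+1}-T_l\), respectively \(T_l-T_{l-1}\).

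The main point needing care is the justification of the inclusion--exclusion formula itself. It holds because each intersection of the top-row pieces is a subcomplex whose cells are precisely the common cells; Lemma \ref{lem:compatible-triangulations} guarantees this, and then the formula follows by counting cells dimension by dimension. Alternatively, one can derive it from Mayer--Vietoris applied inductively to the pushouts, using field coefficients and finiteness of the complexes. The index bookkeeping, namely that \(\min\{p+q,r+s\}\) always equals the total of the \emph{larger} index in an adjacent pair, is routine.
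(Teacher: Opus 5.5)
Your proposal is correct and is essentially the paper's argument: inclusion--exclusion over the top-row cover, Lemma~\ref{meet} to make each intersection depend only on the interval from $\min S$ to $\max S$, the binomial cancellation $-(1-1)^{d-1}=0$ for $d\ge2$, and identification of the surviving adjacent-pair intersections with the second row of the diagram. Your explicit justification of the inclusion--exclusion formula by cell counting in the common CW structure of Lemma~\ref{lem:compatible-triangulations} is a detail the paper leaves implicit.
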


\begin{proof}
For the even diagram $\mathcal D_{2l}$, the top row is \(\BB_{2l-2i}^{i}\), \(0\le i\le l\), and the next row is \(\BB_{2l-1-2i}^{i}\), \(0\le i\le l-1\) as already indicated, giving the first formula. The same argument applies to $\mathcal D_{2l-1}$.
\end{proof}

On our way to derive Theorem \ref{thm:intro-euler}, the  following two calculations are needed:
\begin{enumerate}
\item For joins,
\begin{equation}\label{eq:euler-join}
\ch(X*Y)=\ch(X)+\ch(Y)-\ch(X)\ch(Y).
\end{equation}
\item For ordinary barycenter spaces,
\begin{equation}\label{eq:ordinary-euler}
\ch(\BB_k(X))=
1-\frac1{k!}(1-\ch(X))(2-\ch(X))\cdots(k-\ch(X)).
\end{equation}
This is by \cite[Corollary 1.4(a)]{KallelKaroui2011}.
\end{enumerate}

\begin{lem}\label{lem:Bpq-euler}
Let \(M\) be compact and even-dimensional.  For every \(p,q\ge0\) with \(p+q\ge1\),
\begin{equation*}
\ch(\BB_q^p(M))=\ch(\BB_p(M)).
\end{equation*}
By convention $\chi (\BB_0(M))=0$.
\end{lem}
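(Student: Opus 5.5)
We argue by induction on \(p\), using the strict (and homotopy) pushout square \eqref{eq:Bpq-colimit} of Proposition \ref{connectivityBpq}. Euler characteristic is additive for homotopy pushouts of finite CW complexes, and Lemma \ref{lem:compatible-triangulations} supplies compatible finite CW structures. So for \(p,q\ge1\) we have
\[
\ch(\BB_q^p)=\ch(\BB_{q+1}^{p-1})+\ch\bigl(\BB_p(M)*\BB_q(\pa M)\bigr)-\ch\bigl(\BB_1^{p-1}(M)*\BB_q(\pa M)\bigr).
\]
The key input is that \(\pa M\) is a closed odd-dimensional manifold, so \(\ch(\pa M)=0\). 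Then \eqref{eq:ordinary-euler} gives \(\ch(\BB_q(\pa M))=1-\frac{1}{q!}\,q!=0\) for every \(q\ge1\).

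The base case \(p=0\) then reads \(\ch(\BB_q^0(M))=\ch(\BB_q(\pa M))=0=\ch(\BB_0(M))\), which is consistent with the stated convention. The case \(q=0\) is the tautology \(\BB_0^p(M)=\BB_p(M)\). For the inductive step, the join formula \eqref{eq:euler-join} with \(\ch(Y)=0\) gives \(\ch(X*Y)=\ch(X)\). The displayed identity therefore becomes
\[
\ch(\BB_q^p)=\ch(\BB_{q+1}^{p-1})+\ch(\BB_p(M))-\ch(\BB_1^{p-1}(M)).
\]
By the induction hypothesis on \(p-1\), valid for all \(q\), both \(\ch(\BB_{q+1}^{p-1})\) and \(\ch(\BB_1^{p-1})\) equal \(\ch(\BB_{p-1}(M))\). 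They cancel, leaving \(\ch(\BB_q^p)=\ch(\BB_p(M))\). The edge case \(p=1\) uses \(\BB_{q+1}^0=\BB_{q+1}(\pa M)\) and \(\BB_1^0=\pa M\), both of Euler characteristic \(0\). This matches \(\ch(\BB_0(M))=0\), so the convention is exactly what makes the induction uniform.

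The main obstacle is justifying additivity of \(\ch\) over the pushout. Lemma \ref{pushoutmethod} gives the pushout with \(A\hookrightarrow B\) an NDR pair, but we also need all four spaces to be finite CW complexes. The joins of compact triangulable spaces are finite complexes, and Lemma \ref{lem:compatible-triangulations} covers the strata. The Mayer--Vietoris sequence for the homotopy pushout (equivalently, \(\ch(D)=\ch(C)+\ch(B)-\ch(A)\) via Lemma \ref{equivalencequotients}, \(D/C\simeq B/A\)) then gives the formula.

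One should also note that even-dimensionality of \(M\) is used only to force \(\ch(\pa M)=0\), and that \(\BB_p(M)*\BB_q(\pa M)\) is indeed the unreduced join used in \eqref{eq:euler-join}.
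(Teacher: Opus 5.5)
Your proposal is correct and follows the paper's proof essentially step for step. Both arguments use induction on $p$ through the pushout square \eqref{eq:Bpq-colimit} and the vanishing $\chi(\BB_q(\partial M))=0$ from \eqref{eq:ordinary-euler}. The join formula then reduces the square to $\chi(\BB_q^p)=\chi(\BB_{q+1}^{p-1})+\chi(\BB_p(M))-\chi(\BB_1^{p-1}(M))$, and the two $(p-1)$-terms cancel by the inductive hypothesis. The one addition is your explicit justification that $\chi$ is additive over the pushout (via $D/C\simeq B/A$ and finite CW structures), which the paper leaves implicit.
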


\begin{proof}
The case \(q=0\) of the lemma is equally immediate since
\(\BB_0^p(M)= \BB_p(M)\). We assume $q\geq 1$.
When \(M\) is compact and even-dimensional, then \(\pa M\) is closed and odd-dimensional, hence \(\ch(\pa M)=0\). According to \eqref{eq:ordinary-euler}, we see that
\begin{equation}\label{eq:boundary-bary-euler-zero}
\chi (\BB_q^0(M)) = \ch(\BB_q(\pa M))=0
\qquad q\ge1.
\end{equation}
This is in agreement with $\chi (\BB_0(M))=0$. 
So the claim is true for every $q\geq 0$ when $p=0$. We can now prove the claim
$$\mathcal P(p) : \chi(\BB_q^p(M))=\chi(\BB_p(M)) \ \ \hbox{for every $q\geq 0$}$$
by induction on $p$.
Assume $\mathcal P(p-1)$.
 By the additivity of the Euler characteristic on pushouts, using \eqref{eq:euler-join} and \eqref{eq:boundary-bary-euler-zero}, we get
\[
\ch(\BB_q^p)=
\ch(\BB_{q+1}^{p-1})+
\ch(\BB_p)-
\ch(\BB_1^{p-1}).
\]
In the formula above, the two terms involving \(p-1\) equal to
\(\chi(\BB_{p-1}(M))\) by the induction hypothesis, thus they cancel out. Hence
$\chi(\BB_q^p(M)) = \chi (\BB_p(M))$.
\end{proof}

\begin{proof}[Proof of Theorem \ref{thm:intro-euler}]
Combining Lemma \ref{lem:euler-sum} with Lemma \ref{lem:Bpq-euler}, we obtain
\[
\ch(\BB_{2l}^{\pa}(M))=
\sum_{i=0}^{l}\ch(\BB_i(M))-
\sum_{i=0}^{l-1}\ch(\BB_i(M))=\ch(\BB_l(M)),
\]
and similarly
\[
\ch(\BB_{2l-1}^{\pa}(M))=
\sum_{i=0}^{l-1}\ch(\BB_i(M))-
\sum_{i=0}^{l-2}\ch(\BB_i(M))=\ch(\BB_{l-1}(M)).
\]
If \(\ch(M)=0\), then \eqref{eq:ordinary-euler} gives \(\ch(\BB_r(M))=0\) for every \(r\ge1\), and \(\ch(\BB_0(M))=0\) by convention.
\end{proof}


\section{The homology decomposition}
\label{sec:general-homology}

In the introduction, we discussed the filtration of $\BB_l^\pa (M)$ by the $\BB_i^\pa (M)$ with $i\leq l$. 
For \(l\ge2\), the natural inclusion of $\BB_{l-1}^\pa (M)$ in $\BB_l^\pa (M)$ is a topological embedding and in fact a cofibration. This property is recorded below.

\begin{lem}\label{ndrpair} For every \(l\ge2\), $(\BB_l^\pa (M), \BB_{l-1}^\pa (M))$ is an NDR pair.
\end{lem}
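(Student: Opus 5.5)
The plan is to show that $\BB_{l-1}^\pa(M)$ is a subcomplex of $\BB_l^\pa(M)$ for a suitable CW structure. Since the inclusion of a subcomplex into a CW complex is a closed cofibration \cite[Chapter~5]{Strom2011}, and closed cofibrations are the same as NDR pairs, this gives the claim. The CW structures will come from Lemma \ref{lem:compatible-triangulations}, applied at the single order $l$.

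First, I will fix a finite triangulation of $(M,\pa M)$. As in the proof of Lemma \ref{lem:compatible-triangulations}, I then take the symmetric-join cell structure on $\BB_l(M)$, which is subdivided so that permutations, face maps and diagonal maps are cellular. In this structure every closed stratum $\BB_q^p(M)$ with $p+q\le l$ is a union of closed cells, because the conditions ``at most $p$ interior support points'' and ``at most $p+q$ support points'' are unions of closed faces. The key point here is that $\pa M$ is a subcomplex of $M$. So a cell of $\BB_l(M)$ (the image of a product of simplices of $M$ times a face of $\Delta^{k-1}$) either has all its open interior lying over interior simplices, or lies over boundary simplices. The number of interior support points is therefore upper semicontinuous and constant along the relevant cell interiors, once we pass to a subdivision in which the open cells are contained in single strata $\BB^\circ_{p,q}(M,\pa M)$.

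Second, Lemma \ref{lem:closed-strata} gives
\[
\BB_l^\pa(M)=\bigcup_{0<2p+q\le l}\BB_q^p(M),\qquad \BB_{l-1}^\pa(M)=\bigcup_{0<2p+q\le l-1}\BB_q^p(M).
\]
Both are finite unions of subcomplexes of the same CW complex $\BB_l(M)$, hence subcomplexes, and the second is contained in the first. Therefore $\BB_{l-1}^\pa(M)\hookrightarrow\BB_l^\pa(M)$ is a subcomplex inclusion. This also confirms that the subspace topology from $\BB_{l-1}(M)\subset\BB_l(M)$ agrees with the subspace topology from $\BB_l^\pa(M)$, so the inclusion is an embedding. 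The inclusion is closed, since $\BB_{l-1}^\pa(M)$ is compact in a Hausdorff space.

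The main obstacle is the cellularity claim in the first step. The quotient map $\coprod M^k\times\Delta^{k-1}\to\BB_l(M)$ identifies points through diagonals, so cells must be chosen in which each open cell lies in one stratum $\BB^\circ_{p,q}(M,\pa M)$. I would handle this by triangulating each $M^k\times\Delta^{k-1}$ equivariantly. The subcomplexes to keep are the diagonals, the faces $t_i=0$, and the subproducts $\prod(\pa M\text{ or }M)$; I would take a common subdivision of these, using that finite unions and intersections of subcomplexes of a simplicial complex are subcomplexes. Passing to the quotient by the finite group and by the cellular identifications then gives a CW complex in which each stratum closure is a subcomplex.

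If the cellular details become too heavy, the fallback is to argue directly with the Lévy--Prokhorov metric. One would build a neighbourhood $U$ of $\BB_{l-1}^\pa(M)$ together with a deformation pushing an interior point within distance $\epsilon$ of $\pa M$ onto $\pa M$ along a collar, and merging or deleting small-weight or nearby support points. These moves only decrease the cost $2p+q$, and $\operatorname{dist}(\cdot,\BB_{l-1}^\pa(M))$ serves as the NDR function.
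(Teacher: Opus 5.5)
Your argument is correct, and it reaches the conclusion by a more direct route than the paper. The paper does not show directly that $\BB_{l-1}^\pa(M)$ is a subcomplex of $\BB_l^\pa(M)$. Instead, for each top type $2p+q=l$, it maps the join $J_{p,q}=\BB_p(M)*\BB_q(\pa M)$ onto $\BB_q^p(M)$ by $[\sigma,\tau,s]\mapsto s\sigma+(1-s)\tau$. It identifies the preimage of $\BB_{l-1}^\pa(M)$ as $L_{p,q}=(\BB_1^{p-1}(M)*\BB_q(\pa M))\cup(\BB_p(M)*\BB_{q-1}(\pa M))$ and checks injectivity off $L_{p,q}$. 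Lemma~\ref{pushoutmethod} then presents $\BB_l^\pa(M)$ as the pushout of $\BB_{l-1}^\pa(M)\leftarrow\coprod L_{p,q}\rightarrow\coprod J_{p,q}$. In that route the cellular input is needed only to make $L_{p,q}\subset J_{p,q}$ a subcomplex. The cofibration property is then transported formally, since coproducts and pushouts of cofibrations are cofibrations.

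You instead put all the cellular input into one CW structure on $\BB_l(M)$ in which every $\BB_q^p(M)$ with $p+q\le l$ is a subcomplex, which is exactly what Lemma~\ref{lem:compatible-triangulations} asserts. Lemma~\ref{lem:closed-strata} then finishes the proof in one line. This is the same reasoning the paper itself uses for Reedy cofibrancy in Corollary~\ref{prop:connectivity}, and for $T_l^{(i-1)}\subset T_l^{(i)}$ in the proof of Theorem~\ref{thm:intro-general-homology}. Your version is shorter and avoids the point-set verification of the pushout. The paper's longer version buys an explicit description of $\BB_l^\pa(M)$ as obtained from $\BB_{l-1}^\pa(M)$ by attaching joins, which it reuses in Lemma~\ref{eq:two-sided-mixed-quotient} to identify the mixed row quotients.

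Two small corrections:
\begin{itemize}
\item The number of interior support points is lower, not upper, semicontinuous. It can only drop under limits, which is precisely why the set of configurations with at most $p$ interior support points is closed.
\item Your metric fallback would not give NDR data as sketched. Pushing near-boundary interior points onto $\pa M$ also moves points of $\BB_{l-1}^\pa(M)$, whereas the deformation must fix that subspace pointwise.
\end{itemize}
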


\begin{proof}
For every pair \((p,q)\) satisfying
$
2p+q=l,
$ 
we use the augmented convention \(\BB_0(X)=S^{-1}\) only inside join
expressions, so that \(X*\BB_0(Y)=X\), and put
$$J_{p,q}=\begin{cases} \BB_p(M)*\BB_q(\partial M),& pq\neq 0\\
\BB_l(\pa M),& p=0\\
\BB_p(M),& l=2p\ \hbox{and}\ q=0
\end{cases}
$$
For each $(p,q)$, $pq\neq 0$, we consider the natural map
\[
\Phi_{p,q}\colon J_{p,q}\longrightarrow
\BB_l^\partial(M)\ ,\ 
\Phi_{p,q}([\sigma,\tau,s])
=
s\sigma+(1-s)\tau.
\]
It is easy to see that
\[
L_{p,q}
=
\Phi_{p,q}^{-1}
\left(
\BB_{l-1}^\partial(M)
\right) =
\left(
\BB_1^{p-1}(M)*\BB_q(\partial M)
\right)
\cup
\left(
\BB_p(M)*\BB_{q-1}(\partial M)
\right)
\subseteq J_{p,q}.
\]
At the two endpoints we set
\(\Phi_{0,l}\) the natural inclusion, $
L_{0,l}=\BB_{l-1}(\partial M)$, and when \(l=2p\) (and $q=0$)
\(\Phi_{p,0}\) is again the natural inclusion, and
$L_{p,0}=\BB_1^{p-1}(M)$.

A key observation is that the restriction of \(\Phi_{p,q}\) to
$
J_{p,q}\setminus L_{p,q}
$
is a homeomorphism onto the stratum
$
\BB_{p,q}^{\circ}(M,\partial M)
$.
Indeed, outside \(L_{p,q}\), the interior and boundary parts of the
support are uniquely determined by the resulting barycenter.
Since the strata of weighted cardinality \(l\) are precisely the
\(\BB_{p,q}^{\circ}(M,\partial M)\) with \(2p+q=l\), and since the
closures of two distinct such strata intersect only inside
\(\BB_{l-1}^\partial(M)\), we can apply Lemma \ref{pushoutmethod} and get the pushout diagram
\[
\xymatrix{
\displaystyle
\coprod_{2p+q=l} L_{p,q}
\ar[r]\ar[d]
&
\coprod_{2p+q=l} J_{p,q}\ar[d]^{\coprod\Phi_{p,q}}
\\
\BB_{l-1}^\partial(M)
\ar[r]
&
\BB_l^\partial(M).
}
\]
It remains to verify that
$
L_{p,q}\hookrightarrow J_{p,q}
$
is a cofibration. 

By Lemma \ref{lem:compatible-triangulations}, the finite barycenter spaces
\(\BB_p(M)\) and \(\BB_q(\partial M)\), and the subspaces
\[
\BB_1^{p-1}(M)\subseteq \BB_p(M),
\qquad
\BB_{q-1}(\partial M)\subseteq \BB_q(\partial M),
\]
admit compatible CW structures. Hence the two
subspaces 
$\BB_1^{p-1}(M)*\BB_q(\partial M)$
and
$\BB_p(M)*\BB_{q-1}(\partial M)$
are subcomplexes of \(J_{p,q}\), and so is their union \(L_{p,q}\).
Therefore
$L_{p,q}\hookrightarrow J_{p,q}$
is a closed cofibration.
Since a coproduct of cofibrations is a cofibration and a pushout of a
cofibration is a cofibration, the inclusion
$
\BB_{l-1}^\partial(M)
\hookrightarrow
\BB_l^\partial(M)
$
is a closed cofibration, and
$
\left(
\BB_l^\partial(M),
\BB_{l-1}^\partial(M)
\right)
$
is an NDR pair.
\end{proof} 

\begin{lem}\label{lem:contractible-in-next}
For every \(l\ge 2\), the subspace \(\BB_{l-1}^{\pa}(M)\) is contractible inside \(\BB_l^{\pa}(M)\).  Hence
$$\BB_l^\pa (M)/\BB_{l-1}^\pa (M)\simeq\BB_{l}^\pa (M)\vee \Sigma \BB_{l-1}^\pa (M)$$
In particular, with coefficients in any commutative ring
\begin{equation*}
\wt H_r(\BB_l^{\pa}(M)/\BB_{l-1}^{\pa}(M))
\cong
\wt H_r(\BB_l^{\pa}(M))
\oplus
\wt H_{r-1}(\BB_{l-1}^{\pa}(M)).
\end{equation*}
\end{lem}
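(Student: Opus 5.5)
Our plan is to first construct an explicit null-homotopy of the inclusion \(\iota\colon\BB_{l-1}^{\pa}(M)\hookrightarrow\BB_l^{\pa}(M)\), and then deduce the splitting from the cofibration sequence supplied by Lemma \ref{ndrpair}.

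\textbf{The null-homotopy.} Fix a base point \(b_0\in\pa M\). For \(\mu\in\BB_{l-1}^{\pa}(M)\) set
\[
H(\mu,t)=(1-t)\mu+t\,\delta_{b_0},\qquad t\in[0,1].
\]
We check the three things this requires.
\begin{enumerate}
\item \emph{Cost.} If \(\mu\) has effective type \((p,q)\) with \(2p+q\le l-1\), then \(H(\mu,t)\) has type \((p,q)\) or \((p,q+1)\). Adding the boundary point \(b_0\) costs at most \(1\), so \(2p+q+1\le l\) and \(H(\mu,t)\in\BB_l^{\pa}(M)\).
\item \emph{Continuity.} \(H\) is induced by the continuous map \((x_1,\dots,x_k,t_1,\dots,t_k,t)\mapsto(x_1,\dots,x_k,b_0,(1-t)t_1,\dots,(1-t)t_k,t)\) on \(\coprod M^k\times\Delta^{k-1}\times I\). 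It is compatible with the relations \(\sim\), since merging \(b_0\) with an \(x_i\) is itself one of the relations.
\item \emph{Endpoints.} \(H(\cdot,0)=\iota\) and \(H(\cdot,1)\) is constant at \(\delta_{b_0}\).
\end{enumerate}
Continuity on the product with \(I\) uses that \(\BB_{l-1}^{\pa}(M)\times I\) is a quotient of the compact space \(A_{l-1}\times I\). This holds because products of quotient maps with compact Hausdorff \(I\) are quotient maps. Alternatively one can argue directly with the Lévy--Prokhorov metric, where \(H\) is clearly continuous.

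\textbf{The splitting.} By Lemma \ref{ndrpair}, \(\iota\) is a cofibration, so \(\BB_l^{\pa}(M)/\BB_{l-1}^{\pa}(M)\simeq C_\iota\), the mapping cone. The homotopy type of a mapping cone depends only on the homotopy class of the map. Since \(\iota\simeq *\), we get
\[
C_\iota\simeq C_*=\BB_l^{\pa}(M)\vee\Sigma\BB_{l-1}^{\pa}(M),
\]
where \(C_*\) is the cone on the constant map, with the wedge taken at the base point \(\delta_{b_0}\). All spaces involved are compact CW complexes by Lemma \ref{lem:compatible-triangulations}, so these are honest homotopy equivalences. A small point: we need the null-homotopy to be based, or else we use unreduced cones. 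Since \(H(\delta_{b_0},t)=\delta_{b_0}\) for all \(t\), \(H\) is in fact a based homotopy, which removes this issue.

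\textbf{Homology.} Take reduced homology of the wedge, using \(\wt H_r(X\vee Y)\cong\wt H_r(X)\oplus\wt H_r(Y)\) for well-pointed spaces (CW base points) and the suspension isomorphism \(\wt H_r(\Sigma Y)\cong\wt H_{r-1}(Y)\). This gives the stated formula over any commutative ring. An equivalent route is to note that \(\iota_*=0\), so the long exact sequence of the pair breaks into short exact sequences \(0\to\wt H_r(\BB_l^{\pa})\to H_r(\BB_l^{\pa},\BB_{l-1}^{\pa})\to\wt H_{r-1}(\BB_{l-1}^{\pa})\to0\). These split because the space-level splitting provides a retraction.

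\textbf{Main obstacle.} The only delicate step is the continuity and well-definedness of \(H\) on the quotient model. We would handle it as in the Lemma comparing the quotient and Lévy--Prokhorov topologies, by continuity on \(A_{l-1}\times I\) together with compactness.
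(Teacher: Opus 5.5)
Your proposal is correct and follows the paper's proof: you use the same null-homotopy $H(\mu,t)=(1-t)\mu+t\,\delta_{b_0}$, which adjoins a boundary point of cost one, and the NDR property from Lemma \ref{ndrpair} to identify the quotient with the mapping cone of a null-homotopic inclusion. Your extra checks fill in details the paper leaves implicit: continuity via the quotient model, the observation that $H$ fixes $\delta_{b_0}$ so the homotopy is based, and the splitting over an arbitrary ring. The one slip is that $H(\mu,1)=\delta_{b_0}$ has type $(0,1)$ rather than $(p,q)$ or $(p,q+1)$, which is harmless since its cost is $1\le l$.
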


\begin{proof}
It suffices to show that $\BB_{l-1}^\pa (M)$ is contractible in $\BB_l^\pa (M)$.
Choose \(y_0\in\pa M\).  For \(\sigma=\sum_i t_i\delta_{z_i}\in \BB_{l-1}^{\pa}(M)\), set
\[
H(s,\sigma)=s\delta_{y_0}+(1-s)\sigma,
\qquad 0\le s\le1.
\]
The added point lies on the boundary and costs one unit, hence \(H(s,\sigma)\in \BB_l^{\pa}(M)\).  This contracts \(\BB_{l-1}^{\pa}(M)\) to \(\delta_{y_0}\) inside \(\BB_l^{\pa}(M)\), and since the pair $(\BB_l^\pa (M), \BB_{l-1}^\pa (M))$ is an NDR pair, the quotient is up to homotopy the mapping cone on $\BB_{l-1}^\pa (M)$ with the base collapsed, thus the assertion.
\end{proof}

As in Lemma \ref{lem:contractible-in-next}, we have the splitting
\begin{equation}\label{eq:bar-B-def-general}
\BB_n(X)/\BB_{n-1}(X)\simeq \BB_n(X)\vee \Sigma \BB_{n-1}(X),\ \ n\ge 2
\end{equation}
and the homology decomposition
\begin{equation*}
\wt H_*(\BB_n(X)/\BB_{n-1}(X))
\cong
\wt H_*(\BB_n(X))
\oplus
\wt H_{*-1}(\BB_{n-1}(X))\ \ ,\ \ n\geq 2
\end{equation*}

We recall the following homotopy facts on join constructions (see for example \cite{AKN2016}).

\begin{itemize}
\item As is customary in combinatorial topology, we adopt the following augmented
join convention. Consider some formal symbol/sphere $S^{-1}$ and set
\begin{equation}\label{convention}
X*S^{-1}=S^{-1}*X=X.
\end{equation}
Whenever $\BB_0(X)$ occurs as a join factor in the formulas below, we use the augmented convention $\BB_0(X)=S^{-1}$.
\item If \((X,A)\) and \((Y,B)\) are compact CW pairs and the inclusions are cofibrations, then
\begin{equation}\label{eq:join-quotient-one-sided}
(X*Y)/(A*Y)\simeq (X/A)*Y.
\end{equation}
\item If both \(A\) and \(B\) above are nonempty, then
\begin{equation}\label{eq:join-quotient-two-sided}
(X*Y)/(A*Y\cup X*B) \simeq (X/A)*(Y/B)
\end{equation}
\item Let \(A\), \(B\), and \(C\) be well-pointed spaces having the homotopy
type of CW-complexes. Then there is a natural homotopy equivalence
\begin{equation}\label{joinwedge}
A*(B\vee C)\simeq (A*B)\vee(A*C).
\end{equation}
A quick proof of this fact uses the known identification $\displaystyle
X*Y\simeq \Sigma(X\wedge Y)$,
and the fact that the smash product \(\wedge\) 
distributes over wedges.
\end{itemize}

\subsection{The rank filtration} 
Consider the meet-semilattice diagram $\mathcal D_{2l} $ in Proposition \ref{colimitdiagram} whose
colimit is $\BB_{2l}^\pa (M)$. Filter this colimit by rank, where rank zero corresponds to the very bottom space $\mathcal D_{2l}^{(0)}=\BB_l^0(M) = \BB_l(\pa M)$, and generally $\mathcal D_{2l}^{(i)}$ is the meet-semilattice up to rank $i$ (that is, the first $i+1$ rows). Then
$$T_l^{(i)} := \colim 
\mathcal D_{2l}^{(i)} 
=
\bigcup_{0\leq j\le i}\BB_{l+i-2j}^{j}(M)$$
In particular
$
T_l^{(l)} = \colim\mathcal D_{2l}^{(l)}= \BB_{2l}^{\pa}(M),
$ and $
T_l^{(l-1)}=\colim\mathcal D_{2l}^{(l-1)}= \BB_{2l-1}^{\pa}(M).
$. 

We will now analyze the consecutive quotients of the filtration.
It is convenient to depict in Figure~\ref{fig:two-row-quotient} two consecutive rows of $\mathcal D_{2l}$. 
\begin{figure}[htbp]
\centering
\begin{tikzpicture}
[>=Latex,every node/.style={font=\scriptsize},x=1.2cm,y=0.9cm]

\node (a0) at (0,0) {$\BB_{l+i}^{0}$};
\node (a1) at (2,0) {$\BB_{l+i-2}^{1}$};
\node (a2) at (4,0) {$\cdots$};
\node (a3) at (6,0) {$\BB_{l-i+2}^{\,i-1}$};
\node (a4) at (8,0) {$\BB_{l-i}^{\,i}$};

\node (b0) at (1,-1) {$\BB_{l+i-1}^{0}$};
\node (b1) at (3,-1) {$\BB_{l+i-3}^{1}$};
\node (b2) at (5,-1) {$\cdots$};
\node (b3) at (7,-1) {$\BB_{l-i+1}^{\,i-1}$};

\draw[->] (b0) -- (a0);
\draw[->] (b0) -- (a1);

\draw[->] (b1) -- (a1);
\draw[->] (b1) -- (a2);

\draw[->] (b2) -- (a2);
\draw[->] (b2) -- (a3);

\draw[->] (b3) -- (a3);
\draw[->] (b3) -- (a4);

\end{tikzpicture}
\caption{Two consecutive rows in the boundary-weighted filtration.  Dividing the upper row by the lower row produces the row-quotient wedge used in \eqref{eq:Ti-quotient-general}.}
\label{fig:two-row-quotient}
\end{figure}
From Fig. \ref{fig:two-row-quotient}, it is clear that 
\begin{equation}\label{eq:Ti-quotient-general} \frac{T_l^{(i)}}{T_l^{(i-1)}}\simeq\left(\frac{\BB_{l+i}(\pa M)}{\BB_{l+i-1}(\pa M)}\right)
\vee\bigvee_{1\le j<i}
\left(
\frac{\BB_{l+i-2j}^{j}(M)}
{\BB_{l+i-2j-1}^{j}(M)\cup\BB_{l+i-2j+1}^{j-1}(M)}
\right)
\vee\left(
\frac{\BB_{l-i}^{i}(M)}{\BB_{l-i+1}^{i-1}(M)}
\right)
\end{equation}
For \(i=0\), the quotient is just \(\BB_l(\pa M)\).  The last summand is one-sided: there is no lower-row term \(\BB_{l-i-1}^{i}\) to divide out at that stage.

We have to analyze the wedge terms in this decomposition, and remarkably they can be simplified drastically. Set $A_p = \BB_p(M/\partial M)$.

\begin{lem}\label{lem:row-quotient-splitting}
For $p\geq 1$, we have the homotopy equivalence 
$\displaystyle\frac{\BB_q^p(M)}{\BB_{q+1}^{p-1}(M)} 
\simeq 
A_p*\BB_q(\pa M)$.
In particular
$$\displaystyle 
\wt H_*\left(\BB_q^p(M), \BB_{q+1}^{p-1}(M)\right)
\cong
\wt H_*(A_p*\BB_q(\pa M)).
$$
\end{lem}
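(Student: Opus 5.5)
The plan is to use the strict pushout square \eqref{eq:Bpq-colimit} of Proposition \ref{connectivityBpq} together with Lemma \ref{equivalencequotients}, and then identify the resulting relative join with $A_p*\BB_q(\pa M)$ via \eqref{eq:join-quotient-one-sided} and the homotopy equivalence \eqref{quotient}.

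First, for $p\ge1$ and $q\ge1$, Proposition \ref{connectivityBpq} gives a pushout square whose top arrow $\BB_1^{p-1}(M)*\BB_q(\pa M)\hookrightarrow \BB_p(M)*\BB_q(\pa M)$ is a cofibration. By Lemma \ref{lem:compatible-triangulations} this is an inclusion of subcomplexes, so it is a cofibration. The bottom arrow $\BB_{q+1}^{p-1}(M)\hookrightarrow\BB_q^p(M)$ is a cofibration for the same reason, and it is also the pushout of a cofibration. Lemma \ref{equivalencequotients} then yields
\[
\frac{\BB_q^p(M)}{\BB_{q+1}^{p-1}(M)}\simeq \frac{\BB_p(M)*\BB_q(\pa M)}{\BB_1^{p-1}(M)*\BB_q(\pa M)}.
\]
Next, apply the one-sided join quotient formula \eqref{eq:join-quotient-one-sided} with $X=\BB_p(M)$, $A=\BB_1^{p-1}(M)$ and $Y=\BB_q(\pa M)$. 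These are compact CW pairs with cofibrant inclusions, again by Lemma \ref{lem:compatible-triangulations}. This gives
\[
\bigl(\BB_p(M)/\BB_1^{p-1}(M)\bigr)*\BB_q(\pa M).
\]
Finally, \eqref{quotient} identifies $\BB_p(M)/\BB_1^{p-1}(M)\simeq \BB_p(M/\pa M)=A_p$, and joins preserve homotopy equivalences between CW complexes. This gives $A_p*\BB_q(\pa M)$.

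The case $q=0$ must be handled separately, since the pushout square was only established for $q\ge1$. Here $\BB_0^p(M)=\BB_p(M)$ and $\BB_1^{p-1}(M)$ is a subcomplex, so the quotient is directly $\BB_p(M)/\BB_1^{p-1}(M)\simeq A_p$ by \eqref{quotient}. With the augmented convention $\BB_0(\pa M)=S^{-1}$ from \eqref{convention}, this equals $A_p*\BB_0(\pa M)$. The statement about homology then follows because the pair $(\BB_q^p(M),\BB_{q+1}^{p-1}(M))$ is a CW pair, so its relative homology is the reduced homology of the quotient.

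The step I expect to need the most care is justifying \eqref{eq:join-quotient-one-sided} in this setting. One has to check that the join quotient $(X*Y)/(A*Y)$ really is homotopy equivalent to $(X/A)*Y$, rather than to a version with an extra cone on $Y$ collapsed. In fact $(X/A)*Y$ is obtained from $(X*Y)/(A*Y)$ up to homotopy by contracting the image of $\{*\}*Y$, which is a cone on $Y$ and hence contractible. I would argue this concretely: the cone $\{*\}*Y\subset (X/A)*Y$ is a contractible subcomplex, and collapsing it gives $(X*Y)/(A*Y)$. Collapsing a contractible subcomplex is a homotopy equivalence, so the two spaces agree up to homotopy.
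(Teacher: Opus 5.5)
Your proof is correct and is the paper's argument step for step: Lemma~\ref{equivalencequotients} applied to the pushout square \eqref{eq:Bpq-colimit}, then \eqref{eq:join-quotient-one-sided}, then \eqref{quotient}, with $q=0$ handled by \eqref{quotient} and the convention $\BB_0(\pa M)=S^{-1}$. Your justification of the one-sided join formula also fills in a step the paper leaves implicit: $(X*Y)/(A*Y)$ is $(X/A)*Y$ with the contractible cone $\{*\}*Y$ collapsed. This needs $A\neq\varnothing$, which holds here because $\pa M\subset\BB_1^{p-1}(M)$.
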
 

\begin{proof}
For $p\geq 1, q\geq 1$, we have the series of equivalences 
$$\frac{\BB_q^p(M)}{\BB_{q+1}^{p-1}(M)} 
\simeq
\frac{\BB_p(M)*\BB_q(\pa M)}{\BB_1^{p-1}(M)*\BB_q(\pa M)}\simeq
\frac{\BB_p(M)}{\BB_1^{p-1}(M)}*\BB_q(\pa M)
\simeq
\BB_p(M/\pa M)*\BB_q(\pa M)$$
obtained sequentially by first applying Lemma \ref{eq:Bpq-colimit}, then \eqref{eq:join-quotient-one-sided}, and finally \eqref{quotient}. For $p\ge1$ and $q=0$, the statement reduces to \eqref{quotient} using the convention \eqref{convention}.

The homology statement is evident.
\end{proof}

\begin{lem}\label{eq:two-sided-mixed-quotient}
For \(p\ge1\) and \(q\ge2\), there is a homotopy equivalence
$$\frac{\BB_q^p(M)}{\BB_{q-1}^{p}(M)\cup\BB_{q+1}^{p-1}(M)}\simeq
A_p*(\BB_q(\pa M)\vee\Sigma \BB_{q-1}(\pa M))
$$
In particular
$$\displaystyle 
\wt H_*\left(
\BB_q^p(M), \BB_{q-1}^{p}(M)\cup\BB_{q+1}^{p-1}(M)
\right) \cong
\wt H_*(A_p*\BB_q(\pa M))
\oplus
\wt H_*(A_p*\Sigma \BB_{q-1}(\pa M)).$$
\end{lem}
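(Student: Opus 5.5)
We follow the pattern of Lemma \ref{lem:row-quotient-splitting}, this time dividing out both faces. The plan is to transport the quotient to the join model $J_{p,q}=\BB_p(M)*\BB_q(\pa M)$ using the pushout square \eqref{eq:Bpq-colimit}. Recall that $f\colon J_{p,q}\to\BB_q^p(M)$, $[\mu,\nu,s]\mapsto s\mu+(1-s)\nu$, is a quotient map. It is injective off $\BB_1^{p-1}(M)*\BB_q(\pa M)$, and we have $f^{-1}(\BB_{q+1}^{p-1}(M))=\BB_1^{p-1}(M)*\BB_q(\pa M)$. As computed in the proof of Lemma \ref{ndrpair}, the preimage of $\BB_{q-1}^p(M)\cup\BB_{q+1}^{p-1}(M)$ is
\[
L=\bigl(\BB_1^{p-1}(M)*\BB_q(\pa M)\bigr)\cup\bigl(\BB_p(M)*\BB_{q-1}(\pa M)\bigr).
\]
The map $f$ restricts to a homeomorphism from $J_{p,q}\setminus L$ onto the open stratum $\BB^\circ_{p,q}(M,\pa M)$, and that open stratum is exactly the complement of $\BB_{q-1}^p(M)\cup\BB_{q+1}^{p-1}(M)$ in $\BB_q^p(M)$ by the closure lemma of \S\ref{sec:def}. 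Lemma \ref{pushoutmethod} therefore gives a pushout, and Lemma \ref{equivalencequotients} then yields
\[
\frac{\BB_q^p(M)}{\BB_{q-1}^{p}(M)\cup\BB_{q+1}^{p-1}(M)}\simeq \frac{J_{p,q}}{L}.
\]
Both inclusions here are cofibrations: $L\subset J_{p,q}$ is a subcomplex by Lemma \ref{lem:compatible-triangulations}, and the bottom inclusion is a subcomplex inclusion of closed strata. Both subspaces are nonempty, since $q\ge 2$ forces $\BB_{q-1}(\pa M)\neq\varnothing$, and $\BB_1^{p-1}(M)\supset\pa M\neq\varnothing$.

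Next we apply the two-sided join quotient formula \eqref{eq:join-quotient-two-sided} to the CW pairs $(\BB_p(M),\BB_1^{p-1}(M))$ and $(\BB_q(\pa M),\BB_{q-1}(\pa M))$. This gives
\[
J_{p,q}/L\simeq \frac{\BB_p(M)}{\BB_1^{p-1}(M)}*\frac{\BB_q(\pa M)}{\BB_{q-1}(\pa M)}.
\]
By \eqref{quotient} the first factor is $\simeq A_p$. By \eqref{eq:bar-B-def-general} with $X=\pa M$ and $n=q\ge2$, the second factor is $\simeq\BB_q(\pa M)\vee\Sigma\BB_{q-1}(\pa M)$. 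This last equivalence uses the null-homotopy of $\BB_{q-1}(\pa M)$ inside $\BB_q(\pa M)$ obtained by adding a fixed boundary point, exactly as in Lemma \ref{lem:contractible-in-next}. Since the join is homotopy invariant on CW complexes, these substitutions give the claimed homotopy equivalence.

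For the homology statement, we apply \eqref{joinwedge}, which gives $A_p*(\BB_q(\pa M)\vee\Sigma\BB_{q-1}(\pa M))\simeq (A_p*\BB_q(\pa M))\vee(A_p*\Sigma\BB_{q-1}(\pa M))$. Reduced homology of a wedge of well-pointed CW spaces is the direct sum, and relative homology of a cofibration pair equals the reduced homology of the quotient. This finishes the argument.

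The main obstacle is the first step. One must check precisely that $f^{-1}$ of the union equals $L$, and in particular that a configuration with at most $p+q-1$ total support points but exactly $p$ interior points pulls back into $\BB_p(M)*\BB_{q-1}(\pa M)$ rather than only into the first piece. This requires care when interior parts with fewer than $p$ points or boundary collisions occur. We would handle it by cases on the number of distinct interior support points of $\mu$ and boundary support points of $\nu$ in a preimage $[\mu,\nu,s]$, including the endpoint cases $s\in\{0,1\}$. A secondary point is that \eqref{eq:join-quotient-two-sided} is only quoted; if needed, it can be justified via $X*Y\simeq\Sigma(X\wedge Y)$ together with the relative cofibration sequences.
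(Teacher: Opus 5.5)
Your proposal is correct and follows essentially the same route as the paper. It transports the quotient to $\BB_p(M)*\BB_q(\pa M)$ modulo $L$ via the pushout from the proof of Lemma~\ref{ndrpair}, applies the two-sided join quotient formula \eqref{eq:join-quotient-two-sided}, and identifies the two factors using \eqref{quotient} and \eqref{eq:bar-B-def-general}. The preimage check you flag as the main obstacle is routine, and the paper simply asserts it: a point $[\mu,\nu,s]$ lies outside $L$ exactly when $0<s<1$, $\mu$ has $p$ distinct interior support points and $\nu$ has $q$ distinct boundary support points, and these are precisely the points mapping into the open stratum $\BB^{\circ}_{p,q}(M,\pa M)$.
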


\begin{proof}
From the proof of Lemma \ref{ndrpair}, we observe that we have the following pushout square
\[
\xymatrix{
\left(
\BB_1^{p-1}(M)*\BB_q(\partial M)
\right)
\cup
\left(
\BB_p(M)*\BB_{q-1}(\partial M)
\right)
\ar[r]\ar[d]
&
\BB_p(M)*\BB_q(\partial M)
\ar[d]^{f}
\\
\BB_{q-1}^{p}(M)\cup\BB_{q+1}^{p-1}(M)
\ar[r]
&
\BB_q^p(M),
}
\]
from which Lemma \ref{eq:Bpq-colimit} gives the equivalence
\[
\frac{\BB_q^p(M)}
{\BB_{q-1}^{p}(M)\cup\BB_{q+1}^{p-1}(M)}
\simeq
\frac{\BB_p(M)*\BB_q(\partial M)}
{\BB_1^{p-1}(M)*\BB_q(\partial M)
\;\cup\;
\BB_p(M)*\BB_{q-1}(\partial M)}.
\]
By \eqref{eq:join-quotient-two-sided}, the latter quotient is equivalent to
\[
\frac{\BB_p(M)}{\BB_1^{p-1}(M)}
*
\frac{\BB_q(\pa M)}{\BB_{q-1}(\pa M)}.
\]
Finally, \eqref{quotient} and \eqref{eq:bar-B-def-general} identify this space with
\[
\BB_p(M/\partial M)*
(\BB_q(\pa M)\vee \Sigma\BB_{q-1}(\pa M)).
\]
The homology inference is again obvious.
\end{proof}

\begin{proof}[Proof of Theorem \ref{thm:intro-general-homology}]
We prove the claimed decomposition by induction on \(i\), for
\(0\le i\le l\).
We first rewrite \eqref{eq:Ti-quotient-general} using the homotopy equivalence in Lemma \ref{lem:row-quotient-splitting} and the decomposition in Lemma \ref{eq:two-sided-mixed-quotient}, using distributivity of the join over wedges. This gives
\begin{equation}\label{eq:Ti-quotient-expanded}
\begin{aligned}
\frac{T_l^{(i)}}
     {T_l^{(i-1)}}
\simeq {}&
\BB_{l+i}(\pa M)
\vee
\Sigma\BB_{l+i-1}(\pa M)
\\
&\vee
\bigvee_{1\leq j<i}
\left(
A_j*\BB_{l+i-2j}(\pa M)
\right)
\\
&\vee
\bigvee_{1\leq j<i}
\left(
A_j*\Sigma\BB_{l+i-2j-1}(\pa M)
\right)
\\
&\vee
\left(
A_i*\BB_{l-i}(\pa M)
\right),
\end{aligned}
\end{equation}
The null-homotopy needed at this point preserves the rank filtration.
Indeed, choose \(y_0\in\pa M\) and put
\[
H(s,\sigma)=s\delta_{y_0}+(1-s)\sigma .
\]
If \(\sigma\in\BB_{l+i-1-2j}^{j}(M)\subset T_l^{(i-1)}\), then adding
one boundary slot gives
\(H(s,\sigma)\in\BB_{l+i-2j}^{j}(M)\subset T_l^{(i)}\).
Thus \(T_l^{(i-1)}\) is null-homotopic inside \(T_l^{(i)}\). Since the
inclusion is a cofibration by Lemma \ref{lem:compatible-triangulations},
\[
\frac{T_l^{(i)}}{T_l^{(i-1)}}\simeq
T_l^{(i)}\vee\Sigma T_l^{(i-1)}.
\]
We can therefore rewrite the preceding equivalence with one suspended term
on each side:
\begin{equation}\label{eq:Ti-quotient-regrouped}
\begin{aligned}
 T_l^{(i)}\vee\Sigma T_l^{(i-1)}\simeq {}&
\left[
\BB_{l+i}(\pa M)
\vee
\bigvee_{1\leq j\leq i}
\left(
A_j*\BB_{l+i-2j}(\pa M)
\right)
\right]
\\
&\vee
\Sigma
\left[
\BB_{l+i-1}(\pa M)
\vee
\bigvee_{1\leq j<i}
\left(
A_j*\BB_{l+i-2j-1}(\pa M)
\right)
\right].
\end{aligned}
\end{equation}
If wedge cancellation were valid, induction would identify $T_l^{(i)}(M)$ with the wedge
\begin{equation}\label{splitformula}\BB_{l+i}(\pa M)
\vee
\bigvee_{1\leq j\leq i}
\left(
A_j*\BB_{l+i-2j}(\pa M)
\right)
.\end{equation}
However, cancellation for the wedge product does not hold in general: there are spaces $A,B,C$ such that $A\vee B\simeq A\vee C$, but $B\not\simeq C$. After passing to reduced homology, all groups are finite-dimensional in each degree, and the inductive isomorphism for $T_l^{(i-1)}$ permits degreewise cancellation of the common suspended summand. Thus induction on $i$ gives
\begin{equation}\label{eq:Ti-homology-final}
\wt H_*(T_l^{(i)})
\cong
\wt H_*(\BB_{l+i}(\pa M))
\oplus
\bigoplus_{j=1}^{i}
\wt H_*(\BB_j(M/\pa M)*\BB_{l+i-2j}(\pa M)).
\end{equation}

Putting \(i=l\) yields the formula
$$\wt H_*(\BB_{2l}^{\pa}(M))
\cong
\wt H_*(\BB_{2l}(\pa M))
\oplus
\wt H_*(\BB_l(M/\pa M))
\notag\ \oplus
\bigoplus_{i=1}^{l-1}
\wt H_*(\BB_i(M/\pa M)*\BB_{2l-2i}(\pa M))
$$
Here we use the convention \(\BB_0(\pa M)=S^{-1}\) and $X*S^{-1} = X$ (see \eqref{convention}), which turns the final even mixed term with $l=i=j$ into \(\BB_l(M/\pa M)\), as stated. 

We can finally use the field-coefficient join identity
\begin{equation}\label{eq:join-homology}
\wt H_*(X*Y;\F)
\cong
\Sigma\bigl(\wt H_*(X;\F)\otimes_{\F}\wt H_*(Y;\F)\bigr),
\end{equation}
to obtain the even formula \eqref{eq:intro-general-even}.
Putting \(i=l-1\) in \eqref{eq:Ti-homology-final} and then using \eqref{eq:join-homology} yields the odd formula \eqref{eq:intro-general-odd}.  
\end{proof}

We continue to write $\wt P_X(t;\F)$ for the reduced Poincar\'e
polynomial. When the field $\F$ need not be specified, we write
$\wt P_X(t)$.

\begin{cor}\label{poincarepolynomial} 
Set $M^+ = M/\pa M$. As a corollary of Theorem \ref{thm:intro-general-homology} and its proof, we have
\begin{equation*}
\begin{aligned}
\widetilde P_{\BB_{2l}^{\partial}(M)}(t)
={}&
\widetilde P_{\BB_{2l}(\pa M)}(t)
+
\widetilde P_{\BB_l(M^+)}(t)+
t\sum_{j=1}^{l-1}
\widetilde P_{\BB_j(M^+)}(t)\,
\widetilde P_{\BB_{2l-2j}(\pa M)}(t).
\end{aligned}
\end{equation*}
\begin{equation*}
\widetilde P_{\BB_{2l-1}^{\partial}(M)}(t)
=
\widetilde P_{\BB_{2l-1}(\partial M)}(t)
+
t\sum_{i=1}^{l-1}
\widetilde P_{\BB_i(M^+)}(t)\,
\widetilde P_{\BB_{2l-2i-1}(\partial M)}(t).
\end{equation*}
\end{cor}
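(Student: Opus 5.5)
The plan is to read off the Poincaré series directly from the homology decomposition in Theorem \ref{thm:intro-general-homology}, or equivalently from the intermediate identity \eqref{eq:Ti-homology-final} in its proof. Reduced Poincaré polynomials are additive over direct sums of graded vector spaces. Since all spaces involved are compact CW complexes, each $\wt H_k$ is finite-dimensional and vanishes in large degrees, so every series is a polynomial and additivity applies termwise.

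\textbf{Step 1: the even case.} Specialize \eqref{eq:Ti-homology-final} at $i=l$. This gives
$$\wt H_*(\BB_{2l}^\pa(M))\cong \wt H_*(\BB_{2l}(\pa M))\oplus \bigoplus_{j=1}^{l}\wt H_*(\BB_j(M^+)*\BB_{2l-2j}(\pa M)).$$
The term with $j=l$ involves $\BB_0(\pa M)$. Under the augmented convention \eqref{convention} this join is just $\BB_l(M^+)$, so it contributes $\wt P_{\BB_l(M^+)}(t)$ with no extra factor of $t$. This accounts for the isolated second summand in the formula.

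\textbf{Step 2: the join terms.} For $1\le j\le l-1$, both factors of the join are genuine nonempty spaces. The field-coefficient join identity \eqref{eq:join-homology} gives $\wt H_k(X*Y)\cong\bigoplus_{a+b=k-1}\wt H_a(X)\otimes\wt H_b(Y)$. Taking dimensions and forming generating functions, we get $\wt P_{X*Y}(t)=t\,\wt P_X(t)\wt P_Y(t)$. Summing over $j$ produces the displayed sum.

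\textbf{Step 3: the odd case.} Specialize \eqref{eq:Ti-homology-final} at $i=l-1$. This gives a boundary term $\BB_{2l-1}(\pa M)$ together with joins $\BB_j(M^+)*\BB_{2l-1-2j}(\pa M)$ for $1\le j\le l-1$. Here $2l-1-2j\ge1$, so no augmented term arises, and Step 2 applies to every join. Equivalently, one can start directly from \eqref{eq:intro-general-odd} and take dimensions degreewise.

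\textbf{Main point of care.} The only real subtlety is the boundary convention $\BB_0=S^{-1}$. If one wrote $\wt H_*(S^{-1})$ as $\F$ in degree $-1$, the rule "$t$ times the product" would still give $t\cdot t^{-1}\wt P_{\BB_l(M^+)}=\wt P_{\BB_l(M^+)}$, which is consistent. We therefore either isolate the $j=l$ term as in Step 1, or note this identity, and the corollary follows.
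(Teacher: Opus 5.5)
Your proposal is correct and follows the paper's own route. The paper gives no separate proof: the corollary comes from taking dimensions in \eqref{eq:Ti-homology-final} at $i=l$ and $i=l-1$, applying the field-coefficient join identity \eqref{eq:join-homology} in the form $\wt P_{X*Y}(t)=t\,\wt P_X(t)\wt P_Y(t)$, and absorbing the $j=l$ term through the convention $\BB_0(\pa M)=S^{-1}$, which is exactly the bookkeeping you carry out, including the endpoint check.
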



\section{Disks and hemispheres}
\label{sec:hemispheres}

Let \(H^d=\mathbb S^d_+\) be the upper closed hemisphere, topologically homeomorphic to the closed unit \(d\)-disk.  Then \(\pa H^d=S^{d-1}\) and \(H^d/\pa H^d\simeq S^d\).
Applying Corollary 
\ref{poincarepolynomial} to $H^d$ we obtain  
\begin{equation}\label{poincareholed}
\begin{aligned}
\widetilde P_{\BB_{2l}^{\partial}(H^d)}(t)
={}&
\widetilde P_{\BB_{2l}(S^{d-1})}(t)
+
\widetilde P_{\BB_l(S^d)}(t)+
t\sum_{j=1}^{l-1}
\widetilde P_{\BB_j(S^d)}(t)\,
\widetilde P_{\BB_{2l-2j}(S^{d-1})}(t).
\end{aligned}
\end{equation}
and the analogous formula holds for the odd case $\BB_{2l-1}^\pa (H^d)$. 
We recall how the Poincar\'e polynomial for barycenters of spheres is computed.

\begin{lem}\label{poincarebarycenterspheres}
\[
\wt P_{\BB_n(S^d)}(t;\Q)=
\begin{cases}
t^{n(d+1)-1},& d\text{ odd and } n\ge 2\\
0,& d\text{ even and }n\ge2.\\
t^d,& n=1.
\end{cases}
\] 
\end{lem}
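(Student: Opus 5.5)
The plan is to reduce everything to the symmetric-product description of barycenter spaces due to Kallel--Karoui. The case \(n=1\) is immediate, since \(\BB_1(S^d)=S^d\). For \(n\ge2\), I will use the cofibration sequence \(\BB_{n-1}(X)\hookrightarrow\BB_n(X)\to\BB_n(X)/\BB_{n-1}(X)\) together with the splitting \eqref{eq:bar-B-def-general}, and the identification from \cite{KallelKaroui2011} of the quotient \(\BB_n(X)/\BB_{n-1}(X)\), up to suspension, with a symmetric smash product:
\[
\Sigma\bigl(\BB_n(X)/\BB_{n-1}(X)\bigr)\simeq \Sigma^{n}\,\bigl(X^{\wedge n}/\mathfrak S_n\bigr),
\]
that is, the \(n\)-fold symmetric join quotient. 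Rational homology of a finite-group quotient is the invariant part, \(\wt H_*(X^{\wedge n}/\mathfrak S_n;\Q)\cong(\wt H_*(X;\Q)^{\otimes n})^{\mathfrak S_n}\), where \(\mathfrak S_n\) acts with the Koszul sign on the graded tensor factors.

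For \(X=S^d\) the group \(\wt H_*(X;\Q)\) is a single class \(u\) in degree \(d\). The group \(\mathfrak S_n\) acts on \(u^{\otimes n}\) by the sign character twisted by \((-1)^d\), and one must also include the sign with which \(\mathfrak S_n\) permutes the join (suspension) coordinates. Combining the two, the \(\mathfrak S_n\)-invariants are one-dimensional exactly when \(d+1\) is even, i.e.\ \(d\) odd, and are zero when \(d\) is even (for \(n\ge2\)). In the odd case the surviving class sits in degree \(nd+(n-1)=n(d+1)-1\), the top dimension of \(\BB_n(S^d)\). So, rationally,
\[
\wt H_*\bigl(\BB_n(S^d)/\BB_{n-1}(S^d);\Q\bigr)=
\begin{cases}\Q[n(d+1)-1],& d \text{ odd},\\ 0,& d \text{ even}.\end{cases}
\]

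Next I run the induction on \(n\) via the splitting \(\wt H_r(\BB_n/\BB_{n-1})\cong\wt H_r(\BB_n)\oplus\wt H_{r-1}(\BB_{n-1})\).

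For \(n=2\), the quotient class must account for \(\wt H_{r-1}(\BB_1)=\Q[d+1]\) after shifting. This forces the quotient to contain \(\Q[d+1]\). Hence the sign computation must be redone carefully: the true statement is that the quotient contains both the suspension of \(\BB_{n-1}\) and the new top class. I will therefore compute the rational quotient homology as \(\wt H_{*-1}(\BB_{n-1})\oplus(\text{invariants})\), and check consistency against the Euler characteristic formula \eqref{eq:ordinary-euler}. With \(\chi(S^d)=2\) for \(d\) even, this formula gives \(\chi(\BB_n)=1\), matching the claimed vanishing of reduced homology. With \(\chi(S^d)=0\) for \(d\) odd, it gives \(\chi(\BB_n)=0\), matching a single class in odd degree \(n(d+1)-1\). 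Alternatively, the cleanest route is to quote \cite[Corollary 1.4]{KallelKaroui2011} directly: \(\BB_n(S^d)\simeq\Sigma^{n-1}\SP^n(S^d)/\SP^{n-1}(S^d)\)-type formulas, together with the known rational homology of \(\SP^n(S^d)\), which is \(\Q[u]\) truncated for \(d\) even and an exterior algebra for \(d\) odd. The differences of these give exactly the stated polynomials.

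The main obstacle is the sign bookkeeping in the \(\mathfrak S_n\)-invariants, namely combining the Koszul sign from degree \(d\) with the join/simplex orientation sign. I will calibrate this using \(n=2\), \(d=1\), where \(\BB_2(S^1)\simeq S^3\) is known, and using the Euler characteristic check above.
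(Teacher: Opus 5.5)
Your argument has a genuine gap at its central step. The identification $\Sigma\bigl(\BB_n(X)/\BB_{n-1}(X)\bigr)\simeq\Sigma^{n}\bigl(X^{\wedge n}/\mathfrak S_n\bigr)$ is false, and the problem is not sign bookkeeping. For $X=S^1$ and $n=2$, the left side is $\Sigma(S^3\vee S^2)$, using $\BB_2(S^1)\simeq S^3$ and \eqref{eq:bar-B-def-general}. The right side is a point, because $\overline{\SP}^{\,2}(S^1)=\SP^2(S^1)/S^1$ is contractible. Your own $n=2$ check detects this: the quotient must contain the shifted class coming from $\wt H_{*-1}(\BB_1)$. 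The rational homology of your right-hand side is concentrated in degree $n(d+1)$, so no invariant computation there can produce that class.

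The proposed repair, writing the quotient homology as $\wt H_{*-1}(\BB_{n-1})\oplus(\text{invariants})$, assumes that the invariants compute $\wt H_*(\BB_n(S^d);\Q)$, which is exactly what is to be proved. The Euler characteristic \eqref{eq:ordinary-euler} only constrains an alternating sum, so it cannot determine the Betti numbers. The fallback formula $\BB_n(S^d)\simeq\Sigma^{n-1}\SP^n(S^d)/\SP^{n-1}(S^d)$ is also wrong. Since $\SP^n(S^d)\simeq_{\Q}S^d$ for $d$ odd and $\SP^n(S^d)\simeq_{\Q}P^nS^d$ for $d$ even, it would make $\BB_n(S^d)$ (for $n\ge2$) rationally trivial for $d$ odd and rationally $S^{n(d+1)-1}$ for $d$ even. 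That is the reverse of the lemma, and it contradicts $\BB_2(S^1)\simeq S^3$.

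The missing idea is the exact form of the Kallel--Karoui theorem the paper uses: $\Sigma\BB_n(X)\simeq\overline{\SP}^{\,n}(\Sigma X)=(\Sigma X)^{\wedge n}/\mathfrak S_n$. This describes $\BB_n(X)$ itself, not the filtration quotient. Also, $X$ is suspended before the symmetric smash is formed, so $\mathfrak S_n$ permutes the suspension coordinates together with the factors. That is the ``join sign'' you were reaching for, and it is what shifts the parity from $d$ to $d+1$.

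Your invariant calculation is actually correct once it is applied to this space. For $n\ge2$, $\wt H_*\bigl((S^{d+1})^{\wedge n}/\mathfrak S_n;\Q\bigr)\cong\bigl(\wt H_*(S^{d+1};\Q)^{\otimes n}\bigr)^{\mathfrak S_n}$ is one-dimensional in degree $n(d+1)$ when $d$ is odd and zero when $d$ is even. Desuspending once gives the lemma directly, with no induction on $n$ and no use of the splitting. This is the paper's proof. It expresses the same computation through the rational models $\SP^nS^{2k+1}\simeq_{\Q}S^{2k+1}$ and $\SP^nS^{2k}\simeq_{\Q}P^nS^{2k}$, so that $\overline{\SP}^{\,n}(S^{d+1})$ is rationally $S^{n(d+1)}$ for $d$ odd and a point for $d$ even.
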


\begin{proof}
By \cite{KallelKaroui2011}, for every connected compact triangulable space \(X\), with a chosen basepoint $x_0\in X$, and for every \(n\ge1\), there is an identification
\begin{equation}\label{eq:stable-bary-symmetric}
\Sigma \BB_n(X)\simeq \overline{\SP}^{n}(\Sigma X).
\end{equation}
where $\Sigma X$ means the suspension of a space $X$, and \(\overline{\SP}^{n}(Y):=\SP^n(Y)/\SP^{n-1}(Y)\) is the reduced symmetric product. Here $\SP^n(Y)=Y^n/\mathfrak S_n$ is the quotient of $Y^n$ by the permutation action of the symmetric group on $n$-letters, and $\SP^{n-1}(Y)$ includes in $\SP^n(Y)$ by adding to the class of a tuple $[y_1,\ldots, y_{n-1}]$ the fixed point $x_0$. From \eqref{eq:stable-bary-symmetric}, it follows that
\begin{equation*}
\widetilde P_{\BB_n(S^d)}(t)
=
t^{-1}\widetilde P_{\overline{\SP}^{\,n}(S^{d+1})}(t).
\end{equation*}
The case $d=1$ can be made explicit since it is known that $\BB_n(S^1)\simeq S^{2n-1}$, while
$\SP^n(S^2)=\mathbb C P^n$ is complex projective space, so $\overline\SP^n(S^2)=S^{2n}$, and indeed
$$\wt P_{S^{2n-1}}(t) = t^{2n-1} = t^{-1}t^{2n}
=t^{-1}\wt P_{S^{2n}}(t).$$
For $d>1$, the following identities are quite classical but very well explained in Felix and Tanr\'e \cite{felixtanre}
\[
\SP^n S^{2k+1}\simeq_{\mathbb Q} S^{2k+1},
\ \ \hbox{and}\ \ 
\SP^n S^{2k}\simeq_{\mathbb Q} P^nS^{2k}
\]
where $P^nS^{2k}$ is the rational space having
$
(\Lambda a/(a^{n+1}),0)
$ as model, with $a$ of degree $2k$. (Observe that
$P^nS^2=\mathbb{C}P^n$ and
$
P^\infty S^{2k}=K(\mathbb Q,2k))
$. From this we deduce that rationally again
\[
\overline\SP^n S^{2k+1}\simeq_{\mathbb Q} *
\ \ \hbox{and}\ \ 
\overline\SP^n S^{2k}\simeq_{\mathbb Q} S^{2kn}\ \ 
\hbox{$k\geq 1$ and $n\geq 2$}
\]
and so, for $n\geq 2$
\[
\widetilde P_{\overline{\SP}^{\,n}(S^d)}(t)
=
\begin{cases}
0, & d \text{ odd},\\[2mm]
t^{dn}, & d \text{ even}.
\end{cases}
\]
This is precisely our statement after desuspending. The statement for $n=1$ is obvious since $\SP^1(X) = \overline\SP^1(X) = X$.
\end{proof}

Feeding Lemma \ref{poincarebarycenterspheres} to \eqref{poincareholed} proves immediately Corollary \ref{closeddisks}.

In particular, for the two-dimensional hemisphere \(H^2=D^2\),
\begin{align*}
\wt P_{\BB_{2s}^{\pa}(D^2)}(t;\Q)
&=
\begin{cases}t^3+t^2,&s=1,\\ t^{4s-1}+t^{4s-2},&s\ge2,
\end{cases}\notag\\
\wt P_{\BB_{2s-1}^{\pa}(D^2)}(t;\Q)
&=
\begin{cases}t,&s=1,\\ t^{4s-3}+t^{4s-4},&s\ge2,
\end{cases}
\end{align*}

The odd dimensional analog of Corollary \ref{closeddisks} is given below.

\begin{prop}\label{odddimensionalcase}
    Let \(M\) be the closed \(d\)-dimensional disk, equivalently the
hemisphere \(H^d\), and assume that \(d\ge3\) is odd. Then, with rational
coefficients,
\begin{align*}
\widetilde P_{\BB_{2l}^{\partial}(M)}(t;\mathbb Q)
&=
t^{(d+1)l-1},
\\
\widetilde P_{\BB_{2l-1}^{\partial}(M)}(t;\mathbb Q)
&=
\begin{cases}
t^{(d+1)l-2}, & l\ge2,\\[1mm]
t^{d-1}, & l=1.
\end{cases}
\end{align*}
\end{prop}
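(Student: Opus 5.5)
The plan is to feed Lemma~\ref{poincarebarycenterspheres} into Corollary~\ref{poincarepolynomial} for $M=H^d$, exactly as for Corollary~\ref{closeddisks}, but with the parities reversed. Here $\partial M=S^{d-1}$ with $d-1$ even, and $M^+=M/\partial M\simeq S^d$ with $d$ odd. Lemma~\ref{poincarebarycenterspheres} then gives the following rational values:
\[
\widetilde P_{\BB_n(S^{d-1})}(t)=
\begin{cases}
0, & n\ge2,\\
t^{d-1}, & n=1,
\end{cases}
\qquad
\widetilde P_{\BB_n(S^{d})}(t)=t^{n(d+1)-1}\quad (n\ge1).
\]
For the second formula, the case $n=1$ reads $t^d=t^{(d+1)-1}$, so one formula covers every $n\ge1$. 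The whole proof therefore reduces to checking which terms of Corollary~\ref{poincarepolynomial} survive.

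\emph{Even order $2l$.} The boundary term $\widetilde P_{\BB_{2l}(S^{d-1})}$ vanishes, since $2l\ge2$. In each mixed term the boundary factor has index $2l-2j\ge2$ for $1\le j\le l-1$, so every mixed term vanishes as well. Only $\widetilde P_{\BB_l(S^d)}(t)=t^{(d+1)l-1}$ survives, and this holds uniformly in $l\ge1$.

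\emph{Odd order $2l-1$.} For $l=1$ the sum is empty, and the boundary term is $\widetilde P_{\BB_1(S^{d-1})}=t^{d-1}$. For $l\ge2$ the boundary term vanishes. A mixed term with index $i$ has boundary factor indexed by $2l-2i-1$, which is odd and at least $1$; it is nonzero only when $2l-2i-1=1$, that is, when $i=l-1$. That single term equals
\[
t\cdot t^{(l-1)(d+1)-1}\cdot t^{d-1}=t^{(d+1)l-2},
\]
which is the claimed formula.

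No real obstacle is expected. The only care needed is to confirm that the uniform formula $t^{n(d+1)-1}$ is valid at $n=1$ for the interior factor, and that the boundary factor is nonzero only at index $1$. Both checks are immediate from Lemma~\ref{poincarebarycenterspheres}.
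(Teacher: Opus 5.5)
Your proposal is correct and matches the paper's proof essentially verbatim. You specialize Corollary~\ref{poincarepolynomial} to $\partial M=S^{d-1}$ (even-dimensional) and $M^+\simeq S^d$ (odd-dimensional), and use Lemma~\ref{poincarebarycenterspheres} to kill every boundary factor of index $\ge 2$; then only $\widetilde P_{\BB_l(S^d)}$ survives in even order, while in odd order only the boundary term ($l=1$) or the single mixed term $i=l-1$ ($l\ge2$) survives, which is exactly the paper's argument.
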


\begin{proof}
Since \(d-1\) is even, Lemma \ref{poincarebarycenterspheres} gives
\(\wt P_{\BB_n(S^{d-1})}(t;\Q)=0\) for \(n\ge2\), whereas
\(\wt P_{\BB_1(S^{d-1})}(t;\Q)=t^{d-1}\). In the even formula
\eqref{poincareholed}, every mixed term therefore vanishes, and the remaining
term is \(\wt P_{\BB_l(S^d)}(t;\Q)=t^{(d+1)l-1}\). In the odd formula,
the boundary term is nonzero only when \(l=1\). For \(l\ge2\), only the
summand with \(i=l-1\) survives, giving
\[
t\,\wt P_{\BB_{l-1}(S^d)}(t;\Q)\,
\wt P_{\BB_1(S^{d-1})}(t;\Q)
=t^{(d+1)l-2}.
\]
This proves both formulas.
\end{proof}

 
\section{Specialization to orientable surfaces}
\label{sec:surface-specialization}

Let \(\Si\) be a compact connected orientable surface of genus
\(\mathfrak g\) with \(b\geq1\) boundary components. The key quotient
identification we need is
\[
\Si^+
\simeq
\Si/\pa\Si
\simeq
S_{\mfg}\vee\bigvee^{\,b-1}S^1,
\]
where \(S_{\mfg}\) is the closed orientable surface of genus
\(\mathfrak g\). Set
$
N=2\mathfrak g+b-1,
$
and define
\begin{equation}\label{eq:surface-Q-general}
Q_{N,n}(t)
:=
\wt P_{\BB_n(\Si^+)}(t;\Q),
\qquad
R_{b,n}(t)
:=
\wt P_{\BB_n(\sqcup_{a=1}^{b}S^1)}(t;\Q).
\end{equation}

According to Corollary~\ref{poincarepolynomial}, the Poincar\'e
polynomials for the weighted barycenter spaces of \(\Si\) are immediately given by their expressions in Corollary \ref{surface-case}.
We now compute both polynomials \(Q_{N,n}(t)\) and \(R_{b,n}(t)\).

\begin{lem}\label{QNn}
If \(N\geq1\), then
\begin{equation}\label{eq:intro-Q-rational-expanded}
Q_{N,n}(t)
=
\binom{N+n-1}{n}t^{2n-1}
+
\binom{N+n-2}{n-1}t^{2n}.
\end{equation}
If \(N=0\), equivalently if \((\mathfrak g,b)=(0,1)\), then
\[
Q_{0,1}(t)=t^2,
\qquad
Q_{0,n}(t)=0
\quad\text{for }n\geq2.
\]
\end{lem}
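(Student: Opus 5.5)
The plan is to reduce the computation to rational symmetric products, using the identification \eqref{eq:stable-bary-symmetric} from the proof of Lemma \ref{poincarebarycenterspheres}. Since $\Si^+$ is a connected compact triangulable space, we have $\Sigma\BB_n(\Si^+)\simeq\overline{\SP}^{\,n}(\Sigma\Si^+)$. It follows that
\[
Q_{N,n}(t)=t^{-1}\,\wt P_{\overline{\SP}^{\,n}(\Sigma\Si^+)}(t;\Q).
\]
The first step is to identify $\Sigma\Si^+$ up to homotopy. We have $\Si^+\simeq S_{\mfg}\vee\bigvee^{b-1}S^1$. The attaching map of the top cell of $S_{\mfg}$ is a product of commutators, and this becomes null after one suspension, so $\Sigma S_{\mfg}\simeq S^3\vee\bigvee^{2\mfg}S^2$; this remains valid for $\mfg=0$, where $\Sigma S^2=S^3$. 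Since suspension commutes with wedges, we obtain
\[
\Sigma\Si^+\simeq S^3\vee\bigvee^{N}S^2,\qquad N=2\mfg+b-1.
\]

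The second step computes the rational homology of $\overline{\SP}^{\,n}(Y)$ for a connected wedge of spheres $Y$, which is the heart of the argument. I would invoke the classical theorem of Dold and Macdonald: with rational coefficients, $H_*(\SP^n Y;\Q)$ is the part of word length at most $n$ in the free graded-commutative algebra $\Lambda(\wt H_*(Y;\Q))$. Here the length-zero part is the class of the basepoint, and the inclusion $\SP^{n-1}Y\hookrightarrow\SP^nY$ (which adds the basepoint) corresponds to the inclusion of lengths $\le n-1$. This map is injective in homology; by Steenrod's splitting it is even split. Hence the long exact sequence of the pair gives
\[
\wt H_*(\overline{\SP}^{\,n}Y;\Q)\cong\Lambda^{(n)}\bigl(\wt H_*(Y;\Q)\bigr),
\]
the span of monomials of word length exactly $n$. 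This is the step I expect to be the main obstacle. One must use the correct Koszul sign convention, with odd classes anticommuting so that their squares vanish, and one must check that the reduced quotient is detected exactly by the length-$n$ summand. The check is the sanity case $Y=S^{2k}$ and $Y=S^{2k+1}$, which must reproduce Lemma \ref{poincarebarycenterspheres}, and it does.

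The third step is a count. Here $\wt H_*(\Sigma\Si^+;\Q)$ has $N$ even generators $x_1,\dots,x_N$ of degree $2$ and one odd generator $y$ of degree $3$, with $y^2=0$. The length-$n$ monomials are of two kinds. The monomials $x^{\alpha}$ with $|\alpha|=n$ number $\binom{N+n-1}{n}$ and lie in degree $2n$. The monomials $y\,x^{\beta}$ with $|\beta|=n-1$ number $\binom{N+n-2}{n-1}$ and lie in degree $2n+1$. Multiplying by $t^{-1}$ gives exactly \eqref{eq:intro-Q-rational-expanded}. When $N=0$ the only generator is $y$, so the only nonzero length is $n=1$, in degree $3$. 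This yields $Q_{0,1}(t)=t^2$ and $Q_{0,n}(t)=0$ for $n\ge2$, consistent with Lemma \ref{poincarebarycenterspheres} for $\Si^+\simeq S^2$.
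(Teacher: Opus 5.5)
Your proposal is correct and follows essentially the same route as the paper: reduce via $\Sigma\BB_n(\Si^+)\simeq\overline{\SP}^{\,n}(S^3\vee\bigvee^N S^2)$, identify $\wt H_*(\overline{\SP}^{\,n}Y;\Q)$ with the length-$n$ part of the free graded-commutative algebra on $\wt H_*(Y;\Q)$, and count the monomials $x^\alpha$ and $y\,x^\beta$. The paper simply asserts $\Sigma\Si^+\simeq S^3\vee\bigvee^N S^2$, and your justification of it is fine, although the rational count only depends on $\wt H_*(\Sigma\Si^+;\Q)$.
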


\begin{proof}
An immediate consequence of
\eqref{eq:stable-bary-symmetric} 
\begin{equation}\label{bnsigma+}
\Sigma\BB_n(\Si^+)\simeq
\overline{\SP}^{\,n}
\left(S^3\vee\bigvee^N S^2\right),\ \ \ \ N = 2\mfg  + b-1
\end{equation}
It follows that
\begin{eqnarray*}
Q_{N,n}(t)
=
\wt P_{\BB_n(\Si^+)}(t;\Q)
=
t^{-1}
\wt P_{\overline{\SP}^{\,n}
(S^3\vee\bigvee^{N}S^2)}(t;\Q).
\end{eqnarray*}
The rational Dold--Thom calculation
\cite{Dold1995,felixtanre} shows that $H_*(\overline\SP^n(Y),\Q)$ is a subvector space of the graded symmetric algebra
\[
H_*(\SP^\infty(Y);\Q)
\cong
\operatorname{Sym}_{\mathrm{gr}}
\bigl(\wt H_*(Y;\Q)\bigr),
\]
By definition, this is  polynomial on even-degree
generators and exterior on odd-degree generators. For
$
Y=S^3\vee\bigvee^{N}S^2
$, this algebra becomes
$$
\Lambda(e_3)\otimes\Q[u_1,\ldots,u_N],
\qquad
|e_3|=3,\ 
|u_a|=2
\quad\text{(homological degree)}.
$$
To describe $H_*(\overline\SP^n(Y),\Q)$ in this algebra, we assign to every generator filtration degree one and extend this degree additively to products (i.e. $\deg(ab)=\deg a+\deg b$).
The infinite symmetric product takes wedges to products, and the filtration
is additive. 
By the theory of Dold, or by the Steenrod splitting in homology for
symmetric products, the homology
$
H_*(\overline{\SP}^{\,n}(Y);\Q)
$
corresponds precisely to the elements of filtration degree \(n\). 
These are of two types: (1) the
first type consists of products
$\displaystyle 
u_1^{i_1}\cdots u_N^{i_N},$ $
\sum_{a=1}^{N}i_a=n.$
These elements have homological degree \(2n\), and their number is
$
\binom{N+n-1}{n}
$. And (2) the second type consists of
$\displaystyle 
e_3u_1^{i_1}\cdots u_N^{i_N},$ $
\sum_{a=1}^{N}i_a=n-1
$. These elements have homological degree
$3+2(n-1)=2n+1$, 
and their number is
$\binom{N+n-2}{n-1}$.
With this count, and after desuspending once (i.e. multiplying by $t^{-1}$), we obtain
\begin{equation}\label{eq:Q-rational-surface-expanded}
Q_{N,n}(t)
=
\binom{N+n-1}{n}t^{2n-1}
+
\binom{N+n-2}{n-1}t^{2n}.
\end{equation}
This is what we wanted to prove. 
If \((\mathfrak g,b)=(0,1)\), that is if $N=0$, 
\(\Si^+\simeq S^2\), hence
$Q_{0,1}(t)=t^2$ and
$Q_{0,n}(t)=0$ for $n\geq2.$
\end{proof}

For the application in Section \ref{sec:variational-jump}, the coefficient
field is \(\mathbb F_2\).  Here the rational graded-symmetric-algebra
argument is not valid: the mod-two cohomology of
\(\SP^\infty(S^3)\simeq K(\mathbb Z,3)\) contains the additional Nakaoka
generators produced by iterated Steenrod squares. Since $H^*(\SP^\infty (X),\mathbb F_2)$ is a bigraded algebra, we define the bivariate Poincar\'e series 
$$ {\mathbb P}_{\SP^\infty (X)}(t,x) := \sum a_{ij} t^ix^j$$
where $t$ records the ordinary cohomological degree, $x$ records the symmetric-product filtration degree, and $a_{ij} = \dim_{\mathbb F_2} H^{i,j}(\SP^\infty (X);\mathbb F_2)$. We will write 
$$[x^n]\mathbb P_{\SP^\infty X}(t,x) = \sum a_{i,n}t^i$$
the polynomial term in $t$ multiplying $x^n$.

\begin{lem}\label{lem:Q-mod-two}
Define
\[
Q^{(2)}_{N,n}(t):=
\wt P_{\BB_n(\Si^+)}(t;\mathbb F_2).
\]
Then, for every \(N\ge0\) and \(n\ge1\),
\begin{equation}\label{eq:Q-mod-two}
Q^{(2)}_{N,n}(t)
=
t^{-1}[x^n]
\frac{1}{(1-xt^2)^N}
\prod_{r\ge0}
\frac{1}{1-x^{2^r}t^{2^{r+1}+1}}.
\end{equation}
Only the factors with \(2^r\le n\) contribute to \([x^n]\), so
\eqref{eq:Q-mod-two} is a finite, explicit coefficient formula for each
fixed \(n\).  In low orders it gives
\begin{equation}\label{eq:Q-mod-two-low-orders}
Q^{(2)}_{N,1}(t)=Nt+t^2,
\qquad
Q^{(2)}_{N,2}(t)
=\binom{N+1}{2}t^3+(N+1)t^4+t^5.
\end{equation}
In particular,
\(Q^{(2)}_{0,2}(t)=t^4+t^5\).
\end{lem}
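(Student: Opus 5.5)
We prove the formula by running the argument of Lemma \ref{QNn} with $\F_2$ coefficients. The main change is that the rational graded-symmetric algebra is replaced by Nakaoka's description of the mod-two homology of $\SP^\infty$ of a sphere. By \eqref{bnsigma+} we have $\Sigma\BB_n(\Si^+)\simeq\overline{\SP}^{\,n}(Y)$ with $Y=S^3\vee\bigvee^N S^2$. Hence
\[
Q^{(2)}_{N,n}(t)=t^{-1}\,\wt P_{\overline{\SP}^{\,n}(Y)}(t;\F_2),
\]
and it suffices to compute the Poincar\'e polynomial of the reduced symmetric product in filtration exactly $n$.

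The key steps, in order, are as follows.

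First, we use Steenrod's homology splitting for symmetric products, valid with any field coefficients. It gives $H_*(\SP^\infty(Y))\cong\bigoplus_{n\ge0}\wt H_*(\overline{\SP}^{\,n}(Y))$, so $\wt H_*(\overline{\SP}^{\,n}(Y);\F_2)$ is the filtration-$n$ part of the bigraded homology of $\SP^\infty(Y)$. (Homology and cohomology have the same dimensions over a field, so this matches the bigraded series $\mathbb P$.)

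Second, we use that $\SP^\infty$ converts wedges into products compatibly with the filtration, since $\SP^n(A\vee B)=\coprod_{i+j=n}\SP^i A\times\SP^j B$. Therefore the bivariate series $\mathbb P_{\SP^\infty(Y)}(t,x)$ factors as
\[
\mathbb P_{\SP^\infty(S^3)}(t,x)\cdot\mathbb P_{\SP^\infty(S^2)}(t,x)^N.
\]

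Third, we compute the factors. For $S^2$, $\SP^n(S^2)=\mathbb CP^n$ and $\overline{\SP}^{\,n}(S^2)=S^{2n}$. Thus $\SP^\infty(S^2)=\mathbb CP^\infty$ has one class in each bidegree $(2n,n)$, giving the factor $1/(1-xt^2)$.

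For $S^3$, we invoke Nakaoka's computation: $H^*(K(\Z,3);\F_2)$ is the polynomial algebra on the classes
\[
\iota_3,\quad Sq^2\iota_3,\quad Sq^4Sq^2\iota_3,\ \ldots,\quad Sq^{2^r}\cdots Sq^2\iota_3 .
\]
The $r$-th generator has degree $2^{r+1}+1$ and symmetric-product filtration $2^r$. This filtration statement is the content of Nakaoka's analysis of $\SP^{2^r}(S^3)$: each Steenrod square doubles the filtration in which a class first appears. The resulting factor is
\[
\prod_{r\ge0}\frac{1}{1-x^{2^r}t^{2^{r+1}+1}}.
\]
Note that $\iota_3$ squares to zero in $H^*(K(\Z,3);\F_2)$, but $\iota_3^2=Sq^3\iota_3=Sq^1Sq^2\iota_3$ is the Bockstein of $Sq^2\iota_3$. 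The clean statement is that $H^*(K(\Z,3);\F_2)$ is polynomial on admissible sequences of excess less than $3$ not ending in $Sq^1$, and among these the ones with an honest filtration are exactly the iterated $Sq^{2^i}$ above. Extracting $[x^n]$ and multiplying by $t^{-1}$ then yields \eqref{eq:Q-mod-two}.

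The finiteness claim is immediate, because a factor contributes to $[x^n]$ only if $2^r\le n$. For the low orders:
\begin{itemize}
\item At $n=1$ the coefficient of $x$ is $Nt^2+t^3$, giving $Q^{(2)}_{N,1}(t)=Nt+t^2$.
\item At $n=2$ the coefficient of $x^2$ collects $\binom{N+1}{2}t^4$ from $u$'s, $Nt^5$ from $u\cdot\iota_3$, $t^6$ from $\iota_3^2$, and $t^5$ from $Sq^2\iota_3$. Dividing by $t$ gives $\binom{N+1}2t^3+(N+1)t^4+t^5$.
\end{itemize}
As a check, for $N=0$ this gives $t^4+t^5$, consistent with $\overline{\SP}^{\,2}(S^3)=\Sigma^3\mathbb RP^2$ up to suspension shift, which has mod-two homology in degrees $5,6$.

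The main obstacle is the third step: justifying that the multiplicative generators of $H_*(\SP^\infty S^3;\F_2)$ carry exactly the filtrations $2^r$. One must also check that the product structure is compatible with the filtration grading, so that the bivariate series is genuinely the product above. The plan is to cite Nakaoka and Milgram for the bigraded structure, and to verify the low-order cases directly against $\overline{\SP}^{\,2}(S^3)\simeq\Sigma^3\mathbb RP^2$.
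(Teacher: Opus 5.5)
Your proposal is correct and follows essentially the same route as the paper. You desuspend via \eqref{bnsigma+}, use the Steenrod splitting to isolate exact filtration $n$ in $\SP^\infty(S^3\vee\bigvee^N S^2)\simeq K(\Z,3)\times(\mathbb CP^\infty)^N$, and give the polynomial generators $u_a$ and $\mathrm{Sq}^{2^r}\cdots\mathrm{Sq}^2\iota_3$ filtrations $1$ and $2^r$; the paper cites Kallel--Karoui for the latter rather than Nakaoka--Milgram. Two slips to fix: $\iota_3^2=\mathrm{Sq}^3\iota_3=\mathrm{Sq}^1\mathrm{Sq}^2\iota_3$ is nonzero, because $H^*(K(\Z,3);\F_2)$ is polynomial on $\iota_3$ (which is what your $n=2$ count actually uses), and $\overline{\SP}^{\,2}(S^3)\simeq\Sigma^4\mathbb RP^2$, not $\Sigma^3\mathbb RP^2$ (the latter corresponds to $\BB_2(S^2)$ after desuspending), which is why its mod-two homology sits in degrees $5,6$.
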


\begin{proof}
The starting point is again \eqref{bnsigma+}.
As already mentioned, the Steenrod splitting identifies the cohomology of the reduced symmetric
product with the part of exact filtration \(n\) in the cohomology of the
infinite symmetric product.  The \(N\) copies of \(S^2\) contribute
polynomial generators \(u_1,\ldots,u_N\), each of bidegree
\((2,1)\), where the second entry is the symmetric-product filtration degree.
For the \(S^3\)-summand, it is well-known that
$\SP^\infty (S^3)$ is a model for a $K(\mathbb Z,3)$ of which mod $2$ cohomology is given below
\[
H^*(\SP^\infty(S^3);\mathbb F_2)
\cong
\mathbb F_2[z_0,z_1,z_2,\ldots]
\]
Here \(z_0=\iota_3\) is the dual class to the embedded sphere $S^3=\SP^1(S^3)\hookrightarrow \SP^\infty(S^3)$, and for \(r\ge1\),
\(z_r=\operatorname{Sq}^{2^r}\cdots
\operatorname{Sq}^{4}\operatorname{Sq}^{2}(\iota_3)\), where the $Sq^i$ are the Steenrod operations \cite{serre}.
The cohomological degree of $z_r$ is $2^{r+1}+1$. The filtration degree of $z_r$ is $2^r$ 
(see \cite[Theorem~11.2 and Example~11.3]{KallelKaroui2011}).
The bivariate Poincar\'e series of the resulting
polynomial algebra is computed as follows
\[
{\mathbb P}_{\SP^\infty (\Sigma\Sigma^+)}(t,x) = 
\frac{1}{(1-xt^2)^N}
\prod_{r\ge0}\frac{1}{1-x^{2^r}t^{2^{r+1}+1}}.
\]
Since $\wt P_{\BB_n(\Si^+)}(t;\mathbb F_2) = \frac{1}{t}[x^n]\mathbb P_{\SP^\infty (\Sigma\Sigma^+)}(t,x)$, we obtain
\eqref{eq:Q-mod-two}.  Extracting the coefficients of \(x\) and \(x^2\)
gives \eqref{eq:Q-mod-two-low-orders}.
\end{proof}

 The next term we need to determine is $R_{b,n}(t)$ in \eqref{eq:surface-Q-general}.  To that end, we recall the general two-component decomposition obtained in \cite{AKN2016}. As in \eqref{convention}, we have that $\BB_0(X)*Y=Y*\BB_0(X) = Y$.

\begin{thm}\label{thm:disjoint-two-components}
Let \(A\) and \(B\) be two non-empty spaces of the homotopy type of a finite CW-complex. For \(n\ge2\), and with field coefficients,
\begin{align*}
\wt H_*\bigl(\BB_n(A\sqcup B)\bigr)
\cong {}&
\bigoplus_{X\in\{A,B\}}
\left[
\wt H_*\bigl(\BB_n(X)\bigr)
\oplus
\wt H_{*-1}\bigl(\BB_{n-1}(X)\bigr)
\right]
\notag\\
&\oplus
\bigoplus_{\substack{p,q\ge1\\ p+q=n}}
\wt H_*\bigl(\BB_p(A)*\BB_q(B)\bigr)
\notag\\
&\oplus
\bigoplus_{\substack{p,q\ge1\\ p+q=n-1}}
\wt H_{*-1}\bigl(\BB_p(A)*\BB_q(B)\bigr).
\end{align*}
\end{thm}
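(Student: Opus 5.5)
The plan is to stratify $\BB_n(A\sqcup B)$ exactly as $\BB_l^{\pa}(M)$ was stratified: by the number of support points in $A$ and in $B$. For $p+q\le n$, let
\[
\BB_{p,q}:=\{\mu\in \BB_n(A\sqcup B): |\supp\mu\cap A|\le p,\ |\supp\mu\cap B|\le q\}.
\]
Since $A$ and $B$ are disjoint, open and closed, a configuration splits uniquely as $s\mu_A+(1-s)\mu_B$. Hence, for $p,q\ge1$, the join map
\[
\BB_p(A)*\BB_q(B)\longrightarrow \BB_{p,q}
\]
is a homeomorphism, not merely a quotient as in Proposition~\ref{connectivityBpq}. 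With the augmented convention, $\BB_{n,0}=\BB_n(A)$ and $\BB_{0,n}=\BB_n(B)$. So $\BB_n(A\sqcup B)$ is the colimit of a triangular diagram. Its top row is $\BB_{n-j,j}$ for $0\le j\le n$, and its pairwise intersections are $\BB_{\min,\min}$. This is the same meet-semilattice shape as $\mathcal D_{2l}$, but with both coordinates symmetric, and it is Reedy cofibrant by the same argument as Lemma~\ref{lem:compatible-triangulations}, applied to finite CW models of $A$ and $B$.

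Next I filter by total support size, i.e. the sequence $\BB_{n-1}(A\sqcup B)\subset\BB_n(A\sqcup B)$. First, the pair is NDR (argue as in Lemma~\ref{ndrpair}). Second, $\BB_{n-1}(A\sqcup B)$ is null-homotopic inside $\BB_n(A\sqcup B)$ via $H(s,\sigma)=s\delta_{a_0}+(1-s)\sigma$ with $a_0\in A$. Therefore, as in Lemma~\ref{lem:contractible-in-next},
\[
\BB_n(A\sqcup B)/\BB_{n-1}(A\sqcup B)\simeq \BB_n(A\sqcup B)\vee\Sigma\BB_{n-1}(A\sqcup B).
\]
On the other hand, the quotient is a wedge over top strata $p+q=n$, because distinct top strata meet only in lower filtration. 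For $p,q\ge1$ the summand is
\[
\frac{\BB_p(A)*\BB_q(B)}{\BB_{p-1}(A)*\BB_q(B)\cup \BB_p(A)*\BB_{q-1}(B)}\simeq \frac{\BB_p(A)}{\BB_{p-1}(A)}*\frac{\BB_q(B)}{\BB_{q-1}(B)}
\]
by \eqref{eq:join-quotient-two-sided}. For the endpoint strata the summands are $\BB_n(A)/\BB_{n-1}(A)$ and $\BB_n(B)/\BB_{n-1}(B)$. Now I expand each factor using \eqref{eq:bar-B-def-general}, where $\BB_k(X)/\BB_{k-1}(X)\simeq\BB_k(X)\vee\Sigma\BB_{k-1}(X)$, together with distributivity \eqref{joinwedge}. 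Each mixed summand then becomes a wedge of four pieces:
\[
\BB_p(A)*\BB_q(B),\quad \Sigma\,\BB_{p-1}(A)*\BB_q(B),\quad \Sigma\,\BB_p(A)*\BB_{q-1}(B),\quad \Sigma^2\,\BB_{p-1}(A)*\BB_{q-1}(B).
\]
When $p=1$ the factor $\BB_1(A)/\BB_0(A)$ is interpreted as $A_+$-type, i.e. as $A\vee S^0$ under the $S^{-1}$ convention, and the same for $q=1$.

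The last step is an induction on $n$ in reduced homology with field coefficients, cancelling the known summand $\wt H_{*-1}(\BB_{n-1}(A\sqcup B))$ degreewise, exactly as in the proof of Theorem~\ref{thm:intro-general-homology}. The inductive hypothesis expresses $\wt H_{*-1}(\BB_{n-1}(A\sqcup B))$ as the shifted terms indexed by $p+q=n-1$, together with the terms of the form $\wt H_{*-2}(\BB_{n-2})$. These must match precisely the suspended pieces produced above. What remains after cancellation is the unsuspended terms plus one shifted layer, which is the claimed formula.

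I expect the main obstacle to be this bookkeeping at the boundary cases $p=1$ or $q=1$. There the $S^{-1}$ convention converts $\BB_0(A)*\BB_q(B)$ into $\BB_q(B)$, and this term has to be absorbed into the one-component pieces $\wt H_{*-1}(\BB_{n-1}(X))$ of the formula. I would verify the cancellation first for $n=2$, where $\BB_2(A\sqcup B)$ is explicit: it is $\BB_2(A)\cup\BB_2(B)\cup A*B$. I would then check that the two-step telescoping of suspensions closes up consistently for general $n$.
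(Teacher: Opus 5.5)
Your argument is correct, and it takes a different route from the paper, which gives no independent proof: it cites \cite[Theorem~5.19]{AKN2016} and asserts that the connectedness hypothesis there is never used. You instead rederive the decomposition with the machinery of Section~\ref{sec:general-homology}. The clopen splitting makes $\BB_p(A)*\BB_q(B)\to\BB_{p,q}$ a homeomorphism. The support-size filtration then has quotient $\bigvee_{p+q=n}\bigl(\BB_p(A)/\BB_{p-1}(A)\bigr)*\bigl(\BB_q(B)/\BB_{q-1}(B)\bigr)$, and degreewise cancellation finishes.

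The bookkeeping you flag does close up. Set $E_k:=\bigoplus_{p+q=k,\ p,q\ge0}\wt H_*(\BB_p(A)*\BB_q(B))$ under the $S^{-1}$ conventions, so $E_0=\F$ in degree $-1$, and let $\Sigma$ denote the degree shift. Then the quotient has homology $E_n\oplus(\Sigma E_{n-1})^{\oplus 2}\oplus\Sigma^2E_{n-2}$, and the claimed answer is $E_n\oplus\Sigma E_{n-1}$. This form holds even at $n=1$, because $\wt H_*(A\sqcup B)=E_1\oplus\Sigma E_0$, so the induction may start there. The theorem's $\bigoplus_X$ form fails at $n=1$ only because it counts $\Sigma\wt H_*(\BB_0)$ twice.

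The one step you should justify rather than assert is the case $p=1$ or $q=1$ of \eqref{eq:join-quotient-two-sided}, which the paper states only for nonempty subspaces. It follows from $X_+*Z=(X*Z)\cup_Z CZ\simeq (X*Z)/Z$ together with \eqref{eq:join-quotient-one-sided}.

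What your route buys is a visible proof of the extension to disconnected $A$, $B$. That is exactly what the recursion for $R_{b,n}$ with $A=\sqcup_{b-1}S^1$ needs, and it holds because adding $\delta_{a_0}$, the identification $\BB_1(X)/\BB_0(X)=X_+$, and the join K\"unneth formula never use path-connectedness. The citation is shorter, but it asks the reader to take that claim about the cited proof on trust.
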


\begin{proof} This is a restatement, after reindexing, of the decomposition obtained in \cite[Theorem~5.19]{AKN2016}. In that reference, the theorem is stated for connected $A$ and $B$, but its proof filters by the clopen partition of the support between \(A\) and \(B\) and then uses only cofibration and pushout identifications; no step uses paths within either component. Thus nonemptiness and the CW-type hypotheses suffice. The restriction \(n\ge2\) is retained because the augmented convention \(\BB_0=S^{-1}\) used in that decomposition does not encode the extra reduced \(H_0\)-class of a disjoint union at \(n=1\).
\end{proof}
 
\begin{exa}\rm The order \(n=1\) is handled separately:
\[
\widetilde H_*(\BB_1(A\sqcup B))
=\widetilde H_*(A\sqcup B)
\cong\widetilde H_*(A)\oplus\widetilde H_*(B)\oplus\F[0],
\]
where \(\F[0]\) is the one-dimensional class detecting the two connected
components. For \(n=2\), Theorem \ref{thm:disjoint-two-components} gives
    \[
\begin{aligned}
\widetilde H_*\bigl(\BB_2(A\sqcup B)\bigr)
\cong {}
\widetilde H_*\bigl(\BB_2(A)\bigr)
\oplus
\widetilde H_{*-1}(A)
\oplus
\widetilde H_*\bigl(\BB_2(B)\bigr)
\oplus
\widetilde H_{*-1}(B)
\oplus
\widetilde H_*\bigl(A*B\bigr).
\end{aligned}
\]
\end{exa}

Theorem \ref{thm:disjoint-two-components} is now used to derive the following closed formula.

\begin{prop}\label{prop:Rbn-formula}
For \(b\ge1\) and \(n\ge1\), set
$\displaystyle
R_{b,n}(t):=
\wt P_{\BB_n(\sqcup_{a=1}^{b}S^1)}(t).
$
Then
\begin{equation}\label{eq:Rbn-explicit}
R_{b,n}(t)=
\sum_{c=0}^{\min\{b-1,n\}}
\binom{b-1}{c}\binom{b+n-c-1}{n-c}t^{2n-c-1}.
\end{equation}
\end{prop}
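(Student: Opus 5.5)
The plan is to encode the Poincaré polynomials of all orders in a single generating series, in which the augmented join convention \eqref{convention} turns Theorem \ref{thm:disjoint-two-components} into a multiplicative identity. For a nonempty finite CW complex \(X\), set
\[
f_X(x)=t^{-1}+\sum_{n\ge1}\wt P_{\BB_n(X)}(t)\,x^n,
\]
so the constant term \(t^{-1}\) is the Poincaré polynomial of the formal sphere \(\BB_0(X)=S^{-1}\). With field coefficients, \eqref{eq:join-homology} gives \(\wt P_{X*Y}=t\,\wt P_X\wt P_Y\), and this remains consistent when one factor is \(S^{-1}\), since \(t\cdot t^{-1}\wt P_Y=\wt P_Y\).

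The key step is to prove, for nonempty \(A\) and \(B\),
\[
f_{A\sqcup B}=t(1+xt)\,f_Af_B .
\]
First, for \(n\ge2\), expand the coefficient of \(x^n\) on the right-hand side.
\begin{itemize}
\item The term \(t f_Af_B\) contributes \(\sum_{p+q=n,\,p,q\ge0} t\,\wt P_{\BB_p(A)}\wt P_{\BB_q(B)}\). The summands with \(q=0\) or \(p=0\) give exactly \(\wt P_{\BB_n(A)}+\wt P_{\BB_n(B)}\), and the rest give the mixed join terms with \(p+q=n\).
\item The term \(xt\cdot t f_Af_B\) similarly produces \(t\,\wt P_{\BB_{n-1}(A)}+t\,\wt P_{\BB_{n-1}(B)}\), together with the suspended mixed terms with \(p+q=n-1\). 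For \(n=2\) this uses \(\wt P_{\BB_1(A)}=\wt P_A\), matching the example after Theorem \ref{thm:disjoint-two-components}.
\end{itemize}
These are precisely the summands of Theorem \ref{thm:disjoint-two-components}.

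The step I expect to need the most care is the low orders, where that theorem does not apply.
\begin{itemize}
\item At \(x^0\), the right-hand side gives \(t\cdot t^{-1}\cdot t^{-1}=t^{-1}\), as required.
\item At \(x^1\), it gives \(\wt P_A+\wt P_B+xt\cdot t\cdot t^{-2}\big|_{x}=\wt P_A+\wt P_B+1\). This is exactly the \(n=1\) formula \(\wt H_*(A\sqcup B)\cong\wt H_*(A)\oplus\wt H_*(B)\oplus\F[0]\) recorded in the example.
\end{itemize}
So the identity holds in every order.

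Next, iterate the identity, decomposing \(\sqcup_{a=1}^bS^1=(\sqcup_{a=1}^{b-1}S^1)\sqcup S^1\) and inducting on \(b\). This gives
\[
f_{\sqcup^bS^1}=t^{b-1}(1+xt)^{b-1}f_{S^1}^{\,b}.
\]
Since \(\BB_n(S^1)\simeq S^{2n-1}\), we have \(f_{S^1}=t^{-1}\sum_{n\ge0}(xt^2)^n=t^{-1}(1-xt^2)^{-1}\). Therefore
\[
f_{\sqcup^bS^1}=t^{-1}(1+xt)^{b-1}(1-xt^2)^{-b}.
\]

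Finally, extract the coefficient of \(x^n\) for \(n\ge1\). Expand \((1+xt)^{b-1}=\sum_c\binom{b-1}{c}x^ct^c\), and use the negative binomial series \((1-xt^2)^{-b}=\sum_m\binom{b+m-1}{m}x^mt^{2m}\). Setting \(m=n-c\), the coefficient of \(x^n\) is
\[
\sum_{c=0}^{\min\{b-1,n\}}\binom{b-1}{c}\binom{b+n-c-1}{n-c}t^{-1+c+2(n-c)}.
\]
The exponent \(-1+c+2(n-c)\) equals \(2n-c-1\), which is \eqref{eq:Rbn-explicit}. As a sanity check, for \(b=1\) only \(c=0\) survives and the formula returns \(t^{2n-1}\).
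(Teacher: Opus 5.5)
Your proposal is correct and follows essentially the same route as the paper. Both proofs split off one circle, apply Theorem~\ref{thm:disjoint-two-components} (treating $n=1$ by hand), and pass to a generating series; your $f_{\sqcup^b S^1}$ is exactly $t^{-1}F_b(x,t)$. The only difference is packaging: you prove the general product rule $t f_{A\sqcup B}=(1+tx)\,(tf_A)(tf_B)$ for arbitrary nonempty $A,B$, and the paper's recurrence \eqref{iteration} is its specialization to $B=S^1$. This spares you the index reshuffling the paper needs to fold the $p+q=n-1$ terms into its coefficient recurrence.
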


\begin{proof}
For \(n=1\), direct reduced homology gives
\[
R_{b,1}(t)=(b-1)+bt,
\]
which is \eqref{eq:Rbn-explicit}. The case \(b=1\) is
\[
R_{1,n}(t)=\wt P_{\BB_n(S^1)}(t)=\wt P_{S^{2n-1}}(t)=t^{2n-1}
\]
over every field. Now let \(b\ge2\) and \(n\ge2\), suppose the formula is
known for \(b-1\) components, and write
\(X_{b-1}=\sqcup_{a=1}^{b-1}S^1\). Apply Theorem
\ref{thm:disjoint-two-components} to \(X_{b-1}\sqcup S^1\):
\begin{itemize}
    \item Use the identity
$\displaystyle\wt P_{X*Y}(t)=t\wt P_X(t)\wt P_Y(t)$. 
\item Summing the terms in Theorem \ref{thm:disjoint-two-components} yields
\[
R_{b,n}(t)
=
R_{b-1,n}(t)
+t^{2n-1}+t^{2n-2}
+
\sum_{\substack{p,q\geq1\\p+q=n}}
\bigl(t^{2q}+t^{2q-1}\bigr)R_{b-1,p}(t).
\]
\item Write the generating series
\[
F_b(x,t)
:=
1+t\sum_{n\geq1}R_{b,n}(t)x^n.
\]
Together with \(R_{b,1}=R_{b-1,1}+1+t\), the recurrence in the previous
bullet gives
\begin{equation}\label{iteration}
F_b(x,t)
=
\frac{1+tx}{1-t^2x}\,F_{b-1}(x,t).
\end{equation}
\item For \(b=1\),
$R_{1,n}(t)=t^{2n-1}$, 
and therefore
$$ 
F_1(x,t)
=
1+t\sum_{n\geq1}t^{2n-1}x^n
=
1+\sum_{n\geq1}(t^2x)^n
=
\frac{1}{1-t^2x}.
$$
\item Iterating \eqref{iteration} gives the closed formula for the generating series
$\displaystyle 
F_b(x,t)
=
\frac{(1+tx)^{b-1}}{(1-t^2x)^b}.
$
\item The coefficient of $x^n$ in the generating series is $tR_{b,n}(t)$.  This is precisely \eqref{eq:Rbn-explicit} and is extracted from the negative binomial expansion
\[
(1-t^2x)^{-b}
=
\sum_{m\geq0}
\binom{b+m-1}{m}t^{2m}x^m,
\]
and the standard combinatorial formula
$\displaystyle 
(1+tx)^{b-1}
=
\sum_{c=0}^{b-1}
\binom{b-1}{c}t^c x^c
$. 
\end{itemize}
\end{proof}

\begin{rem}
    \rm The proof shows that $R_{b,n}(t)$ is independent of the coefficient field. This is not the case for $Q_{N,n}(t)$.
\end{rem}

Combining Corollary \ref{poincarepolynomial}, Proposition
\ref{prop:Rbn-formula}, and Lemma \ref{lem:Q-mod-two} gives the coefficient
version needed below:
\begin{equation}\label{eq:surface-even-mod-two}
\wt P_{\BB_{2s}^{\pa}(\Si)}(t;\mathbb F_2)
=
R_{b,2s}(t)+Q^{(2)}_{N,s}(t)
+t\sum_{i=1}^{s-1}Q^{(2)}_{N,i}(t)R_{b,2s-2i}(t),
\end{equation}
and
\begin{equation}\label{eq:surface-odd-mod-two}
\wt P_{\BB_{2s-1}^{\pa}(\Si)}(t;\mathbb F_2)
=
R_{b,2s-1}(t)
+t\sum_{i=1}^{s-1}Q^{(2)}_{N,i}(t)R_{b,2s-2i-1}(t).
\end{equation}

\begin{exa}\label{exa:disk}\rm We discuss simple cases of $\BB_l^\pa (\Sigma)$.
\begin{itemize}
\item For \(n=1\), it gives
\begin{equation}\label{eq:Rb1}
R_{b,1}(t)=(b-1)+bt,
\end{equation}
which records correctly that \(\BB_1(\sqcup_bS^1)\) is the disjoint union of \(b\) circles.
\item For the disk \(D^2\), $\mfg =0$ and \(b=1\), \(N=0\).  Thus \(R_{1,n}(t)=t^{2n-1}\), \(Q_{0,1}=t^2\), and \(Q_{0,i}=0\) for \(i\ge2\).  Hence
\[
\wt P_{\BB_2^{\pa}(D^2)}(t;\Q)=t^3+t^2,
\]
which agrees with the direct computation in Remark \ref{rem:not-homotopy-invariant}.  For \(l\ge2\), the closed disk formulas are
\[
\wt P_{\BB_{2l}^{\pa}(D^2)}(t;\Q)=t^{4l-1}+t^{4l-2},
\qquad
\wt P_{\BB_{2l-1}^{\pa}(D^2)}(t;\Q)=t^{4l-3}+t^{4l-4}.
\]
\item 
For an annulus, \(\mathfrak g=0\), \(b=2\), hence \(N=1\).  Formula \eqref{eq:Q-rational-surface-expanded} gives
\[
Q_{1,n}(t)=t^{2n-1}+t^{2n}.
\]
Together with \(R_{2,n}(t)=(n+1)t^{2n-1}+n t^{2n-2}\), this gives a completely explicit rational Betti polynomial for every \(\BB_l^{\pa}\)(annulus).
\end{itemize}
\end{exa}

\begin{rem}\rm
    It is a fun exercise in combinatorics to reproduce the following formulas for the Euler characteristic given in Theorem \ref{thm:intro-euler}
\[
\chi\bigl(\BB^\partial_{2l}(\Sigma)\bigr)
=
1-\binom{N+l-1}{l},
\qquad
\chi\bigl(\BB^\partial_{2l-1}(\Sigma)\bigr)
=
1-\binom{N+l-2}{l-1}.
\]
The combinatorics are based on the Lemmas above that enter into the expression of $\wt P_{\BB_{2l}^{\pa}(\Si)}(t;\Q)$ and
$\wt P_{\BB_{2l-1}^{\pa}(\Si)}(t;\Q)$ in Corollary \ref{surface-case} (i.e. substitute \(t=-1\), $N= 2\mathfrak g + b-1$ and
$\chi (\Sigma )=2-2\mathfrak g-b=1-N$ in all formulas).
\end{rem}


\section{Boundary-weighted barycenters and the mean-field equation on surfaces with boundary}
\label{sec:variational-jump}

We specialize the topological computation to the resonant Neumann mean-field
equation in the form proved by Ahmedou--Hu--Wang-Zhang \cite{AHW2026}. Let
\((\Si,g)\) be a compact connected orientable Riemannian surface with
nonempty smooth boundary and let \(V\in C^4(\Si)\) be positive. On
\[
\mathcal H:=\left\{u\in H^1(\Si):\int_\Si u\,dv_g=0\right\},
\]
endowed with the Dirichlet norm
\(\|u\|_{\mathcal H}^2:=\int_\Si |\nabla_g u|^2\,dv_g\),
set
\begin{equation}\label{eq:Neumann-MF-functional}
J_\rho(u):=\frac12\int_\Si |\nabla_g u|^2\,dv_g
-\rho\log\int_\Si V e^u\,dv_g .
\end{equation}
Its critical points solve
\begin{equation}\label{eq:Neumann-MF-equation}
\begin{cases}
-\Delta_g u=\rho\left(\dfrac{V e^u}{\int_\Si V e^u\,dv_g}
-\dfrac1{|\Si|_g}\right)&\text{in }\Si,\\[1.1em]
\partial_{\nu_g}u=0&\text{on }\pa\Si,\\[0.3em]
\displaystyle\int_\Si u\,dv_g=0.
\end{cases}
\end{equation}
An interior bubble has mass \(8\pi\), while a boundary bubble has mass
\(4\pi\). We therefore fix
\begin{equation}\label{eq:rho-kappa-resonance}
\rho=4\pi\kappa,\qquad \kappa\ge2,
\end{equation}
so a mixed configuration of type \((p,q)\) has weighted cost
$
2p+q.
$
The order \(\kappa=1\) is excluded here: \(J_{4\pi}\) is bounded below, so
the nonempty very negative sublevel used below is not available.

We first state precisely the finite-dimensional assumptions under which the
analytic topology theorem is used. For \(2p+q=\kappa\), let
\[
\Xi_{p,q}:=
\left\{(a_1,\ldots,a_p,b_1,\ldots,b_q):
a_i\in\mathring\Si,\ b_j\in\pa\Si,\ \text{all points distinct}\right\},
\]
and let
\(\mathfrak C_{p,q}(\Si)=\Xi_{p,q}/(\mathfrak S_p\times\mathfrak S_q)\).
The reduced Kirchhoff--Routh functional
\(\mathcal F_{p,q}^V\) and the common-scale coefficient are those of
\cite[(1.3)--(1.6)]{AHW2026}. In particular,
\begin{equation}\label{eq:L-piecewise}
\mathcal L_{p,q}^V(\xi)=
\begin{cases}
\mathcal L_1^V(\xi),&q>0,\\
\mathcal L_2^V(\xi),&q=0,
\end{cases}
\end{equation}
where \(\mathcal L_1^V\) is the mixed/pure-boundary coefficient of order
\(\lambda^{-1}\), whereas \(\mathcal L_2^V\) is the pure-interior
coefficient of order \(\lambda^{-2}\ln\lambda\).
For every type \(2p+q=\kappa\), assume:
\begin{enumerate}[(C1)]
\item \(\mathcal F_{p,q}^V\) is Morse on \(\Xi_{p,q}\) and has only
finitely many critical orbits under
\(\mathfrak S_p\times\mathfrak S_q\);
\item \(\mathcal L_{p,q}^V(\xi)\ne0\) at every critical point
\(\xi\) of \(\mathcal F_{p,q}^V\).
\end{enumerate}
These are exactly conditions (C1)--(C2) of \cite{AHW2026}; in particular,
they are imposed on all reduced critical configurations, not only on the
escaping ones.

For \(a\in\R\), write
\(J_\rho^a:=\{u\in\mathcal H:J_\rho(u)\le a\}\). Under (C1)--(C2),
Ahmedou--Hu--Wang prove that, for all sufficiently large \(A\),
\begin{equation}\label{eq:MF-low-sublevel-model}
J_{4\pi\kappa}^{-A}\simeq \BB_{\kappa-1}^{\pa}(\Si).
\end{equation}
The resonance-specific step in this assertion is essential. Choose
\(\tau>0\) so that
\[
4\pi(\kappa-1)<\frac{4\pi\kappa}{1+\tau}<4\pi\kappa
\]
and, for \(L\gg1\), set
\[
U_0(L):=J_{4\pi\kappa}^{-4L},\qquad
U_1(L):=
\left\{J_{4\pi\kappa}(u)+\frac{\tau}{2}\|u\|_{\mathcal H}^2
\le-4L\right\}.
\]
The parameterized Bahri--Lucia deformation retracts \(U_0(L)\) onto
\(U_1(L)\) without crossing a genuine critical value or a critical value at
infinity. On the double of \(\Si\), the concentration projection and the
bubble test map are constructed equivariantly. The fixed barycenters under
reflection are precisely the configurations in
\(\BB_{\kappa-1}^{\pa}(\Si)\): an interior point occurs with its reflected
partner and therefore has cost two, while a boundary point is fixed and has
cost one. Restricting the equivariant homotopies to the fixed set yields
\eqref{eq:MF-low-sublevel-model}; see
\cite[Lemma~4.2 and Proposition~4.2]{AHW2026}. Thus the appearance of
\(\kappa-1\) is a consequence of the reinforced endpoint construction, not
of a direct nonresonant projection argument.

After increasing \(A\), the upper sublevel \(J_{4\pi\kappa}^{A}\) is
contractible. Ahmedou--Hu--Wang formulate the resulting Morse theory over
\(\mathbb F_2\); hence, for \(r\ge1\),
\begin{equation}\label{eq:MF-relative-pair}
H_r(J_{4\pi\kappa}^{A},J_{4\pi\kappa}^{-A};\mathbb F_2)
\cong
\wt H_{r-1}(\BB_{\kappa-1}^{\pa}(\Si);\mathbb F_2),
\end{equation}
and the relative \(H_0\)-group vanishes. Define
\begin{equation}\label{eq:MF-pair-polynomial}
P_\kappa(t):=
\sum_{r\ge0}\dim_{\mathbb F_2}
H_r(J_{4\pi\kappa}^{A},J_{4\pi\kappa}^{-A};\mathbb F_2)t^r
=t\,\wt P_{\BB_{\kappa-1}^{\pa}(\Si)}(t;\mathbb F_2).
\end{equation}
Equations \eqref{eq:surface-even-mod-two}--\eqref{eq:surface-odd-mod-two},
together with the filtered coefficient formula \eqref{eq:Q-mod-two} and
\eqref{eq:Rbn-explicit}, give every coefficient of \(P_\kappa(t)\)
explicitly. The rational formulas in Section
\ref{sec:surface-specialization} remain valid rational computations, but
they are not substituted into this mod-two Morse relation.

Set
\[
\widetilde c_j^{\kappa-1}:=
\dim_{\mathbb F_2}\widetilde H_j
\bigl(\BB_{\kappa-1}^{\pa}(\Si);\mathbb F_2\bigr).
\]
Then
\begin{equation}\label{eq:MF-pair-coefficients}
P_\kappa(t)=\sum_{r\ge1}\widetilde c_{r-1}^{\kappa-1}t^r.
\end{equation}

The sign convention in \cite{AHW2026} is that the decreasing pseudogradient
drives the common scale to \(+\infty\) exactly when
\(\mathcal L_{p,q}^V(\xi)<0\). Accordingly, the critical points at infinity
of type \((p,q)\) are precisely
\begin{equation}\label{eq:Vminus-pq-def}
\mathcal V_{p,q}^-:=
\left\{[\xi]\in\mathfrak C_{p,q}(\Si):
\nabla\mathcal F_{p,q}^V(\xi)=0,\ \mathcal L_{p,q}^V(\xi)<0\right\}.
\end{equation}
Thus the whole cost-\(\kappa\) configuration space is only the potential end
stratum; the points in \(\mathcal V_{p,q}^-\) are the actual escaping ends.
Their index at infinity is
\begin{equation}\label{eq:MF-index-at-infinity}
\iota_\infty(\xi)=
3p+2q-1-\operatorname{morse}(\mathcal F_{p,q}^V,\xi).
\end{equation}
Indeed, \(2p+q\) location variables and \(p+q-1\) relative weight/scale
variables give the barycentric dimension \(3p+2q-1\).

Assume in addition that all critical points of \(J_{4\pi\kappa}\) are
nondegenerate. The compactness theorem at the resonant level then implies
that \(\operatorname{Crit}(J_{4\pi\kappa})\) is finite. The strong Morse relation of
\cite[Theorem~1.3]{AHW2026} is
\begin{equation}\label{eq:MF-strong-Morse-final}
\sum_{u\in\operatorname{Crit}(J_{4\pi\kappa})}
t^{\operatorname{morse}(J_{4\pi\kappa},u)}
+
\sum_{2p+q=\kappa}\ \sum_{\xi\in\mathcal V_{p,q}^-}
t^{3p+2q-1-\operatorname{morse}(\mathcal F_{p,q}^V,\xi)}
=P_\kappa(t)+(1+t)\mathcal Q(t),
\end{equation}
where \(\mathcal Q(t)\) has nonnegative integer coefficients. If
\[
\nu_i:=\#\{u\in\operatorname{Crit}(J_{4\pi\kappa}):
\operatorname{morse}(J_{4\pi\kappa},u)=i\}
\]
and
\[
m_i^\infty:=
\#\left\{\xi\in\bigcup_{2p+q=\kappa}\mathcal V_{p,q}^-:
\iota_\infty(\xi)=i\right\},
\]
then equivalently
\begin{equation}\label{eq:MF-coefficient-Morse-relation}
\sum_i(\nu_i+m_i^\infty)t^i
=\sum_{r\ge1}\widetilde c_{r-1}^{\kappa-1}t^r
+(1+t)\mathcal Q(t).
\end{equation}
Evaluating at \(t=-1\) gives
\begin{equation}\label{eq:MF-euler-hopf-final}
\sum_j(-1)^j\nu_j+
\sum_{2p+q=\kappa}\ \sum_{\xi\in\mathcal V_{p,q}^-}
(-1)^{3p+2q-1-\operatorname{morse}(\mathcal F_{p,q}^V,\xi)}
=1-\chi(\BB_{\kappa-1}^{\pa}(\Si)).
\end{equation}
Equations \eqref{eq:MF-strong-Morse-final} and
\eqref{eq:MF-euler-hopf-final} are therefore coefficient-consistent,
respectively mod-two and Euler-characteristic forms of the same topology
jump.

\begingroup
\footnotesize
\medskip
\noindent
\textsc{Mohameden Ahmedou}\par
Mathematisches Institut, Justus-Liebig-Universit\"at Giessen,
Arndtstrasse 2, 35392 Giessen, Germany.\par
\texttt{Mohameden.Ahmedou@math.uni-giessen.de}

\smallskip
\noindent
\textsc{Sadok Kallel}\par
American University of Sharjah, United Arab Emirates, and
Laboratoire Paul Painlev\'e, Universit\'e de Lille, France.\par
\texttt{sadok.kallel@univ-lille.fr}
\endgroup

\end{document}